\documentclass{amsart}
\usepackage{amscd}
\usepackage{amsfonts}
\usepackage{amsmath}
\usepackage{amssymb}
\usepackage{latexsym}
\usepackage{amsthm,color}
\usepackage[all]{xy}
\usepackage{epsfig}
\usepackage{graphicx}
\usepackage{color}
\newtheorem{proposition}{Proposition}
\newtheorem{lemma}{Lemma}[section]
\newtheorem{sublemma}{Sublemma}[section]
\newtheorem{theorem}{Theorem}

\newtheorem{fact}{Fact}

\theoremstyle{definition}
\newtheorem{definition}{Definition}
\theoremstyle{definition}
\newtheorem{example}{Example}
\theoremstyle{definition}
\newtheorem{remark}{Remark}
\newtheorem{note}{Note}

\newtheorem{problem}{Problem}

\newcommand{\R}{\mathbb{R}}

\newcommand{\mb}{\mathbb}
\DeclareMathOperator{\Reg}{Reg}

\usepackage{todonotes}

\newcommand{\ch}[1]{{\color{black} #1}}

\graphicspath{{figures/}}

\begin{document}
\title[
Recovery problem of parametrizations from Legendre data 
]
{Recovery problem of parametrizations from Legendre data  
}
\author[C.~Mu\~{n}oz-Cabello]{C.~Mu\~{n}oz-Cabello}
\address{C.~Mu\~{n}oz-Cabello \\ 
Departament de Matem\`{a}tiques, Universitat de Val\`{e}ncia, Campus de
Burjassot, 46100 Burjassot, Spain}
\email{Christian.Munoz@uv.es}
\author[T.~Nishimura]{T.~Nishimura
}
\address{T.~Nishimura \\
Research Institute of Environment and Information Sciences,
Yokohama National University,
Yokohama 240-8501, Japan}
\email{nishimura-takashi-yx@ynu.ac.jp}
\author[R.~Oset Sinha]{R.~Oset Sinha}
\address{R.~Oset Sinha \\
Departament de Matem\`{a}tiques, Universitat de Val\`{e}ncia, Campus de
Burjassot, 46100 Burjassot, Spain}
\email{Raul.Oset@uv.es}
\begin{abstract}
The problem of recovery 
of parametrizations from Legendre data is a very important inverse problem.    
In this paper,  
we provide a systematic and widely-applicable method to 
recover parametrizations 
$\ch{f}: {\color{black}U_n}\to \mathbb{R}^{n+1}$ from Legendre data 
where $U_n$ is an open subset of $\mathbb R^n$.  
Namely, for a dense subset of {\color{black}the space of real-analytic
parametrizations 
from $U_n$ into $\mathbb{R}^{n+1}$}, 
 we show how to recover 
the parametrization from the Gauss mapping and the height function.     
{\color{black} 
Moreover, in order to assist readers to apply results of this paper, 
many concrete examples are given.  
} 
\end{abstract}
\subjclass{57R45, 58C25} 
\keywords{Recovery problem, Envelope, Gauss mapping, Height function, 
Frontal,
Regular frontal, {\color{black}P}seudo regular frontal.}


\date{}

\maketitle
\section{Introduction}\label{section1}
Throughout this paper, unless otherwise stated, 
$n$ is a positive integer 
and $U_n$ is an open subset 
of $\mathbb{R}^n$.   Moreover, all functions and 
mappings are {\color{black}real-analytic}. 

Recovery problems are important inverse problems since the  
recovery of unprocessed raw data from processed data often gives a 
 breakthrough in Science and Engineering.     
\par 
The Legendre transform is one example of such a recovery.   
A brief explanation of the Legendre transform is as follows.   
For instance, let $f: \mathbb{R}\to \mathbb{R}$ be the function defined by 
$f(x)=x^2$ and consider $f$ to be the unprocessed data.     
Draw the graph of $f$ in the $(X, Y)$-plane.    
The tangent line to the graph of $f$ at $x=x_0$ is defined by 
\begin{eqnarray*}
Y & = & \frac{df}{dx}\left(x_0\right)\left(X-x_0\right)+x^2_0 \\ 
{ } & = & 2x_0X-x^2_0.   
\end{eqnarray*}
Set $p{\color{black}=\frac{df}{dx}(x_0)}=2x_0$ and 
$q= {\color{black}-\left(-x_0^2\right)=x^2_0}$.    
Then, we can obtain the new function 
\[
q(p)= {\color{black}\left(\frac{p}{2}\right)^2}.   
\]
Namely, the $Y$-intercept ${\color{black}-q}$ 
of the tangent line to the graph of $f$ 
is a function of the slope $p$ of the tangent line to the graph of $f$.   
The function $q(p)$ is a processed data of $f(x)$ and the process 
$\mathcal{L}(f{\color{black}(x)})= q{\color{black}(p)}$ 
is called the \textit{Legendre transform}.    
The recovery problem in this case is to obtain the original raw data $f(x)$ 
from the processed data $q(p)$. It is well-known that the same process gives 
the perfect solution of this recovery problem, that is, 
$\mathcal{L}(q{\color{black}(p)})=f{\color{black}(x)}$.     
Nowadays, Legendre transform has been widely extended 
and has important applications in Mathmatical Physics, 
Thermodynamics, Convex analysis etc.      
For more details on the Legendre Transform, see for instance 
{\color{black}\cite{arnoldmechanics, hormander, rockafellar}}.    
\par 
On the other hand,  consider for example the function 
$g: \mathbb{R}\to \mathbb{R}$ defined by $g(x)=x^3$.    
Draw the graph of $g$ in the $(X, Y)$-plane.    
The tangent line to the graph of $g$ at $x=x_0$ is defined by 
\begin{eqnarray*}
Y & = & \frac{dg}{dx}\left(x_0\right)\left(X-x_0\right)+x^3_0 \\ 
{ } & = & 3x_0^2X-2x^3_0.   
\end{eqnarray*}      
If we set $p=\frac{dg}{dx}\left(x_0\right)=3x_0^2$, the $Y$-inter\ch{c}ept 
${\color{black}-q}=-2x_0^3$ is not a function of $p$.    
Thus, it is impossible to apply the Legendre transform to obtain the original 
unprocessed data $g(x)=x^3$ from the processed data 
$\left\{(p(x), q(x))\right\}_{x\in \mathbb{R}}=
\left\{\left(3x^2, {\color{black}2x^3}\right)\right\}_{x\in \mathbb{R}}$.     
{\color{black}Nevertheless,} very recently, in \cite{nishimura, nishimura2}, by 
replacing the processed data 
$\left\{(p(x), q(x))\right\}_{x\in \mathbb{R}}=
\left\{\left(3x^2, {\color{black}2x^3}\right)\right\}_{x\in \mathbb{R}}$ 
with the processed data in the sense of Legendre (called \textit{Legendre data}) 
$\left\{\left(\nu(x), a(x)\right)\right\}_{x\in \mathbb{R}}$, 
the second author provides a new method to solve the recovery problem.     
A brief explanation of the new method 
is as follows.   
For the given $g(x)=x^3$, let  
$\nu: \mathbb{R}\to S^1$ be the mapping defined by 
$\nu(x)=\frac{1}{\sqrt{1+9x^4}}\left(-3x^2, 1\right)$.    
Then, $\nu\left(x_0\right)$ is a unit normal vector {\color{black}to} the 
tangent line to the graph of $g(x)=x^3$ at $x=x_0$ and the mapping 
$\nu: \mathbb{R}\to S^1$ is called the \textit{Gauss mapping}.  
Set $a(x)=\left(x, g(x)\right)\cdot \nu(x)=\frac{-2x^3}{\sqrt{1+9x^4}}$, where 
the dot in the center stands for the standard scalar product of 
$2$-dimensional vectors. Then,  $a(x_0)$ is the height of the tangent 
line to the graph of $g(x)=x^3$ at $x=x_0$ relative to the origin $(0,0)$ and 
the function $a: \mathbb{R}\to \mathbb{R}$ is called the 
\textit{height function}.      
Thus, the Legendre data in this case is 
$\left\{\frac{1}{\sqrt{1+9x^4}}\left(-3x^2, 1\right), \; 
\frac{-2x^3}{\sqrt{1+9x^4}}\right\}_{x\in \mathbb{R}}$.    
Next, set $\nu(x)=\left(\cos\theta(x), \sin\theta(x)\right)$.    
It has been shown in \cite{nishimura, nishimura2} that 
$\frac{da}{dx}(x)$ is divided by $\frac{d\theta}{dx}(x)$ and 
by setting $b(x)=\frac{\frac{da}{dx}(x)}{\frac{d\theta}{dx}(x)}$, 
the original unprocessed data $g(x)$ can be actually recovered as follows.   
\[
{\color{black} 
\left(x, g(x)\right)
}
=a(x)\left(\cos\theta(x), \sin\theta(x)\right) + 
b(x)\left(-\sin\theta(x), \cos\theta(x)\right).  
\]  
In the case that $g(x)=x^3$, by elementary calculation, it follows that
$b(x)=\frac{-x-3x^5}{\sqrt{1+9x^4}}$. Thus, as desired, 
we certainly have 
\begin{eqnarray*}
{ } & { } & a(x)\left(\cos\theta(x), \sin\theta(x)\right) + 
b(x)\left(-\sin\theta(x), \cos\theta(x)\right) \\ 
{ } & = & 
\frac{-2x^3}{\sqrt{1+9x^4}}\left(\frac{-3x^2}{\sqrt{1+9x^4}}, 
\frac{1}{\sqrt{1+9x^4}}\right)
+
\frac{-x-3x^5}{\sqrt{1+9x^4}}\left(\frac{-1}{\sqrt{1+9x^4}}, 
\frac{-3x^2}{\sqrt{1+9x^4}}\right) \\ 
{ } & = & 
\left(\frac{6x^5}{{1+9x^4}}, 
\frac{-2x^3}{{1+9x^4}}\right)
+
\left(\frac{x+3x^5}{{1+9x^4}}, 
\frac{3x^3+9x^7}{{1+9x^4}}\right) \\ 
{ } & = & 
\left(\frac{x+9x^5}{1+9x^4}, \frac{x^3+9x^7}{1+9x^4}\right) \\ 
{ } & = & 
\left(x, x^3\right)=
{\color{black} 
\left(x, g(x)\right)
}.      
\end{eqnarray*}
The new method developed in \cite{nishimura, nishimura2} is based on the study 
of anti-orthotomics of frontals 
(\cite{janeczkonishimura}, see also \cite{alamocriado} in which 
the usefulness of anti-orthotomics for the recovery problem 
of wave fronts has been {\color{black}already} clarified).      
Moreover, the new method 
may be regarded as a natural generalization of the outstanding 
Cahn-Hoffman vector formula (see \cite{hoffmancahn}.  
{\color{black}see also 
\cite{hedgehog} in which hedgehogs were defined as natural geometric 
objects whose parametrizations can be de\ch{sc}ribed by using the Cahn-Hoffman 
vector formula}). 
\par 
\smallskip 
In this paper, we {\color{black}sublimate the new method 
in \cite{nishimura, nishimura2} to solve the recovery problem \ch{even} 
in higher dimensional cases.   \ch{As} 
it turns out, \ch{our method solves} 
the recovery problem for 
any {\color{black}real-analytic} 
parametrisation $f\colon U_n \to \mathbb{R}^{n+1}$ 
from Legendre data {\color{black}(Theorem \ref{theorem3})}.    
Thus, our sublimation 
is much \ch{stronger than expected}.}   
{\color{black}More precisely, we first define the notion of regular frontal 
and we show the recovery problem for any regular frontal can be 
directly solved by our sublimation {\color{black}(Theorem \ref{theorem1})},    
Secondly, we generalize the notion of regular frontal to} 
the notion of pseudo regular frontal, for which the 
{\color{black}sublimation} can be directly applied 
{\color{black}as well (the assertion (1) of Theorem \ref{theorem2}).    
Moreover, w}e show the space of pseudo regular frontals is dense 
in the space of all {\color{black}real-analytic} mappings 
{\color{black}(the assertion (2) of Theorem \ref{theorem2})}, 
and thus, for {\color{black}any} non-pseudo regular frontal, 
we can recover the parametrisation as the limit of a sequence of 
pseudo regular frontals {\color{black}(Theorem \ref{theorem3})}. 
\par 
\medskip 
The paper is organized as follows.   
In Section \ref{section2} the main results 
(Theorem \ref{theorem1}, Theorem \ref{theorem2}, Theorem \ref{theorem3}) 
shall be stated.   
Theorem \ref{theorem1}, Theorem \ref{theorem2} and Theorem \ref{theorem3}  
shall be proved in Section \ref{section3}, Section \ref{section4} 
and Section \ref{section5}, respectively.     
Finally, in Section \ref{examples}, many concrete examples with precise 
calculations shall be given.   

\section{Main results}\label{section2}
\begin{definition}\label{frontal}
{\rm
A {\color{black}real-analytic} mapping $f: U_n\to \mathbb{R}^{n+1}$ is called 
a \textit{frontal} if there exists a real-analytic mapping 
$\nu: U_n\to S^n$ such that 
$df_{\color{black}\bf x}({\color{black}\bf v}){\color{black}\cdot} 
\nu({\color{black}\bf x})=0$ for any 
${\color{black}\bf x}\in U_n$ and any ${\color{black}\bf v}\in 
T_{\color{black}\bf x}U_n$.    
Here, $S^n$ is the unit $n$-dimensional sphere 
{\color{black}in $\mathbb{R}^{n+1}$}.  
}
\end{definition}
\noindent 
For a frontal $f: U_n\to \mathbb{R}^{n+1}$, the mapping 
$\nu: U_n\to S^n$ satisfying Definition \ref{frontal} 
(resp., the function $a: U_n\to \mathbb{R}$ defined by  
$a({\color{black}\bf x})=f({\color{black}\bf x}) 
{\color{black}\cdot} \nu({\color{black}\bf x})$)  
is called the \textit{Gauss mapping} (resp., the \textit{height function}) 
of $f$.    
{\color{black} 
For readers who are not familiar with frontals, 
\cite{ishikawa1, ishikawa2} are recommended as excellent overviews 
with clear explanations.     
\par 
Given a frontal $f: U_n\to \mathbb{R}^{n+1}$, by 
the Gauss mapping $\nu: U_n\to S^n$ and the height function 
$a: U_n\to \mathbb{R}$, the hy\ch{p}erplane family 
$\mathcal{H}_{(\nu, a)}$ can be naturally defined as follows.   
\begin{eqnarray*} 
H_{(\nu({\bf x}), a({\bf x}))} & = & 
\left\{{\bf X}\in \mathbb{R}^{n+1}\, |\, {\bf X}\cdot 
\nu({\bf x})=a({\bf x})\right\} \\ 
\mathcal{H}_{(\nu, a)} & = & \left\{H_{(\nu({\bf x}), a({\bf x}))}\right\}_{{\bf x}\in U_n}.  
\end{eqnarray*}
Hence, the data set  
$\left\{\nu({\bf x})\in S^n, \; a({\bf x})\in \mathbb{R}\right\}_{{\bf x}\in U_n}$ 
is called the 
{\it Legendre data} of the frontal $f$.    
\begin{example}\label{example0}
Let $\nu: \mathbb{R}\to S^1$ 
be the constant mapping \ch{given by} 
$\nu({x})=(0,1)\in S^1$ and  $a: \mathbb{R}\to \mathbb{R}$ \ch{be the function} $a(x)=x$.    Then, it is clear that there \ch{is} 
no frontal $f$ such that $\{(\nu(x) , a(x))\}_{x\in \mathbb{R}}$ 
is the Legendre data of $f$.    
\end{example} 
\noindent 
By Example \ref{example0}, not all data sets   
$\left\{\nu({\bf x})\in S^n, \; a({\bf x})\in \mathbb{R}\right\}_{{\bf x}\in U_n}$ 
can become the Legendre data of a certain frontal $f$.     
\begin{definition}\label{definition1}
Let $\mathcal{H}_{\left(\nu, a\right)}$ 
be a hyperplane family.   
A real-analytic mapping ${f}: U_n\to \mathbb{R}^{n+1}$ is called 
an \textit{envelope created by} 
$\mathcal{H}_{\left(\nu, a\right)}$ 
if the following two conditions are satisfied.   
\begin{enumerate}
\item[(a)] $f({\bf x})\in 
H_{\left(\nu({\bf x}), a({\bf x})\right)}$ 
for any ${\bf x}\in U_n$.  
\item[(b)] $d{f}_{\bf x}({\bf v})\cdot \nu({\bf x})=0$ 
for any ${\bf x}\in U_n$ and 
any ${\bf v}\in T_{\bf x} U_n$.    
\end{enumerate}
\end{definition}
\noindent 
In other words, an {envelope created by} 
$\mathcal{H}_{\left(\nu, a\right)}$ 
is a mapping $f: U_n\to \mathbb{R}^{n+1}$ giving a solution  
of the following system of first order differential equations with one constraint 
condition, where $\left(U, \left(x_1, \ldots, x_n\right)\right)$ 
is an arbitrary coordinate neighborhood of $U_n$ such that 
${\bf x}\in U\subset U_n$.      
\[
\left\{
\begin{array}{ccc}
\frac{\partial f}{\partial x_1}({\bf x})\cdot 
\nu({\bf x}) & = & 0, \\ 
 \vdots & { } & { } \\
\frac{\partial f}{\partial x_n}({\bf x})
\cdot \nu({\bf x}) & = & 0, \\ 
f({\bf x})\cdot \nu({\bf x}) & = & a({\bf x}).     
\end{array}
\right.
\]  
\par 
\noindent 
For details on envelopes created by families of plane regular curves, 
refer to \ch{the} excellent book 
\cite{brucegiblin}.    
By definition, any frontal $f: U_n\to \mathbb{R}^{n+1}$ is an envelope created 
by a hyperplane family 
$\mathcal{H}_{(\nu, a)}$ where $\nu: U_n\to S^n$ 
(resp., $a: U_n\to \mathbb{R}$) is the Gauss mapping (resp., the height function) 
of $f$.     Conversely, again by definition, 
any envelope $f: U_n\to \mathbb{R}^{n+1}$ 
created by a hyperplane family 
$\mathcal{H}_{\left(\nu, a\right)}$ 
is a frontal with the Gauss mapping $\ch{\nu}: U_n\to S^n$ and the 
height function $a: U_n\to \mathbb{R}$.     
Thus, frontals and envelopes of hyperplane families are exactly  
the same notion.     
Nevertheless, we prefer to study  
the notion of envelopes created by hyperplane families more 
becuse it seems that envelopes created by hyperplne families are 
more applicable (for instance, see \cite{quantum_entanglement}).   
As in E\ch{xa}mple \ref{example0}, not all hyperplane families create 
envelopes.     Thus, it is important to solve the following \ch{item} (1) of 
Problem \ref{problem1}.    
In addition, since envelopes are real-analytic mappings by definition, 
especially for applications of envelopes created by hyperplane families, 
solving the following \ch{item} (2) of Problem \ref{problem1} is important as well.    
\begin{problem}\label{problem1}
Let $\nu: U_n\to S^n$ and $a: U_n\to \mathbb{R}$ 
be a real-analytic mapping and a real- analytic function respectively.   
\begin{enumerate}
\item[(1)] When and only when does the hyperplane family 
$\mathcal{H}_{(\nu, a)}$ create an envelope? \\ 
\item[(2)] Suppose that the hyperplane family $\mathcal{H}_{(\nu, a)}$ creates 
an envelope $f: U_n\to \mathbb{R}^{n+1}$.     
Then, describe $f$ in terms of $\nu, a$.    
\end{enumerate}
\end{problem}
Both problems in Problem \ref{problem1} \ch{have} been solved as follows.   
\begin{fact}[\cite{nishimura}]\label{fact0}
Let $\nu: U_n\to S^n$ and $a: U_n\to \mathbb{R}$ 
be a real-analytic mapping and a real- analytic function respectively. 
\begin{enumerate}
\item[(1)] The hyperplane family 
$\mathcal{H}_{\left(\nu, a\right)}$ 
creates an envelope if and only if 
there exists a $1$-form {$\Omega$} along 
${\nu}$ such that for any ${\bf x}_0\in U_n$ 
by using of a coordinate neighborhood 
$\left(U, \left(x_1, \ldots, x_n\right)\right)$ of $U_n$ at 
${\bf x}_0$ and 
a \textit{normal} coordinate neighborhood 
$\left(V, \left(\Theta_1, \ldots, 
\Theta_n\right)\right)$ 
of  $S^n$ at  
${\nu}\left({\bf x}_0\right)$, 
the $1$-form germ 
$d{\gamma}$ at ${\bf x}_0$ 
is expressed 
as follows.   
\[
d{a} = 
\sum_{i=1}^n \left(\omega({\bf x})
\left({\Pi}_{\left({\nu}({\bf x}), {\nu}({\bf x}_0)\right)} 
\left(\frac{\partial}{\partial \Theta_i}\right) \right) \right)
d \left(\Theta_i\circ{\nu}\right), 
\]
where {
a normal coordinate neighborhood   
$\left(V, \left(\Theta_1, \ldots, 
\Theta_n\right)\right)$ is a 
nothing but a local inverse mappping of 
the exponential mapping at 
${\nu}\left({\bf x}_0\right)$ and   
${\color{black}\Pi}_{\left({\nu}({\bf x}), {\nu}({\bf x}_0)\right)} : 
T_{{\nu}\left({\bf x}_0\right)}S^n\to T_ {{\nu}\left({\bf x}\right)}S^n$ 
is the Levi-Civita translation.      
{Notice that since the unit sphere $S^n$ with metric inherited from 
$\mathbb{R}^{n+1}$, the Levi-Civita translation 
$\Pi_{\left({\nu}({\bf x}), {\nu}({\bf x}_0)\right)}$ 
is merely the restriction of the rotation 
$R: \mathbb{R}^{n+1}\to \mathbb{R}^{n+1}$ 
satisfying $R({\nu}({\bf x}_0))={\nu}({\bf x})$ 
to the tangent space $T_{{\nu}\left({\bf x}_0\right)}S^n$.    
In particular, in the case $n=1$, a normal coordinate $\Theta$ at 
${\nu}\left(x\right)$ is just the \textit{radian} 
(or, its negative) between two unit vectors ${\nu}\left({\bf x}_0\right)$ and 
${\nu}\left({\bf x}\right)$ and the Levi-Civita translation 
$\Pi_{\left({\nu}({\bf x}), \widetilde{\nu}({\bf x}_0)\right)}$ is nothing but  
the restriction of the plane rotation through $\Theta$ to the tangent space 
$T_{\widetilde{\nu}\left({\bf x}_0\right)}S^1$}.     \\ 
\item[(2)] Suppose that the hyperplane family $\mathcal{H}_{(\nu, a)}$ creates 
an envelope $f: U_n\to \mathbb{R}^{n+1}$.     
Then, for 
any ${\bf x}\in U_n$, 
under the canonical identifications 
$ 
T^*_{{\nu}({\bf x})}S^n\cong T_{{\nu}({\bf x})}S^n 
\subset T_{{\nu}({\bf x})}\mathbb{R}^{n+1} \cong \mathbb{R}^{n+1}$, 
the $(n+1)$-dimensional vector ${f}({\bf x})$ is represented as follows.   
\[
{f}({\bf x})={\omega}({\bf x})+{a}({\bf x}){\nu}({\bf x}),   
\]  
where the $(n+1)$-dimensional vector ${\omega}({\bf x})$ is identified 
with the corresponding $n$-dimensional 
cotangent vector ${\omega}({\bf x})$ under these identifications.   
}   
\end{enumerate}
\end{fact}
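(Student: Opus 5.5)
The plan is to reduce both assertions of Fact \ref{fact0} to one linear-algebra identity relating $da$ to the derivative of $\nu$. We work locally at a point ${\bf x}_0\in U_n$ with normal coordinates $(\Theta_1,\ldots,\Theta_n)$ of $S^n$ centred at $\nu({\bf x}_0)$. The guiding observation is that, for any map $f$ with $f\cdot\nu=a$, differentiating gives
\[
da = df\cdot \nu + f\cdot d\nu .
\]
Hence condition (b) of Definition \ref{definition1} is equivalent to $da = f\cdot d\nu$. Since $d\nu_{\bf x}({\bf v})\in T_{\nu({\bf x})}S^n$, only the tangential component $\omega({\bf x}):=f({\bf x})-a({\bf x})\nu({\bf x})$ contributes. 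So the envelope condition reads
\[
da=\omega\cdot d\nu,
\]
where $\omega$ is a section along $\nu$ of the tangent bundle, equivalently a $1$-form along $\nu$ under the identification $T^*S^n\cong TS^n$.

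The next step is to rewrite $\omega\cdot d\nu$ in normal coordinates. Since $d\nu=\sum_i \frac{\partial}{\partial\Theta_i}\big|_{\nu({\bf x})}\, d(\Theta_i\circ\nu)$, we get
\[
\omega\cdot d\nu=\sum_i \omega({\bf x})\Big(\frac{\partial}{\partial \Theta_i}\Big|_{\nu({\bf x})}\Big)\,d(\Theta_i\circ\nu).
\]
We then have to match the coordinate vectors $\frac{\partial}{\partial\Theta_i}$ at $\nu({\bf x})$ with $\Pi_{(\nu({\bf x}),\nu({\bf x}_0))}(\frac{\partial}{\partial\Theta_i})$, the parallel transport of the vectors at $\nu({\bf x}_0)$. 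I expect this to be the main obstacle. On the round sphere the coordinate frame of the exponential chart is not parallel: it differs from the transported frame by the Jacobi-field factor $\sin r/r$ in directions orthogonal to the radial geodesic. One option is to absorb the discrepancy into the definition of $\Omega$, since the statement only asks for existence of some $1$-form along $\nu$. Concretely, we define $\Omega({\bf x})$ as $\omega({\bf x})$ precomposed with the inverse of the linear map sending each transported vector $\Pi(\partial/\partial\Theta_i)$ to the coordinate vector $\partial/\partial\Theta_i|_{\nu({\bf x})}$. This map is an analytic invertible bundle map near $\nu({\bf x}_0)$, so $\Omega$ is analytic. We must also check that the resulting $\Omega$ is independent of ${\bf x}_0$, or at least that the local definitions patch together. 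I would try to verify this by noting that at ${\bf x}={\bf x}_0$ the correction is the identity, and both sides are intrinsic there. The alternative is to read the statement as saying that the displayed formula holds with $\omega=\Omega$ up to this canonical identification, and to accept that interpretation.

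With the identity in hand, the two directions of (1) go as follows. For the forward direction ($\Rightarrow$), given an envelope $f$, we set $\omega=f-a\nu$; the computation above yields the formula. For the converse ($\Leftarrow$), given $\Omega$ satisfying the formula, we define $f:=\omega+a\nu$, with $\omega$ the tangent vector field along $\nu$ corresponding to $\Omega$. This $f$ is real-analytic. Condition (a) holds because $\omega\perp\nu$, so $f\cdot\nu=a$. Condition (b) holds by running the computation backwards, $df\cdot\nu=da-f\cdot d\nu=da-\omega\cdot d\nu=0$. Assertion (2) is then immediate from the forward direction, since $f=(f-a\nu)+a\nu=\omega+a\nu$. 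Uniqueness of $\omega$ follows because $\omega$ is forced to be the tangential part of $f$.
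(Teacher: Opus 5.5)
The paper does not prove Fact~\ref{fact0}; it quotes it from earlier work. The nearest argument is the proof of Theorem~\ref{theorem1} in Section~\ref{section3}. There $\partial a/\partial x_i=f\cdot\partial\nu/\partial x_i$ is written in the chart $\nu^n$ against the coordinate frame $\tilde\mu_j=\partial\nu^n/\partial\theta_j$, deliberately avoiding normal coordinates and Levi-Civita translation. Your core reduction is the same mechanism and is correct:
given (a), condition (b) is equivalent to $da=\omega\cdot d\nu$ with $\omega=f-a\nu$ tangent along $\nu$;
the converse $f:=\omega+a\nu$ works because $\omega\perp\nu$ and $\nu\cdot d\nu=0$;
and (2) then follows immediately.

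The gap is the step you flagged, and the fix you favour does not close it. First, the correction goes the wrong way: with $A$ sending $\Pi(\partial/\partial\Theta_i)$ to $\partial/\partial\Theta_i|_{\nu({\bf x})}$, you need $\Omega=\omega\circ A$, not $\omega\circ A^{-1}$. More seriously, $A=A_{{\bf x}_0}({\bf x})$ depends on the base point. You therefore get a family $\Omega_{{\bf x}_0}$, each of which agrees with $\omega$ only at ${\bf x}={\bf x}_0$. That ``the correction is the identity at ${\bf x}_0$'' is exactly why these local forms do not patch. Moreover, any single $\Omega\neq\omega$ would contradict (2), which uses the same form.

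In fact, for $n\ge 2$ no single $\Omega$ can satisfy the displayed identity on a neighbourhood of every ${\bf x}_0$:
\begin{itemize}
\item Evaluated at ${\bf x}={\bf x}_0$, the identity forces $\Omega=\omega$ wherever $d\nu$ is onto.
\item At ${\bf x}$ near such an ${\bf x}_0$, it then says $\omega({\bf x})(w)\,(r/\sin r-1)=0$ for every $w\in T_{\nu({\bf x})}S^n$ orthogonal to the geodesic from $\nu({\bf x}_0)$ to $\nu({\bf x})$, where $r$ is its length. This is because $d\exp$ scales those directions by $\sin r/r$ relative to $\Pi$.
\item Letting ${\bf x}\to{\bf x}_0$ from two independent directions gives $\omega({\bf x}_0)=0$.
\end{itemize}
This is false, for example, for the sphere ${\bf X}_0+\nu^n$ with ${\bf X}_0\neq 0$, where $\omega$ is the tangential part of ${\bf X}_0$.

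So the only consistent reading is at the base point. There $\Pi=\mathrm{id}$ and $d\Theta_i$ is dual to the orthonormal frame $\partial/\partial\Theta_i$, so the identity reads $da_{{\bf x}_0}=\omega({\bf x}_0)\circ d\nu_{{\bf x}_0}$. Since ${\bf x}_0$ is arbitrary, this is exactly $da=\omega\cdot d\nu$ on $U_n$, with $\Omega=\omega=f-a\nu$. Equivalently, use $\partial/\partial\Theta_i|_{\nu({\bf x})}$ in place of $\Pi(\partial/\partial\Theta_i)$; this is what your computation actually gives as a genuine germ identity. For $n=1$ the two versions agree, since the arc-length frame on $S^1$ is parallel. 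State your ``alternative'' this way, rather than as an unspecified ``canonical identification'', and the argument is complete.
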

\noindent 
For readers who are not familiar with normal coordinate neighborhoods,  
exponential mappings and Levi-Civita translations, 
\cite{kobayashinomizu} is recommended 
as one of excellent references.   
\par 
The recovery problem from Legendre data which we \ch{deal with} in this paper  
arises as a hard problem when we try to \ch{do concrete computations} by using 
the assertion (2) of Fact \ref{fact0} in the case 
$n\ge 2$.    
In the case $n=1$, there \ch{is} no problem on \ch{doing} concrete computation\ch{s} as in 
Section \ref{section1}.     
On the other hand, in the case $n\ge 2$, even by using 
symbolic algebras, it seems that local concrete computations on the one form 
$\omega({\bf x})$ and the Levi-Civita translation 
${\color{black}\Pi}_{\left({\nu}({\bf x}), {\nu}({\bf x}_0)\right)} : 
T_{{\nu}\left({\bf x}_0\right)}S^n\to T_ {{\nu}\left({\bf x}\right)}S^n$ 
are almost impossible.    
Thus, for the recovery of the concrete form of \ch{the} parametrization $f$ 
from \ch{the} Legendre data $\left\{\nu({\bf x}), a({\bf x})\right\}_{{\bf x}\in U_n}$ 
in the case $n\ge 2$, we had to exploit a new technique 
instead of assertion (2) of Fact \ref{fact0}.    
}   
\par 
\smallskip 
Notice that any non-singular mapping 
$f: U_n\to \mathbb{R}^{n+1}$ is 
a frontal.     
It is T.~Banchoff, T.~Gaffney and C.~McCrory who started to study 
the Gauss mappings $\nu: U_2\to S^2$ (see \cite{cusps}).    
Their study was concentrated on cusps of Gauss  mappings 
for non-singular surfaces 
{\color{black}
$f: U_2\to \mathbb{R}^3$
}.   
On the other hand,  as in the following Example \ref{example1}, 
there are many singular frontals.      
Notice that the Gauss mappings for singular frontals 
$f: U_n\to \mathbb{R}^{n+1}$ have not been studied in depth {\color{black}so far}.     
One of the purposes of this paper is to 
provide an essential contribution on studying Gauss mappings 
for singular frontals.       
\begin{definition}\label{regular frontal}
A frontal $f: U_n\to \mathbb{R}^{n+1}$ is called 
a \textit{regular frontal} if for
the Gauss mapping $\nu:U_n\to S^n$ of $f$, the set 
\[ 
Reg(\nu)=\left\{{\color{black}\bf x}=\left(x_1, \ldots, x_n\right)\in U_n \; |\; 
{\color{black}\bf x} \mbox{ is a regular point of }\nu\right\}
\] 
is 
dense in $U_n$.   
\end{definition} 
 
Notice that $Reg(\nu)$ is always open since 
$Reg(\nu)$ may be characterized as the inverse image of an open subset 
$\mathbb{R}-\{0\}$ (of $\mathbb{R}$) by the Jacobian determinant of $\nu$ which 
is a continuous function $: U_n\to \mathbb{R}$.   

\begin{definition}\label{pseudo regular frontal}
{\rm 
A {\color{black}real-analytic} mapping $f: U_n\to \mathbb{R}^{n+1}$ is called 
a \textit{pseudo regular frontal} if the following (1), (2) are satisfied.   
\begin{enumerate}
\item[(1)] The set 
\[
Reg(f)=\left\{{\color{black}\bf x}=\left(x_1, \ldots, x_n\right)\in U_n\; |\; 
{\color{black}\bf x} \mbox{ is a regular point of }f\right\}
\]
is 
dense in $U_n$.   
\item[(2)] For {\color{black}a} Gauss mapping $\nu: Reg(f)\to S^n$ 
of {\color{black}$f|_{Reg(f)}$}, the set 
\[ 
Reg(\nu)=\left\{{\color{black}\bf x}=\left(x_1, \ldots, x_n\right)\in Reg(f) \; |\; 
{\color{black}\bf x} \mbox{ is a regular point of }\nu\right\}
\] 
is 
dense in $Reg(f)$.   
\end{enumerate}
}
\end{definition} 

Notice that similarly as $Reg(\nu)$, the set $Reg(f)$ is always open.   
\begin{note}
\begin{enumerate}
\item[(1)] Any regular frontal is a frontal, but its converse is not true in general 
(see (2), (4) of Example \ref{example1} below).   \\ 
\item[(2)] Any regular frontal is a pseudo regular frontal, 
but its converse is not true in general (see (5) of Example \ref{example1} below).  
\end{enumerate}
\end{note}

\begin{example}\label{example1}  
For detailed analysis of the following examples, see Section \ref{examples}.
\begin{enumerate}
\item[(1)] The mapping $f: U_1\to \mathbb{R}^2$ defined by 
\[
f(x)=\left(x^2, x^3\right)
\]
is called the \textit{standard normal form of the cusp}, and it is an example of frontal and of regular frontal as well. 
 \item[(2)] The mapping $f: U_2\to \mathbb{R}^3$ defined by 
\[
f(x,y)=\left(x, y^2, y^3\right)
\]
is called the \textit{standard normal form of the cuspidal edge}, 
and it is an example of frontal but {not a regular frontal}. Any $f$ equivalent under diffeomorphisms in source and target to $f$ is a called a \textit{cuspidal edge}. The mapping $f: U_2\to \mathbb{R}^3$ defined by 
\[
f(x,y)=\left(x, y^2+x^2, y^3\right)
\]    
is a cuspidal edge which is a frontal and a regular frontal as well.
 \item[(3)] The mapping $f: U_2\to \mathbb{R}^3$ defined by 
\[
f(x, y)=\left(x, y^2, xy^3\right)
\]
is called the \textit{standard normal form of the cuspidal crosscap}, 
and it is an example of 
frontal and {of regular frontal} as well.     
\item[(4)] The mapping $f: U_2\to \mathbb{R}^3$ defined by 
\[
f(x, y)=\left(x, 4y^3+2xy, 3y^4+xy^2\right)
\]
is called the \textit{standard normal form of the swallowtail}, 
and it is an example of 
frontal but {not a regular frontal}.     
\item[(5)] The mapping $f: \mathbb{R}^2\to \mathbb{R}^3$ defined by 
\[
f(x, y)=\left(x, y^2, xy\right)
\]
is called the \textit{standard normal form of the crosscap}, 
and it is not a frontal.    Set $U_2=\mathbb{R}^2-\{(0,0)\}$.    
Then, $f|_{{U_2}}$ is \text{\color{black}non-singular}.    
{It is easily seen that $f|_{{U_2}}$ is a regular  
frontal} as well.    
{Thus, $f$ is a pseudo regular frontal.}     
\end{enumerate}
The image of these mappings can be found \ch{in} Figure \ref{fig ex1}.
\end{example} 

\begin{figure}[ht]
	\includegraphics[width=.3\textwidth]{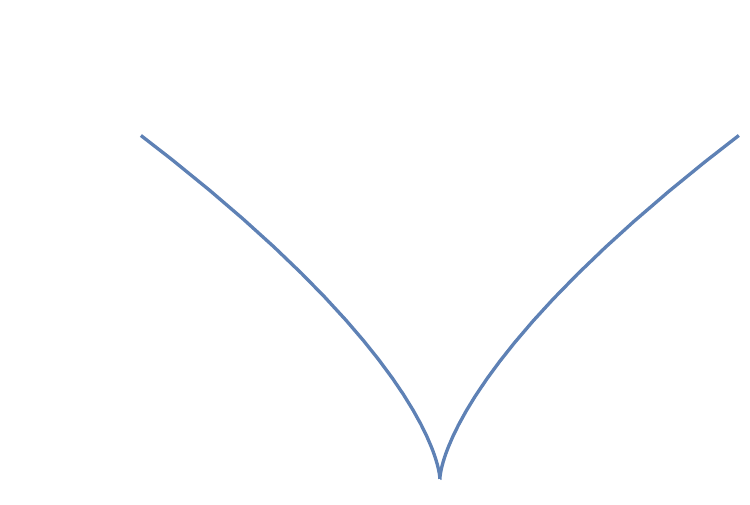}
		\hspace{1.5em}
	\includegraphics[width=.3\textwidth]{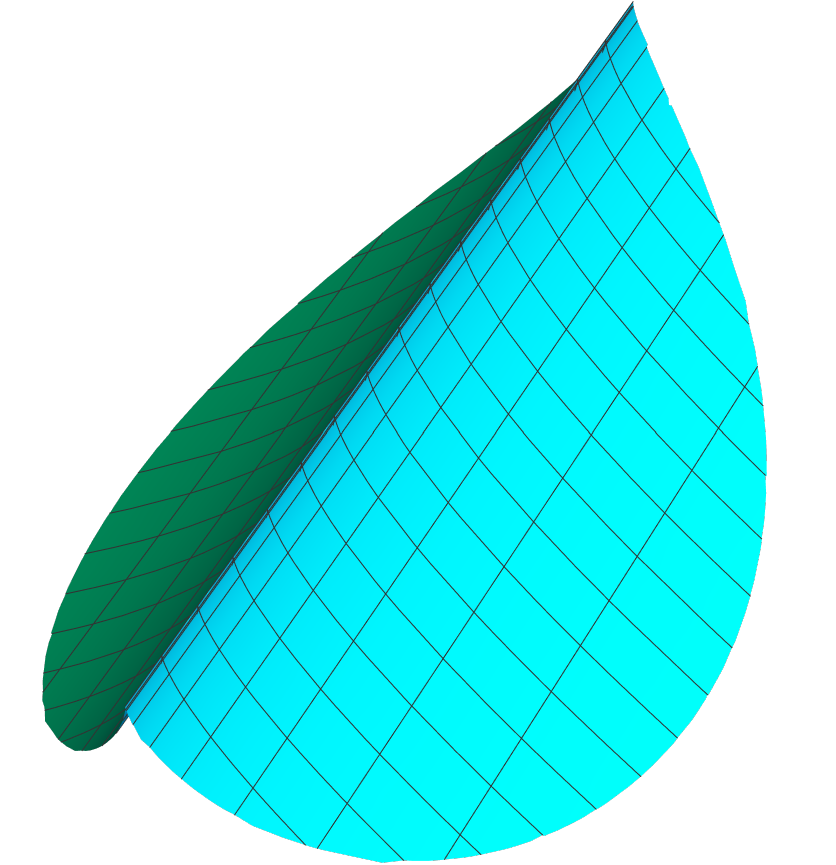}
		\hspace{1.5em}
	\includegraphics[width=.3\textwidth]{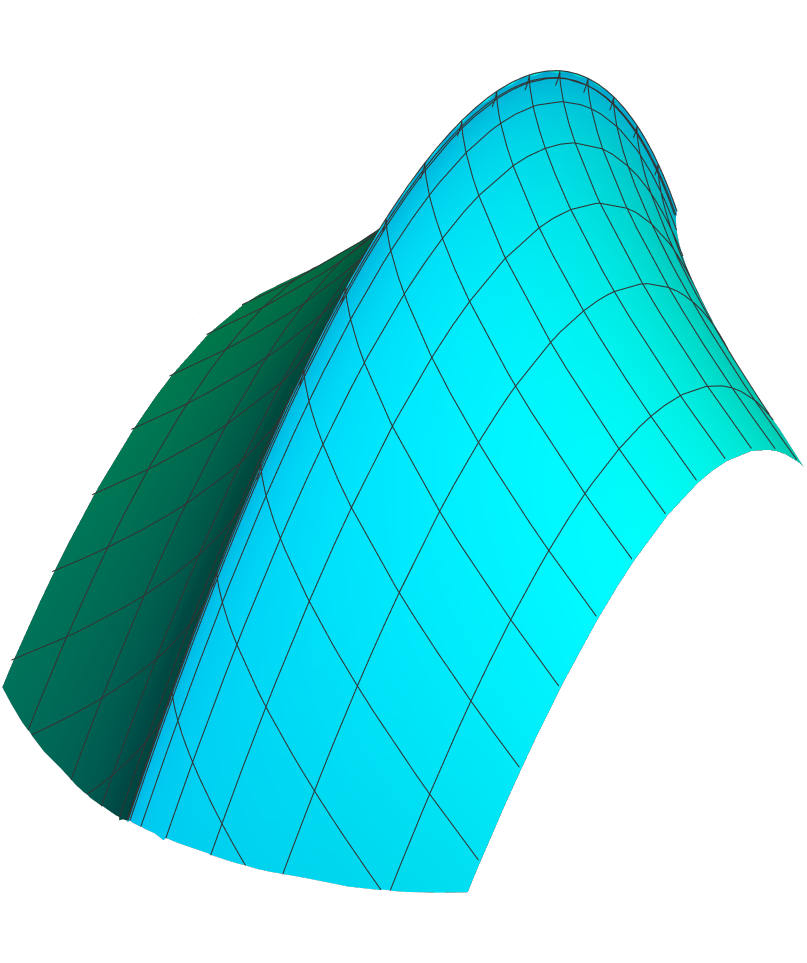}\\
	\includegraphics[width=.3\textwidth]{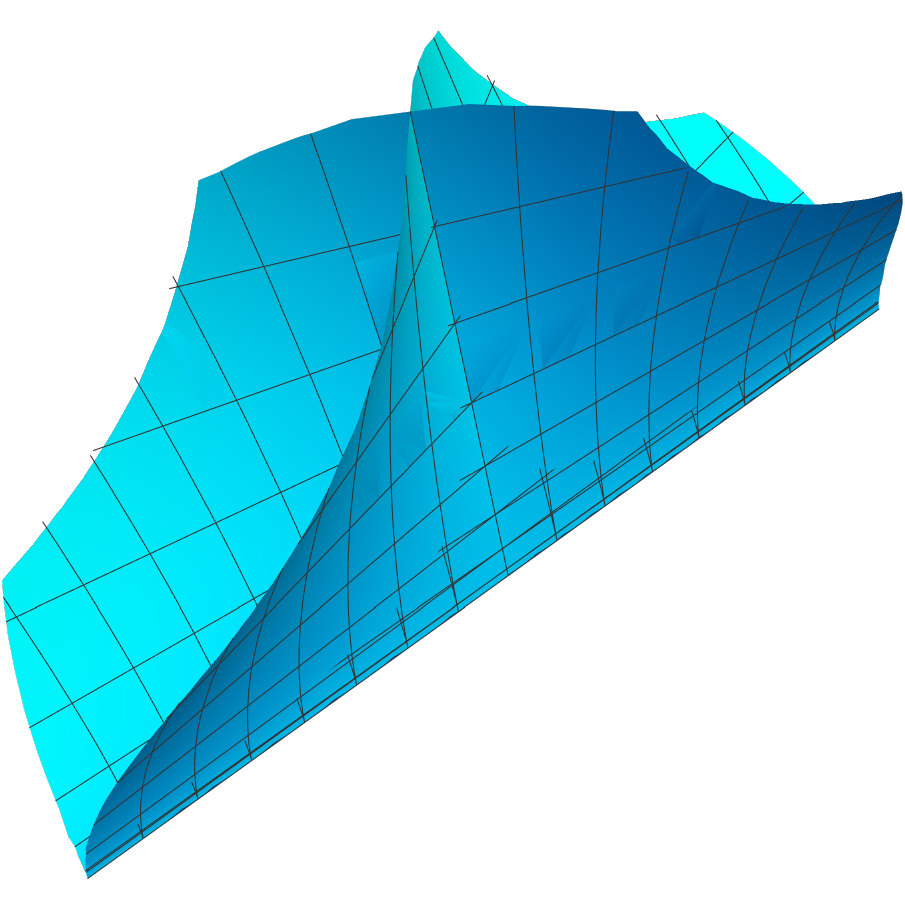}
		\hspace{.5em}
	\includegraphics[width=.3\textwidth]{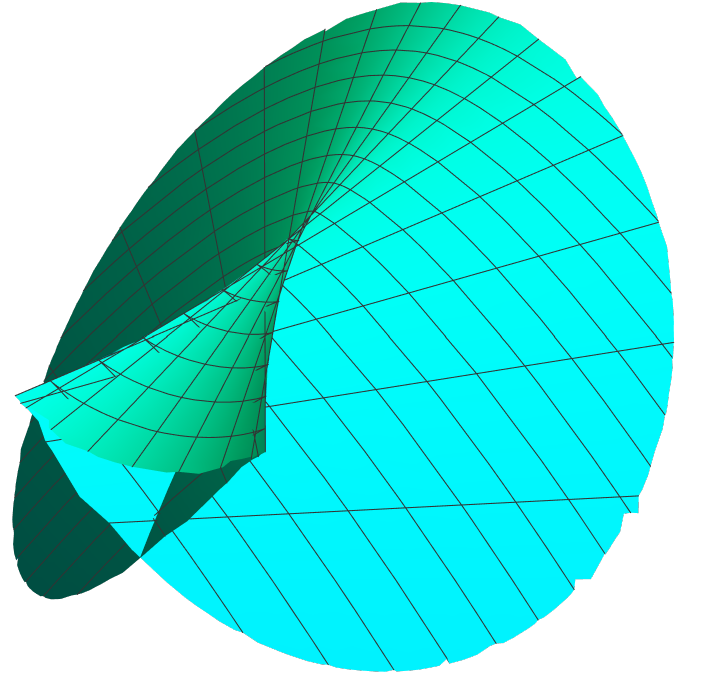}
		\hspace{1.5em}
	\includegraphics[width=.3\textwidth]{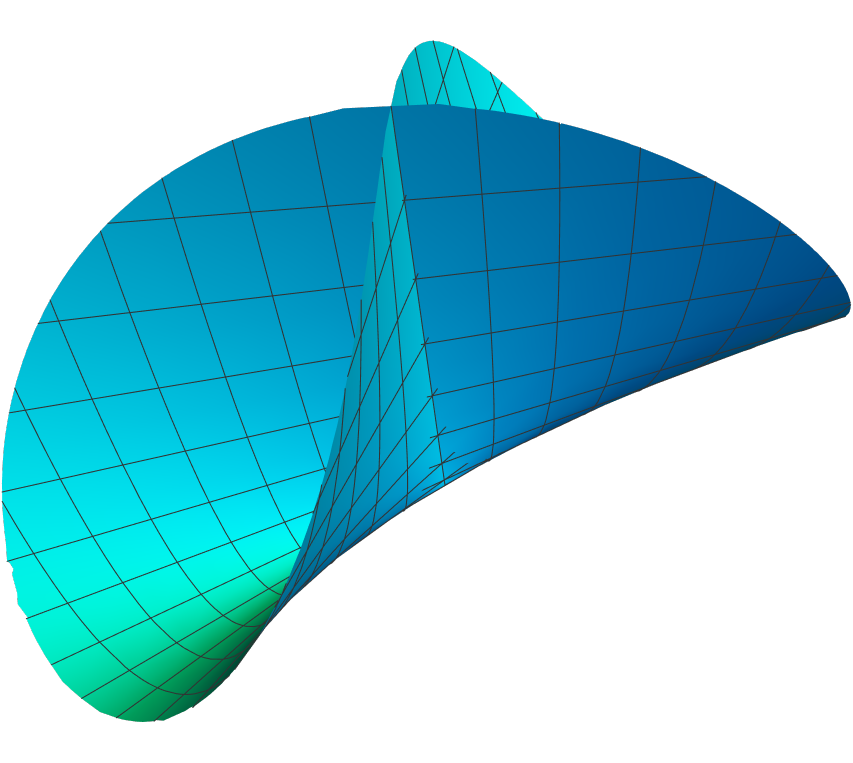}
	\caption{Images of the mappings from Example \ref{example1}, in order of appearance. \label{fig ex1}}
\end{figure}

\begin{definition}\label{Legendre data}
{\rm 
Given a {\color{black}pseudo regular} frontal $f: U_n\to \mathbb{R}^{n+1}$, 
the set 
$
\left\{
\nu({\color{black}\bf x})
\; 
a({\color{black}\bf x})
\right\}_{{\color{black}\bf x}\in Reg({\color{black}\nu})}
$ 
is called the \textit{Legendre data} of $f$,\; 
where $\nu: Reg(f)\to S^n$ {\color{black}is a Gauss mapping of $f|_{Reg(f)}$,  
$a: Reg(f)\to \mathbb{R}$ is the height function of $f|_{Reg(f)}$ and 
$Reg(\nu)$ is the set of regular points of $\nu$
}.       
}
\end{definition} 
\begin{theorem}\label{theorem1}
Let $f: U_n\to \mathbb{R}^{n+1}$ be a {regular} frontal.   
Then, $f$ is recovered from its Legendre data 
$\left\{\nu({\color{black}\bf x}), 
a({\color{black}\bf x})\right\}_{{\color{black}\bf x}\in Reg({\color{black}\nu})}$.   
{\color{black} 
Namely, for any ${\color{black}\bf x}\in U_n$, 
$f({\color{black}\bf x})$ can be concretely described in terms of 
$\nu\left({\color{black}\bf x}_i\right), 
a\left({\color{black}\bf x}_i\right)$ and $\lim_{i\to \infty}$ where 
$\left\{{\color{black}\bf x}_i\ldots\right\}_{i=1, 2, \ldots}$ 
is any sequence of regular points of $\nu: U_n\to S^n$ 
such that $\lim_{i\to \infty}{\color{black}\bf x}_i={\color{black}\bf x}$.     
}
\end{theorem}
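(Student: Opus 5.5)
The plan is to work first at a regular point ${\bf x}\in Reg(\nu)$. There $f({\bf x})$ is determined pointwise by $\nu({\bf x})$, $a({\bf x})$ and the first derivatives of $\nu$ and $a$, via a linear system. Then I extend to all of $U_n$ by continuity, using that $Reg(\nu)$ is dense.

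Fix ${\bf x}\in Reg(\nu)$ and local coordinates $(x_1,\ldots,x_n)$. Differentiating the constraint $f\cdot\nu=a$ with respect to $x_j$ and using the frontal condition $\frac{\partial f}{\partial x_j}\cdot\nu=0$ gives
\[
f({\bf x})\cdot\frac{\partial \nu}{\partial x_j}({\bf x})=\frac{\partial a}{\partial x_j}({\bf x}),\qquad j=1,\ldots,n .
\]
Together with $f({\bf x})\cdot\nu({\bf x})=a({\bf x})$, this is a linear system of $n+1$ equations for the $n+1$ coordinates of $f({\bf x})$. Its coefficient matrix $M({\bf x})$ has rows $\nu({\bf x}),\frac{\partial\nu}{\partial x_1}({\bf x}),\ldots,\frac{\partial\nu}{\partial x_n}({\bf x})$.

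I claim that $M({\bf x})$ is invertible exactly when ${\bf x}\in Reg(\nu)$. Since $\nu\cdot\nu=1$, each $\frac{\partial\nu}{\partial x_j}$ is orthogonal to $\nu$, so it lies in $T_{\nu({\bf x})}S^n=\nu({\bf x})^\perp$. Hence the rows are linearly independent if and only if the vectors $\frac{\partial\nu}{\partial x_j}({\bf x})$ span $\nu({\bf x})^\perp$, which is precisely regularity of $\nu$ at ${\bf x}$. Equivalently, $\det M({\bf x})$ equals, up to sign, the Jacobian determinant of $\nu$ computed in an orthonormal frame of the tangent space. This gives the explicit formula
\[
f({\bf x})=M({\bf x})^{-1}\,{}^t\!\left(a({\bf x}),\frac{\partial a}{\partial x_1}({\bf x}),\ldots,\frac{\partial a}{\partial x_n}({\bf x})\right),
\]
which can be written out by Cramer's rule. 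Alternatively, I would decompose $f({\bf x})=a({\bf x})\nu({\bf x})+w({\bf x})$ with $w({\bf x})\in\nu({\bf x})^\perp$. Then $w({\bf x})$ is the unique tangent vector with $w\cdot\frac{\partial\nu}{\partial x_j}=\frac{\partial a}{\partial x_j}$, namely $w=\sum_{j,k} g^{jk}\frac{\partial a}{\partial x_j}\frac{\partial \nu}{\partial x_k}$, where $g_{jk}=\frac{\partial\nu}{\partial x_j}\cdot\frac{\partial\nu}{\partial x_k}$ is the pullback metric. This matches the $n=1$ formula of Section \ref{section1} and Fact \ref{fact0}(2).

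The data $\{\nu({\bf x}),a({\bf x})\}_{{\bf x}\in Reg(\nu)}$ is given on the open set $Reg(\nu)$, so the derivatives of $\nu$ and $a$ there are determined by the data. The right-hand side is therefore a function of the Legendre data alone.

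For a general ${\bf x}\in U_n$, choose any sequence ${\bf x}_i\in Reg(\nu)$ with ${\bf x}_i\to{\bf x}$; such a sequence exists because $f$ is a regular frontal, so $Reg(\nu)$ is dense. Since $f$ is continuous (indeed real-analytic), $f({\bf x})=\lim_{i\to\infty}f({\bf x}_i)$, and each $f({\bf x}_i)$ is given by the formula above. The limit is therefore independent of the chosen sequence, and it exists automatically because it equals the value of the continuous map $f$.

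The main obstacle is the invertibility claim for $M({\bf x})$: it must be justified carefully that the tangency of $\partial\nu/\partial x_j$ to the sphere turns rank $n$ of $d\nu$ into rank $n+1$ of $M$. The rest is routine.
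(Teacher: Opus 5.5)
Your proposal is correct and shares the paper's skeleton: differentiate $a=f\cdot\nu$ and use the frontal condition to get $\partial a/\partial x_j=f\cdot\partial\nu/\partial x_j$, solve uniquely on $Reg(\nu)$, then pass to all of $U_n$ by density. The invertibility you flag as the main obstacle is already settled by your own remark: dotting a relation $c_0\nu+\sum_j c_j\,\partial_j\nu=0$ with $\nu$ forces $c_0=0$.

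The difference is in how you extract the tangential part of $f$. The paper lifts $\nu=\nu^n\circ\theta$ through spherical coordinates and proves that the vectors $\tilde\mu_j=\partial\nu^n/\partial\theta_j$ are pairwise orthogonal with $|\tilde\mu_j|=\cos\theta_1\cdots\cos\theta_{j-1}$. It then solves the $n\times n$ system $da=\sum_j b_j\,d\theta_j$ and writes $f=a\nu+\sum_j (b_j/|\tilde\mu_j|)\mu_j$. You invert directly, either through the matrix with rows $\nu,\partial_1\nu,\dots,\partial_n\nu$ or through the pullback metric $g_{jk}=\partial_j\nu\cdot\partial_k\nu$.

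The two formulas agree. Near a point of $Reg(\nu)$ where the lift exists, the $\theta_j$ are local coordinates, $g$ is diagonal in them with entries $|\tilde\mu_j|^2$, and $\partial a/\partial\theta_j=b_j$. So your $\sum_{j,k} g^{jk}\,\partial_j a\,\partial_k\nu$ is exactly the paper's $\sum_j (b_j/|\tilde\mu_j|)\mu_j$.

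The paper's route gives an explicit orthonormal frame and a literal generalization of the $n=1$ recipe ``divide $da$ by $d\theta$''. That is how the computations in Section~\ref{examples} are organized. Your route is coordinate-free on the sphere and has three advantages:
\begin{enumerate}
\item[(1)] It needs no lift $\theta$. The paper takes for granted that such a lift exists globally on $Reg(f)$, and it degenerates where $\nu$ meets the singular set of spherical coordinates, e.g.\ the poles, where some $|\tilde\mu_j|$ vanish.
\item[(2)] Your invertibility criterion is a two-line linear-algebra fact rather than an inductive orthogonality lemma.
\item[(3)] Your extension step uses only continuity of $f$. The paper instead extends $\theta$ and the $b_j$ individually, and continuity on a dense open set does not by itself guarantee that.
\end{enumerate}
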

Let {\color{black}$A(U_n, \mathbb{R}^{n+1})$} be the topological space 
of {\color{black}real-analytic} 
mappings $f: U_n\to \mathbb{R}^{n+1}$ endowed 
with the
Whitney $C^\infty$ topology.   
Let ${PRF}\left(U_n, \mathbb{R}^{n+1}\right)$ be the topological 
subspace of {\color{black}$A(U_n, \mathbb{R}^{n+1})$}, 
consisting of {pseudo regular} frontals.    
\begin{theorem}\label{theorem2}
{\color{black} 
As for the subspace of pseudo regular frontals 
${PRF}\left(U_n, \mathbb{R}^{n+1}\right)$, the following two hold.   
\begin{enumerate}
\item[(1)] Any $f\in {PRF}\left(U_n, \mathbb{R}^{n+1}\right)$ 
can be recovered from the Legendre data 
$\left\{\nu({\color{black}\bf x}),\; 
a({\color{black}\bf x}) \right\}_{{\color{black}\bf x}\in Reg(\nu)}$.  
{\color{black} 
Namely, for any ${\color{black}\bf x}\in U_n$, 
$f({\color{black}\bf x})$ can be concretely described in terms of 
$\nu\left({\color{black}\bf x}_i\right), 
a\left({\color{black}\bf x}_i\right)$ and $\lim_{i\to \infty}$ where 
$\left\{{\color{black}\bf x}_i\ldots\right\}_{i=1, 2, \ldots}$ 
is any sequence of regular points of $\nu: Reg(f)\to S^n$ 
such that $\lim_{i\to \infty}{\color{black}\bf x}_i={\color{black}\bf x}$.     
}
\item[(2)] The subspace ${PRF}\left(U_n, \mathbb{R}^{n+1}\right)$ is dense 
in $A(U_n, \mathbb{R}^{n+1})$.   
\end{enumerate}
}
\end{theorem}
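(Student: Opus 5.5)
The plan is to treat the two assertions separately. Assertion (1) reduces to the local recovery formula on $Reg(\nu)$ followed by a limit. Assertion (2) is a genericity argument using analyticity in a finite-dimensional perturbation parameter.

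\emph{Assertion (1).} I first derive the recovery formula. On $Reg(f)$ the map $f$ is non-singular, hence a frontal with Gauss mapping $\nu$ and height function $a=f\cdot\nu$. In local coordinates, differentiating $a=f\cdot\nu$ and using $\frac{\partial f}{\partial x_j}\cdot\nu=0$ gives
\[
\frac{\partial a}{\partial x_j}=f\cdot\frac{\partial \nu}{\partial x_j}\qquad (j=1,\ldots,n).
\]
Since $\nu\cdot\frac{\partial\nu}{\partial x_j}=0$, the vector $w=f-a\nu\in\nu^{\perp}$ satisfies $w\cdot\frac{\partial \nu}{\partial x_j}=\frac{\partial a}{\partial x_j}$. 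Let $J$ be the $(n+1)\times n$ matrix with columns $\frac{\partial\nu}{\partial x_j}$. At a point ${\bf x}\in Reg(\nu)$ these columns span $\nu({\bf x})^{\perp}$ and $J^TJ$ is invertible, so
\[
f({\bf x})=a({\bf x})\nu({\bf x})+J({\bf x})\left(J({\bf x})^TJ({\bf x})\right)^{-1}\nabla a({\bf x}).
\]
This is expressed purely through the Legendre data, and it is the same formula underlying Theorem \ref{theorem1}.

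Two further points are needed. First, $\nu$ on $Reg(f)$ is only determined up to sign on each connected component. Replacing $(\nu,a)$ by $(-\nu,-a)$ turns $J$ into $-J$ and $\nabla a$ into $-\nabla a$, so the right-hand side is unchanged. Second, $Reg(\nu)$ is dense in $Reg(f)$, which is dense in $U_n$, so $Reg(\nu)$ is dense in $U_n$. By continuity of $f$, $f({\bf x})=\lim_{i\to\infty}f({\bf x}_i)$ for any sequence ${\bf x}_i\in Reg(\nu)$ converging to ${\bf x}$, and each $f({\bf x}_i)$ is given by the formula above.

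\emph{Assertion (2).} I recast the defining conditions as non-vanishing of analytic functions. Let $D_f$ be the sum of squares of the $n\times n$ minors of $df$. By analyticity, $Reg(f)$ is dense if and only if $D_f\not\equiv0$ on each connected component of $U_n$. On $Reg(f)$ let $K_f=\det\bigl(\nu\cdot\frac{\partial^2 f}{\partial x_i\partial x_j}\bigr)$, the numerator of the Gauss–Kronecker curvature. This is analytic up to sign, and $\nu$ is regular exactly where $K_f\neq0$. Hence $Reg(\nu)$ is dense in $Reg(f)$ if and only if $K_f\not\equiv0$ on each of the countably many components of $Reg(f)$.

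Given $g\in A(U_n,\mathbb{R}^{n+1})$ and a Whitney neighbourhood of it, I take a positive real-analytic function $\varepsilon$ on $U_n$ that decays fast enough, together with its derivatives, to keep
\[
f_t=g+t\,\varepsilon\cdot h,\qquad h({\bf x})=\bigl({\bf x},|{\bf x}|^2\bigr),
\]
inside the neighbourhood for $|t|\le1$. Such an $\varepsilon$ exists by Whitney/Grauert-type analytic approximation of a smooth positive function, and to make its existence actually usable I would arrange $\varepsilon$ itself to be, for instance, an analytic approximation whose derivatives are controlled so that $D$ and $K$ remain computable. The aim is to show that for all but countably many $t$ the functions $D_{f_t}$ and $K_{f_t}$ are not identically zero on any relevant component. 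At a fixed point ${\bf x}$, the values $D_{f_t}({\bf x})$ and $K_{f_t}({\bf x})\,D_{f_t}({\bf x})^{m}$ (with $m$ chosen to clear the normalisation of $\nu$) are polynomials in $t$. Their top-degree coefficients come from $\varepsilon h$ and its derivatives, and these are non-zero at generic ${\bf x}$ because the paraboloid has non-degenerate Gauss map. So each polynomial is non-zero, and the bad set of $t$ for each of the countably many components is finite. The complement is dense, so some small $t$ yields $f_t\in PRF(U_n,\mathbb{R}^{n+1})$.

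\emph{Main obstacle.} The hardest step is the leading-coefficient check once $\varepsilon$ is non-constant. The derivatives of $\varepsilon$ enter $d(\varepsilon h)$, so I must verify that $d(\varepsilon h)$ still has rank $n$ and that its second fundamental form is non-degenerate at some point of every component. Because the components of $Reg(f_t)$ themselves depend on $t$, I would handle the dependence on $t$ by working pointwise on a countable dense set of points. I would first try to choose $\varepsilon$ of the form $e^{-\psi}$ with $\psi$ analytic, so that these derivatives are controlled relative to $\varepsilon$.
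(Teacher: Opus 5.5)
\textbf{Assertion (1).} Your argument is correct and is essentially the paper's. The formula $f=a\nu+J(J^TJ)^{-1}\nabla a$ on $Reg(\nu)$ is the coordinate-free form of solving $da=\sum_j b_j\,d\theta_j$ and expanding in the frame $\{\nu,\mu_1,\dots,\mu_n\}$. Concluding by continuity of $f$ along ${\bf x}_i\to{\bf x}$ is, if anything, cleaner than the paper's extension of $\theta$ and the $b_j$.

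\textbf{Assertion (2): the gap.} There is a genuine gap, and it is exactly the step you flag. The top coefficient in $t$ of your curvature polynomial at ${\bf x}$ is the corresponding quantity for $\varepsilon h$. That quantity involves $d\varepsilon({\bf x})$ and $d^2\varepsilon({\bf x})$, so non-degeneracy of the paraboloid's Gauss map does not pass to $\varepsilon h$.
\begin{itemize}
\item For instance, on $U_n=\{|{\bf x}|>1\}$ the positive analytic weight $\varepsilon=|{\bf x}|^{-2}$ gives $\varepsilon h({\bf x})=({\bf x}/|{\bf x}|^2,1)$. This is a piece of a hyperplane with constant Gauss map.
\item Nothing in your choice of $\varepsilon$ rules out such behaviour on some component.
\item The rank part is actually fine for every positive $\varepsilon$: $d(\varepsilon h)_{\bf x}{\bf v}=\varepsilon\,dh_{\bf x}{\bf v}+(d\varepsilon_{\bf x}{\bf v})\,h({\bf x})$, with $h({\bf x})\notin dh_{\bf x}(\mathbb{R}^n)$ for ${\bf x}\neq{\bf 0}$ and $h({\bf 0})={\bf 0}$.
\end{itemize}
Two smaller problems:
\begin{itemize}
\item For odd $n$, $K_fD_f^m$ need not be a polynomial in $t$ for any integer $m$.
\item Organising the argument by components of $Reg(f_t)$ is what makes the $t$-dependence unmanageable.
\end{itemize}

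\textbf{A repair.}
\begin{itemize}
\item Use $G_f=\det\bigl(N_f\cdot\partial^2 f/\partial x_i\partial x_j\bigr)$, where $N_f$ is the unnormalised cross product of the $\partial f/\partial x_j$. It is analytic on all of $U_n$, satisfies $|N_f|^2=D_f$, and is polynomial in $t$ along $f_t$.
\item On a component $V$ of $U_n$ with $D_f\not\equiv 0$, $Reg(\nu)$ is dense in $Reg(f)\cap V$ if and only if $G_f\not\equiv 0$ on $V$.
\item Pick one point ${\bf x}_V$ in each component $V$ and recentre the paraboloid: $h({\bf x})=({\bf x}-{\bf x}_V,|{\bf x}-{\bf x}_V|^2)$ on $V$. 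This is analytic on $U_n$ because the components are open and disjoint.
\item Since $h({\bf x}_V)={\bf 0}$ and $N_h\perp\partial h/\partial x_j$, every term containing $d\varepsilon$ or $d^2\varepsilon$ drops out at ${\bf x}_V$. This gives $D_{\varepsilon h}({\bf x}_V)=\varepsilon({\bf x}_V)^{2n}$ and $G_{\varepsilon h}({\bf x}_V)=\pm 2^n\varepsilon({\bf x}_V)^{n(n+1)}$.
\item Hence $t\mapsto D_{f_t}({\bf x}_V)$ and $t\mapsto G_{f_t}({\bf x}_V)$ are non-zero polynomials. The bad set of $t$ is countable, and your argument closes.
\end{itemize}

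\textbf{Comparison with the paper.} This centring is what the paper exploits in Lemma 4.2. There the perturbation $\frac12{\bf c}_\ell\cdot F^2$ vanishes with its first derivatives at ${\bf x}_0$, so the Hessian there becomes $\mathrm{Hess}+\mathrm{diag}({\bf c}_\ell)$. In Lemma 4.1 the leading coefficient is explicit because the perturbation's Jacobian is ${\bf C}$ itself. The paper's perturbations have constant coefficients, however, and these do not become small in the Whitney $C^\infty$ topology on a non-compact $U_n$. Your weight $\varepsilon$ is precisely what handles that.
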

\begin{theorem}\label{theorem3}
For any {\color{black}real-analytic} mapping 
$f: U_n\to \mathbb{R}^{n+1}$, let 
$\left\{f_1, f_2, \ldots\right\}$ be a sequence of {pseudo regular} frontals 
such that $\lim_{i\to \infty}{f_i}=f$.    Then, $f$ can be recovered from the 
sequence of Legendre data 
$ 
\left\{
\left\{
\nu_i({\color{black}\bf x}), a_i({\color{black}\bf x})
\right\}_{{\color{black}\bf x}\in Reg({\color{black}\nu}_i)}
\right\}_{i=1, 2, \ldots}$ of $f_i$.       
{\color{black} 
Namely, for any ${\color{black}\bf x}\in U_n$, 
$f({\color{black}\bf x})$ can be concretely described in terms of 
$\nu_i\left({\color{black}\bf x}_j\right), 
a_i\left({\color{black}\bf x}_j\right)$, $\lim_{i\to \infty}$ and 
$\lim_{j\to \infty}$ where 
$\left\{{\color{black}\bf x}_j\ldots\right\}_{j=1, 2, \ldots}$ 
is any sequence of regular points of $\nu_i: Reg(f_i)\to S^n$ 
such that $\lim_{j\to \infty}{\color{black}\bf x}_j={\color{black}\bf x}$.     
}
\end{theorem}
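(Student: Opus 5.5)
Theorem \ref{theorem3} reduces to the pointwise recovery of Theorem \ref{theorem2}(1), followed by the fact that convergence in the Whitney $C^\infty$ topology implies pointwise convergence. First I fix ${\bf x}\in U_n$. For each $i$, $f_i$ is a pseudo regular frontal, so assertion (1) of Theorem \ref{theorem2} applies. It states that $f_i({\bf x})$ is given by an explicit expression in $\nu_i({\bf x}_j)$ and $a_i({\bf x}_j)$ and $\lim_{j\to\infty}$, for any sequence ${\bf x}_j\in Reg(\nu_i)$ with ${\bf x}_j\to{\bf x}$. Such sequences exist. By definition of a pseudo regular frontal, $Reg(\nu_i)$ is dense in $Reg(f_i)$, which is dense in $U_n$, so $Reg(\nu_i)$ is dense in $U_n$.

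Concretely, I would write the formula from Theorem \ref{theorem2}(1) as follows. On the open set $Reg(\nu_i)$, the map $\nu_i$ is a local diffeomorphism onto an open subset of $S^n$. The envelope condition (b) of Definition \ref{definition1} gives $d a_i = f_i\cdot d\nu_i$. Hence the tangential component of $f_i$ is determined by solving the linear system
\[
\frac{\partial a_i}{\partial x_k}({\bf x}_j)=f_i({\bf x}_j)\cdot\frac{\partial \nu_i}{\partial x_k}({\bf x}_j),\qquad k=1,\ldots,n,
\]
together with $f_i({\bf x}_j)\cdot\nu_i({\bf x}_j)=a_i({\bf x}_j)$. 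This system is uniquely solvable because the Jacobian of $\nu_i$ has rank $n$ at ${\bf x}_j$, so the $n+1$ vectors $\partial\nu_i/\partial x_k({\bf x}_j)$ and $\nu_i({\bf x}_j)$ form a basis. By Cramer's rule this yields $f_i({\bf x}_j)=F(\nu_i,a_i)({\bf x}_j)$, and continuity of $f_i$ gives $f_i({\bf x})=\lim_{j\to\infty}F(\nu_i,a_i)({\bf x}_j)$.

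Next, $f_i\to f$ in the Whitney $C^\infty$ topology. In particular $f_i\to f$ uniformly on compacta in the $C^0$ sense, so $f_i({\bf x})\to f({\bf x})$ for each ${\bf x}$. Combining the two steps gives
\[
f({\bf x})=\lim_{i\to\infty}\lim_{j\to\infty}F(\nu_i,a_i)({\bf x}_j^{(i)}),
\]
where for each $i$ the sequence ${\bf x}_j^{(i)}\in Reg(\nu_i)$ converges to ${\bf x}$. This is the required description in terms of $\nu_i({\bf x}_j)$, $a_i({\bf x}_j)$ and the two limits.

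The one point that needs care is well-definedness. The inner limit must not depend on the choice of sequence, and the Gauss mapping $\nu_i$ on $Reg(f_i)$ is only defined up to sign on each connected component. Both issues are resolved by Theorem \ref{theorem2}(1) itself. The limit equals $f_i({\bf x})$, independent of choices, and replacing $(\nu_i,a_i)$ by $(-\nu_i,-a_i)$ leaves the solution of the linear system unchanged. So no diagonal argument is needed: the inner limit is exact for each $i$, and only the outer limit uses the convergence $f_i\to f$.
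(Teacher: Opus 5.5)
Your proposal is correct and follows the paper's proof of Theorem~\ref{theorem3}. Fix ${\bf x}$, recover $f_i({\bf x})$ as a limit over ${\bf x}_j\in Reg(\nu_i)$ via Theorem~\ref{theorem2}(1), and conclude with $f({\bf x})=\lim_{i\to\infty}f_i({\bf x})$.

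There are two minor differences from the paper's proof. First, you solve the linear system with rows $\nu_i,\partial\nu_i/\partial x_k$ directly, instead of using the spherical coordinates $\theta$ and the coefficients $b_k$. Second, your sequence ${\bf x}^{(i)}_j$ may depend on $i$, whereas the paper asserts one sequence common to all $i$, which would need a Baire-category argument that it does not give.
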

\section{Proof of Theorem 1}\label{section3} 

Let $\nu^n\colon W=\ch{\underbrace{]-\pi,\pi[\times \cdots \times ]-\pi,\pi[}_{n\text{-tuples}}}\to S^n$ be the mapping defined by
	\[\nu^n({\color{black}\boldsymbol{\theta}})=\left(\prod_{i=1}^n\cos \theta_i,\sin \theta_n\prod_{i=1}^{n-1}\cos \theta_i,\sin \theta_{n-1} \prod^{n-2}_{i=1}\cos \theta_i,\dots,\sin \theta_2 \cos \theta_1,\sin \theta_1\right).\]
This mapping is a homeomorphism on its image, thus defining a parametrisation of the $n$-sphere.
If we identify $\mathbb{R}^{n+1}$ with $T_{\nu^n({\color{black}\boldsymbol{\theta}})}\mathbb{R}^{n+1}$ 
for some ${\color{black}\boldsymbol{\theta}} \in W$, 
it follows from the properties of $S^n$ that 
$\nu^n({\color{black}\boldsymbol{\theta}})$ \text{is orthogonal to} $T_{\nu^n({\color{black}\boldsymbol{\theta}})}S^n$.
We also define the vector fields $\tilde\mu_1,\dots,\tilde\mu_n\colon W \to TS^n$ along $\nu^n$ given by 
	\[\tilde\mu_i({\color{black}\boldsymbol{\theta}})
	=d\nu^n_{\color{black}\boldsymbol{\theta}}\left(\frac{\partial}{\partial \theta_i}\right)=\frac{\partial \nu^n}{\partial \theta_i}.\]
Note that since $\nu^n$ is a unit vector field,
	\[1=|\nu^n({\color{black}\boldsymbol{\theta}})|^2
	=\nu^n({\color{black}\boldsymbol{\theta}})\cdot 
	\nu^n({\color{black}\boldsymbol{\theta}}) \implies 0=
	2\frac{\partial \nu^n}{\partial \theta_i}({\color{black}\boldsymbol{\theta}})\cdot\nu^n({\color{black}\boldsymbol{\theta}})
	=2\ch{\tilde{\mu_i}}({\color{black}\boldsymbol{\theta}})
	\cdot\nu^n({\color{black}\boldsymbol{\theta}})\]
and $\tilde\mu_i({\color{black}\boldsymbol{\theta}})$ 
\text{are orthogonal to} $\nu^n({\color{black}\boldsymbol{\theta}})$.

\begin{lemma}
	For $1 \leq i < j \leq n$ and ${\color{black}\boldsymbol{\theta}} 
	\in \Reg(\nu)$, 
	$\tilde\mu_i({\color{black}\boldsymbol{\theta}})
	\cdot\tilde \mu_j({\color{black}\boldsymbol{\theta}})=0$.
	Therefore, the set $\{\nu^n({\color{black}\boldsymbol{\theta}}),\tilde 
	\mu_1({\color{black}\boldsymbol{\theta}}),
	\dots,\tilde \mu_n({\color{black}\boldsymbol{\theta}})\}$ 
	is an orthogonal basis for $\mb{R}^{n+1}$.
	Moreover,
	\begin{align*}
		|\tilde \mu_1({\color{black}\boldsymbol{\theta}})|
		=1; && |\tilde \mu_2({\color{black}\boldsymbol{\theta}})|
		=\cos\theta_1; && \dots && |\tilde \mu_n({\color{black}\boldsymbol{\theta}})
		|=
		\cos\theta_1\cdots\cos\theta_{n-1}.
	\end{align*}
\end{lemma}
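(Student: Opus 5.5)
Since $\nu^n$ has a nested product structure, the natural plan is induction on $n$, writing $\nu^n$ in terms of $\nu^{n-1}$. Set $\boldsymbol{\theta}=(\theta_1,\boldsymbol{\theta}')$ with $\boldsymbol{\theta}'=(\theta_2,\dots,\theta_n)$, and re-index $\boldsymbol{\theta}'$ as the variables of $\nu^{n-1}$. Inspecting the formula gives
\[
\nu^n(\boldsymbol{\theta})=\left(\cos\theta_1\,\nu^{n-1}(\boldsymbol{\theta}'),\ \sin\theta_1\right),
\]
where the first $n$ coordinates are $\cos\theta_1$ times $\nu^{n-1}(\boldsymbol{\theta}')$ and the last coordinate is $\sin\theta_1$. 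The base case $n=1$ is $\nu^1(\theta_1)=(\cos\theta_1,\sin\theta_1)$. It gives $\tilde\mu_1=(-\sin\theta_1,\cos\theta_1)$, which has norm $1$ and is orthogonal to $\nu^1$.

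From the decomposition we compute
\[
\tilde\mu_1=\left(-\sin\theta_1\,\nu^{n-1}(\boldsymbol{\theta}'),\ \cos\theta_1\right),\qquad
\tilde\mu_j=\left(\cos\theta_1\,\tilde\mu'_{j-1}(\boldsymbol{\theta}'),\ 0\right)\quad (j\ge 2),
\]
where $\tilde\mu'_k$ are the vector fields of $\nu^{n-1}$.

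\emph{Norms.} Since $|\nu^{n-1}|=1$, we get $|\tilde\mu_1|^2=\sin^2\theta_1+\cos^2\theta_1=1$. For $j\ge2$ we get $|\tilde\mu_j|=|\cos\theta_1|\,|\tilde\mu'_{j-1}|$, and by induction this equals $|\cos\theta_1|\cos\theta_2\cdots\cos\theta_{j-1}$. I will need to restrict to the domain where the cosines are positive, i.e.\ $|\theta_i|<\pi/2$ (the paper's $]-\pi,\pi[$ presumably intends this, or the norms should carry absolute values). This matches the claimed formula.

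\emph{Orthogonality.} For $i=1<j$ we have
\[
\tilde\mu_1\cdot\tilde\mu_j=-\sin\theta_1\cos\theta_1\,\nu^{n-1}\cdot\tilde\mu'_{j-1}=0,
\]
because the tangent vectors $\tilde\mu'_{j-1}$ are orthogonal to $\nu^{n-1}$, as already shown in the text for any unit vector field. For $2\le i<j$ we have $\tilde\mu_i\cdot\tilde\mu_j=\cos^2\theta_1\,\tilde\mu'_{i-1}\cdot\tilde\mu'_{j-1}$, which vanishes by the induction hypothesis. Orthogonality to $\nu^n$ was already established before the lemma.

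\emph{Basis.} The norms are nonzero exactly when all $\cos\theta_i\neq0$, i.e.\ when $\boldsymbol{\theta}$ is a regular point of $\nu^n$; this is how I read the hypothesis $\boldsymbol{\theta}\in\Reg(\nu)$. There the $n+1$ vectors $\nu^n,\tilde\mu_1,\dots,\tilde\mu_n$ are nonzero and pairwise orthogonal, so they are linearly independent and form an orthogonal basis of $\mathbb{R}^{n+1}$.

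The only real obstacle is bookkeeping: verifying the recursive decomposition of $\nu^n$ against the stated coordinate ordering, and correctly shifting the indices of $\nu^{n-1}$'s angles. Everything else is a direct computation.
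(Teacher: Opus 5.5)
Your argument is correct. Like the paper, it is an induction on $n$, but it uses a different recursion. You peel off the outermost angle, $\nu^n(\boldsymbol{\theta})=(\cos\theta_1\,\nu^{n-1}(\theta_2,\dots,\theta_n),\sin\theta_1)$. The paper instead appends a new innermost angle $\phi$ that splits the first coordinate: $\nu^{n+1}(\boldsymbol{\theta},\phi)=\nu^n_1(\boldsymbol{\theta})(\cos\phi,\sin\phi,0,\dots,0)+(0,0,\nu^n_2(\boldsymbol{\theta}),\dots,\nu^n_{n+1}(\boldsymbol{\theta}))$.

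In the paper's version the old fields keep their norms and mutual dot products, because $\cos^2\phi+\sin^2\phi=1$. The new field $\nu^n_1(\boldsymbol{\theta})(-\sin\phi,\cos\phi,0,\dots,0)$ is orthogonal to them by the planar rotation structure. In your version every old field is simply rescaled by $\cos\theta_1$ with an index shift. Orthogonality of the new field $\tilde\mu_1$ then comes from $\nu^{n-1}\cdot\tilde\mu'_k=0$, which is proved just before the lemma, so your inductive step is a one-line factorization.

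Your sign remark is a real improvement: the paper writes $\sqrt{(\nu^n_1)^2}=\nu^n_1$ and $|\tilde\mu_2|=\cos\theta_1$ on $]-\pi,\pi[^n$, which fails where a cosine is negative. Two small corrections, though:
\begin{itemize}
\item Only $\cos\theta_1,\dots,\cos\theta_{n-1}$ enter the norms, so you need $|\theta_i|<\pi/2$ only for $i\le n-1$. With $\theta_n\in\,]-\pi,\pi[$, this restricted domain is also one on which $\nu^n$ is actually injective, which $]-\pi,\pi[^n$ is not for $n\ge 2$; your recursion makes this evident.
\item Regularity of $\nu^n$ is equivalent to $\cos\theta_1\cdots\cos\theta_{n-1}\neq0$, not to all $\cos\theta_i\neq0$. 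This does not affect your basis argument, since at a regular point the Jacobian columns $\tilde\mu_j$ are automatically nonzero.
\end{itemize}
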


\begin{proof}
	We first note that the vector fields $\nu^n$ can be written recursively as
		\[\nu^{n+1}({\color{black}\boldsymbol{\theta}},\phi)
		=\nu^n_1({\color{black}\boldsymbol{\theta}})
		(\cos \phi, \sin \phi,0\dots,0)
		+(0,0,\nu^n_2({\color{black}\boldsymbol{\theta}}),
		\dots,\nu^n_n({\color{black}\boldsymbol{\theta}})),\]
	where ${\color{black}\boldsymbol{\theta}} 
	\in \mathbb{R}^n$ and $\phi \in \mathbb{R}$.
	We then proceed by induction over $n$.
	
	For $n=2$, we have
		\[\nu^2(\theta_1,\theta_2)=(\cos\theta_1\cos\theta_2,\cos\theta_1\sin\theta_2,\sin\theta_1),\]
	from which {\color{black}it} follows that
	\begin{align*}
		\tilde\mu_1(\theta_1,\theta_2)&=\frac{\partial \nu^2}{\partial \theta_1}(\theta_1,\theta_2)=(-\sin\theta_1\cos\theta_2,-\sin\theta_1\sin\theta_2,\cos\theta_1); \\
		\tilde\mu_2(\theta_1,\theta_2)&=\frac{\partial \nu^2}{\partial \theta_2}(\theta_1,\theta_2)=(-\cos\theta_1\sin\theta_2,\cos\theta_1\cos\theta_2,0).
	\end{align*}
	We then compute their modules and the dot products among them:
	\begin{align*}
		|\tilde\mu_1|&=\sqrt{\sin^2\theta_1\cos^2\theta_2+\sin^2\theta_1\sin^2\theta_2+\cos^2\theta_1}=\sqrt{\sin^2\theta_1+\cos^2\theta_1}=1;\\
		|\tilde\mu_2|&=\sqrt{\cos^2\theta_1\sin^2\theta_2+\cos^2\theta_1\cos^2\theta_2}=\cos\theta_1;\\
		\tilde\mu_1\cdot\tilde\mu_2&=\sin\theta_1\cos\theta_1\sin\theta_2\cos\theta_2-\sin\theta_1\cos\theta_1\sin\theta_2\cos\theta_2=0.
	\end{align*}
	
	Now assume the statement to be true for some $n > 1$.
	We have that for $i=1,\dots,n$,
	\begin{align*}
		\tilde\mu_i({\color{black}\boldsymbol{\theta}},\phi)&=\frac{\partial \nu^{n+1}}{\partial \theta_i}({\color{black}\boldsymbol{\theta}},\phi)=\frac{\partial \nu^n_1}{\partial \theta_i}({\color{black}\boldsymbol{\theta}})
		(\cos\phi,\sin\phi,0,\dots,0)+\left(0,0,\frac{\partial \nu^n_2}{\partial \theta_i}({\color{black}\boldsymbol{\theta}}),\dots,\frac{\partial \nu^n_n}{\partial \theta_i}({\color{black}\boldsymbol{\theta}})\right);\\
		\tilde\mu_{n+1}({\color{black}\boldsymbol{\theta}},\phi)&
		=\frac{\partial \nu^{n+1}}{\partial \phi}({\color{black}\boldsymbol{\theta}},
		\phi)=\nu^n_1({\color{black}\boldsymbol{\theta}})
		(-\sin\phi,\cos\phi,0,\dots,0).\\
	\end{align*}
	Computing their modules and dot products gives us
	\begin{align*}
		|\tilde\mu_1|&=\sqrt{\left(\frac{\partial \nu^n_1}{\partial \theta_1}\right)^2(\cos^2\phi+\sin^2\phi)+\left(\frac{\partial \nu^n_2}{\partial \theta_1}\right)^2+\dots+\left(\frac{\partial \nu^n_n}{\partial \theta_1}\right)^2}=\left|\frac{\partial \nu^n}{\partial \theta_1}\right|=1;\\
		|\tilde\mu_i|&=\sqrt{\left(\frac{\partial \nu^n_1}{\partial \theta_i}\right)^2(\cos^2\phi+\sin^2\phi)+\left(\frac{\partial \nu^n_2}{\partial \theta_i}\right)^2+\dots+\left(\frac{\partial \nu^n_n}{\partial \theta_i}\right)^2}=\\
			&=\left|\frac{\partial \nu^n}{\partial \theta_i}\right|=\cos\theta_1\dots\cos\theta_{i-1};\\
		|\tilde\mu_{n+1}|&=\sqrt{\nu^n_1({\color{black}\boldsymbol{\theta}})^2(\sin^2\phi+\cos^2\phi)}=\nu^n_1=\cos\theta_1\dots\cos\theta_n;\\
		\tilde\mu_i\cdot\tilde\mu_j&=\frac{\partial \nu^n_1}{\partial \theta_i}\frac{\partial \nu^n_1}{\partial \theta_j}(\cos^2\phi+\sin^2\phi)+\frac{\partial \nu^n_2}{\partial \theta_i}\frac{\partial \nu^n_2}{\partial \theta_j}+\dots+\frac{\partial \nu^n_n}{\partial \theta_i}\frac{\partial \nu^n_n}{\partial \theta_j}=\frac{\partial \nu^n}{\partial \theta_i}\cdot \frac{\partial \nu^n}{\partial \theta_j}=0\\
		\tilde\mu_{n+1}\cdot\tilde\mu_j&=\frac{\partial \nu^n_1}{\partial \theta_j}\nu^n_1(\sin\phi\cos\phi-\cos\phi\sin\phi)=0.
	\end{align*}
	The statement then follows.
\end{proof}

From all this, we get the following

\begin{proposition}\label{nu mu orthonormal basis}
	The set $\{\nu^n({\color{black}\boldsymbol{\theta}}),
	\hat\mu_1({\color{black}\boldsymbol{\theta}}),
	\dots,\hat\mu_n({\color{black}\boldsymbol{\theta}})\}$ 
	is an orthonormal basis for $T_{\nu({\color{black}\boldsymbol{\theta}})}
	\mathbb{R}^{n+1}$, where $\hat\mu_i=\tilde\mu_i/|\tilde\mu_i|$ for $i=1,\dots,n$.
\end{proposition}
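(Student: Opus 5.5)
The plan is to read the Proposition off the preceding Lemma together with the orthogonality of $\nu^n$ to each $\tilde\mu_i$, which was shown just before the Lemma. Fix $\boldsymbol{\theta}\in W$ with $\cos\theta_1\cdots\cos\theta_{n-1}\neq 0$. This is exactly the set where $\nu^n$ has full rank, which is the meaning of $\boldsymbol{\theta}\in\Reg(\nu)$ in the Lemma.

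First, by the norm formulas of the Lemma, $|\tilde\mu_i(\boldsymbol{\theta})|=\cos\theta_1\cdots\cos\theta_{i-1}$, with $|\tilde\mu_1|=1$. This product is a sub-product of $\cos\theta_1\cdots\cos\theta_{n-1}$, so it is nonzero, and hence $\hat\mu_i=\tilde\mu_i/|\tilde\mu_i|$ is well defined and has norm one. We also have $|\nu^n(\boldsymbol{\theta})|=1$, since $\nu^n$ takes values in $S^n$.

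Second, normalising each vector by a nonzero positive scalar does not change any of the pairwise dot products being zero. The Lemma gives $\tilde\mu_i\cdot\tilde\mu_j=0$ for $i<j$, and the computation $\tilde\mu_i\cdot\nu^n=0$ preceding the Lemma gives the remaining orthogonality relations. Therefore $\{\nu^n,\hat\mu_1,\dots,\hat\mu_n\}$ is an orthonormal set.

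Third, these are $n+1$ orthonormal vectors, so they are linearly independent. Under the identification $T_{\nu(\boldsymbol{\theta})}\mathbb{R}^{n+1}\cong\mathbb{R}^{n+1}$ they form a basis, which gives the statement.

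The only point needing care is the domain. On all of $W=]-\pi,\pi[^n$ the factors $\cos\theta_k$ may vanish (for instance at $\theta_1=\pm\pi/2$), so the statement must be read on the regular set, as in the Lemma. I would make this explicit by noting that the Jacobian of $\nu^n$ has columns $\tilde\mu_i$, whose Gram determinant is $\prod_i|\tilde\mu_i|^2$. So regularity is equivalent to all the norms being nonzero, and this equivalence is the only step that is not immediate.
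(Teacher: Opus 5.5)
Your proposal is correct and follows the paper's route: the paper gives no separate argument ("From all this, we get the following") and reads the Proposition off the Lemma's orthogonality and norm formulas together with $\tilde\mu_i\cdot\nu^n=0$, normalizing each $\tilde\mu_i$ exactly as you do. Your explicit restriction to points with $\cos\theta_1\cdots\cos\theta_{n-1}\neq 0$ (equivalently, regular points of $\nu^n$) is a useful clarification the paper leaves implicit. Strictly speaking, the norms there are $|\cos\theta_1\cdots\cos\theta_{i-1}|$, since the cosines can be negative on $W$.
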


\begin{proof}[Proof of Theorem \ref{theorem1}]
	Given ${\color{black}\bf x} \in U_n$, 
	we identify $\mb{R}^{n+1}$ with $T_{f({\color{black}\bf x})}
	\mb{R}^{n+1}$.
	Let $\nu\colon \Reg(f) \to S^n$ be the unit vector field along $f$ and $a=f\cdot \nu$.	
	Since $|\nu({\color{black}\bf x})|=1$ 
	for all ${\color{black}\bf x} \in U_n$, we can choose $\theta_1,\dots,\theta_n\colon \Reg(f) \to W$ such that
		\[\nu=\nu\circ (\theta_1,\dots,\theta_n).\]
	Moreover, $\nu$ verifies the identity $\nu({\color{black}\bf x})
	\cdot df_{\color{black}\bf x}({\color{black}\bf v})=0$ 
	for all ${\color{black}\bf v} \in T_{\color{black}\bf x}\mb{R}^n$ and 
	${\color{black}\bf x} \in \Reg(f)$, hence
		\[\frac{\partial a}{\partial x_i}({\color{black}\bf x})
		=\frac{\partial f}{\partial x_i}({\color{black}\bf x})
		\cdot \nu({\color{black}\bf x})+f({\color{black}\bf x})
		\cdot \frac{\partial \nu}{\partial x_i}({\color{black}\bf x})
		=f({\color{black}\bf x})\cdot 
		\frac{\partial \nu}{\partial x_i}({\color{black}\bf x}).\]
	Using the chain rule, we obtain the following system of differential equations:
	\[
		\frac{\partial a}{\partial x_i}=f\cdot \frac{\partial \nu^n}{\partial x_i}
		=f\cdot \frac{\partial \nu^n}{\partial \theta_1}\frac{\partial \theta_1}{\partial x_i}+\dots+f\cdot 	\frac{\partial \nu^n}{\partial \theta_n}\frac{\partial \theta_n}{\partial x_i}=b_1\frac{\partial \theta_1}{\partial x_i}+\dots+b_n\frac{\partial \theta_n}{\partial x_i},
	\]
	which can be written in matrix form as
	\begin{align}\label{system b1 bn}
		\begin{pmatrix}
			\dfrac{\partial a}{\partial x_1} \\ \vdots \\ \dfrac{\partial a}{\partial x_n}	
		\end{pmatrix}
		=
		\begin{pmatrix}
			\dfrac{\partial \theta_1}{\partial x_1} & \hdots & \dfrac{\partial \theta_n}{\partial x_1}	\\
			\vdots				& \ddots & \vdots				\\
			\dfrac{\partial \theta_1}{\partial x_n} & \hdots & \dfrac{\partial \theta_n}{\partial x_n}
		\end{pmatrix}
		\begin{pmatrix}
			b_1 \\ \vdots \\ b_n
		\end{pmatrix}
	\end{align}

	{\color{black}Notice that} the mapping 
	$d\nu_{\color{black}\bf x}\colon T_{\color{black}\bf x}\Reg(f) 
	\to T_{\nu({\color{black}\bf x})}S^n$ 
	is a monomorphism {\color{black}for any ${\bf x} \in \Reg(\nu)$}.    
	By the chain rule, 
		\[d\nu_{\color{black}\bf x}=
		d\nu^n_{\theta({\color{black}\bf x})}\circ d\theta_{\color{black}\bf x},\]
	hence $d\theta_{\color{black}\bf x}\colon 
	T_{\color{black}\bf x}\Reg(\nu) \to 
	T_{\theta({\color{black}\bf x})}W$ is also a monomorphism.
	Since $\dim T_{\color{black}\bf x}\Reg(\nu) 
	= \dim T_{\theta({\color{black}\bf x})}W$, 
	the Grassman formula implies that $\operatorname{rk} 
	d\theta_{\color{black}\bf x}=\dim T_{\color{black}\bf x}U$, 
	so $d\theta_{\color{black}\bf x}$ is an isomorphism and the coefficient matrix of Equation \eqref{system b1 bn} is invertible at ${\color{black}\bf x}$.
	It follows that the system of equations has a unique solution $b_1,\dots,b_n\colon \Reg(\nu) \to \mb{R}$.

	If we set $\mu_i=\hat\mu_i\circ \theta$ for $i=1,\dots,n$, it follows from Proposition \ref{nu mu orthonormal basis} that we can write 
		\[
		f({\color{black}\bf x})
		=f({\color{black}\bf x})\cdot 
		\nu({\color{black}\bf x})\nu({\color{black}\bf x})
		+f({\color{black}\bf x})\cdot 
		\mu_1({\color{black}\bf x})\mu_1({\color{black}\bf x})
		+\dots+f({\color{black}\bf x})\cdot 
		\mu_n({\color{black}\bf x}) \mu_n({\color{black}\bf x}).
		\] 
	On the one hand, 
	we know that $f({\color{black}\bf x})\cdot 
	\nu({\color{black}\bf x})=
	a({\color{black}\bf x})$ by definition of height function.
	On the other hand, for $j=1,\dots,n$,
		\[f({\color{black}\bf x})\cdot\mu_j({\color{black}\bf x})
		=f({\color{black}\bf x})\cdot \hat 
		\mu_j(\theta({\color{black}\bf x}))
		=f({\color{black}\bf x})\cdot 
		\frac{\tilde \mu_j(\theta({\color{black}\bf x}))}
		{|\tilde \mu_j(\theta({\color{black}\bf x}))|}
		=\frac{f({\color{black}\bf x})}
		{|\tilde \mu_j(\theta({\color{black}\bf x}))|}
		\cdot\frac{\partial \nu^n}{\partial \theta_j}
		(\theta({\color{black}\bf x}))
		=\frac{b_j({\color{black}\bf x})}
		{|\tilde\mu_j(\theta({\color{black}\bf x}))|}.\]
	Therefore,
	\begin{equation}\label{equation to recover f}
		f({\color{black}\bf x})
		=a({\color{black}\bf x})\nu({\color{black}\bf x})
		+\frac{b_1({\color{black}\bf x})}{|\tilde\mu_1(\theta({\color{black}\bf x}))|}
		\mu_1({\color{black}\bf x})+\dots
		+\frac{b_n({\color{black}\bf x})}{|\tilde\mu_n(\theta({\color{black}\bf x}))|}
		\mu_n({\color{black}\bf x})
	\end{equation}
	for any ${\color{black}\bf x} \in \Reg(\nu)$.
	Note that every function on the right-hand side of this identity can be computed entirely in terms of $a$ and $\nu$.
	
	The set $\Reg(\nu)$ is \ch{open and} dense in $\Reg(f)$ by definition of pseudo regular frontal, and since $\Reg(f)$ is \ch{open and} dense in $U_n$, $\Reg(\nu)$ is \ch{open and} dense in $U_n$.
	\text{Since they are continuous}, we can uniquely extend the mapping $\theta\colon \Reg(f) \to W$ and the functions $b_1,\dots,b_n\colon \Reg(\nu) \to \mb{R}$ to $U_n$, which is the domain of $f$.
	We conclude that the identity \eqref{equation to recover f} holds in $U_n$, so it can be written entirely in terms of the Legendre data, as stated.
\end{proof}

\begin{remark}
	Most of the arguments in the proof above can be carried out for any $f\colon U_n \to \mb{R}^{n+1}$ regardless of whether $\Reg(\nu)$ is dense in $\Reg(f)$ or not.
	The only obstruction to this argument lies when one wishes to find the functions $b_1,\dots,b_n$ that verify the equation
		\[da=b_1\,d\theta_1+\dots+b_n\,d\theta_n,\]
	as the solution is only guaranteed to be unique in $\Reg(\nu)$.
	If we denote by $\Sigma^\ell(\nu)$ 
	the set of points ${\color{black}\bf x}\in \Reg(f)$ 
	where $d\nu_{\color{black}\bf x}$ 
	has corank $\ell$, then $d\theta_{\color{black}\bf x}$ 
	has corank $\ell$ and there are indices $1 \leq i_1 < i_2 < \dots < i_\ell \leq n$ such that $b_{i_1}, \dots, b_{i_\ell}\colon \Sigma^\ell(\nu) \to \mb{R}$ can be chosen to be any function.
	 Being able to recover $f$ then depends on being able to choose the appropriate functions $b_{i_1}, \dots, b_{i_\ell}$. In other words, for non pseudo regular frontals there might be infinitely many different mappings sharing the same Legendre data.
	
	There are two ways to solve this conundrum: on the one hand, Theorem 2 claims that the set of pseudo regular frontals $U_n \to \mb{R}^{n+1}$ are dense in ${\color{black}A}(U_n,\mb{R}^{n+1})$, meaning that one can choose a sequence of pseudo regular frontals $f_k\colon U_n \to \mb{R}^{n+1}$ which converge to $f$ pointwise when $k$ goes to infinity (see \S 5 below).
	On the other hand, one can use additional information about $f$, such as the lowest rank $df_x$ attains in $U_n$ (e.g. the cuspidal edge from Example \ref{example1} Item 3 has at least rank $1$, so it can be written in the form
		\[(x,y) \mapsto (x,p(x,y),q(x,y))\]
	by taking suitable coordinates in the source and target).
\end{remark}

\section{Proof of Theorem \ref{theorem2}}\label{section4}
{\color{black} 
\subsection{Proof of the assertion (1) of Theorem \ref{theorem2}} 
\label{subsection4.1}
The proof of Theorem \ref{theorem1} given in Section \ref{section3} 
works well even for pseudo regular 
frontals.   This completes the proof.    
\hfill $\Box$
\subsection{Proof of the assertion (2) of Theorem \ref{theorem2}} 
\label{subsection4.2}   
Let $RM\left(U_n, \mathbb{R}^{n+1}\right)$ be the set 
consisting of {\color{black}real-analytic} mappings $f: U_n\to \mathbb{R}^{n+1}$ 
such that $Reg(f)$ is dense in $U_n$.     The set 
$RM\left(U_n, \mathbb{R}^{n+1}\right)$ is endowed with the Whitney 
$C^\infty$ topology.   
Then,  
$RM\left(U_n, \mathbb{R}^{n+1}\right)$ is a topological subspace 
of $A\left(U_n, \mathbb{R}^{n+1}\right)$ and 
$PRF\left(U_n, \mathbb{R}^{n+1}\right)$ is a topological subspace of 
$RM\left(U_n, \mathbb{R}^{n+1}\right)$.   Namely, the following holds.  
\[
PRF\left(U_n, \mathbb{R}^{n+1}\right)\subset 
RM\left(U_n, \mathbb{R}^{n+1}\right)\subset 
A\left(U_n, \mathbb{R}^{n+1}\right).
\]   
It is clear that the following two lemma\ch{s} 
show the assertion (2) of Theorem \ref{theorem2}.  
\begin{lemma}\label{lemma4.1}
$RM\left(U_n, \mathbb{R}^{n+1}\right)$ is dense in 
 \ch{$A(U_n,\mb{R}^{n+1})$}.  
\end{lemma}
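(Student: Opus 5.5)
Let $f\in A(U_n,\mathbb{R}^{n+1})$ and fix a Whitney $C^\infty$ neighbourhood $\mathcal{N}$ of $f$. The plan is to find a nearby analytic map that is an immersion at \emph{at least one point} of each connected component of $U_n$. This suffices because of real-analyticity. For $g$ real-analytic, consider on $U_n$ the function
\[
J_g=\sum_{|I|=n}\left(\det \left(dg\right)_I\right)^2,
\]
the sum of squares of the $n\times n$ minors of the Jacobian. Then $Reg(g)=\{J_g\neq 0\}$. Since $J_g$ is real-analytic, on a connected component of $U_n$ either $J_g\equiv 0$ or its zero set has empty interior. So $Reg(g)$ is dense in $U_n$ as soon as $J_g$ is not identically zero on any component.

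The first step is to perturb by linear maps. Consider $f_A({\bf x})=f({\bf x})+A{\bf x}$ with $A$ an $(n+1)\times n$ real matrix. Each $f_A$ is real-analytic. On a fixed component $C$, pick a point ${\bf x}_C\in C$. The set of $A$ for which $df_{{\bf x}_C}+A$ has rank $n$ is open and dense, with complement an algebraic set. Hence for $A$ outside a countable union of nowhere-dense closed sets (one per component; there are countably many) we get $J_{f_A}\not\equiv 0$ on every component. By Baire, such $A$ exist arbitrarily close to $0$.

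The main obstacle is that $U_n$ may be unbounded and the Whitney topology controls $f_A-f$ with a positive error function $\varepsilon({\bf x})$ that may decay at infinity, whereas $A{\bf x}$ grows. To fix this I will replace $A{\bf x}$ by a perturbation of the form $\sum_k c_k h_k({\bf x})$. Here $h_k:U_n\to\mathbb{R}^{n+1}$ are real-analytic and, by Whitney's approximation / Grauert-type analytic approximation of a smooth compactly supported-like bump, chosen so that all derivatives up to order $k$ are bounded by $\varepsilon$ on $U_n$. They are also chosen so that the family $\{dh_k({\bf x}_C)\}$ spans the $(n+1)\times n$ matrices at each chosen point. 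A simpler route that I will try first is to multiply by a decaying analytic factor: take $h({\bf x})=e^{-\phi({\bf x})}A{\bf x}$, where $\phi$ is real-analytic and grows fast enough (for instance a Whitney analytic approximation of a smooth exhaustion function plus correction), so that $h$ and its derivatives are dominated by $\varepsilon$. The differential of $e^{-\phi}A{\bf x}$ at ${\bf x}_C$ is $e^{-\phi}(A-A{\bf x}_C\,d\phi^{T})$. This is a linear bijection in $A$ once $1-d\phi({\bf x}_C){\bf x}_C\neq 0$, which can be arranged by translating coordinates so that ${\bf x}_C$ avoids that hyperplane. Then genericity in $A$ works exactly as before.

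Finally, scaling $A\to tA$ with $t\to 0$ keeps the perturbation inside $\mathcal{N}$ while preserving the countable open-dense condition for generic $t$. So $f+h_{tA}\in RM(U_n,\mathbb{R}^{n+1})\cap\mathcal{N}$, proving density.
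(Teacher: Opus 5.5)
Your argument is correct in substance, and its skeleton is the paper's. $Reg(g)$ is the complement of the zero set of the sum of squares of the $n\times n$ minors (the paper's $\Phi_g$). The identity theorem reduces density of $Reg(g)$ to producing one regular point in each connected component. A perturbation that is generic in finitely many parameters does this simultaneously on the countably many components; the paper uses Lebesgue measure zero where you use Baire.

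The genuine difference is how the perturbation is kept close to $f$. The paper uses the undamped linear perturbation $f+C{\bf x}$ with $\sum c_{(j,i)}^2<1/\ell^2$ and asserts $f+C_0[\ell]\to f$. As you point out, this fails in the Whitney $C^\infty$ topology that the paper puts on $A(U_n,\mathbb{R}^{n+1})$, and not only for unbounded $U_n$. On the unit ball with $\varepsilon({\bf x})=1-|{\bf x}|$, no nonzero linear map satisfies $|C{\bf x}|<\varepsilon({\bf x})$. So the paper's argument, which is entirely elementary, really proves density in the compact-open $C^\infty$ topology. That suffices for the pointwise limits used in Theorem 3. Your damped perturbation proves the lemma as literally stated, at the price of invoking Whitney's analytic approximation theorem.

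Two points in your sketch should be tightened.

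First, saying that $\phi$ ``grows fast enough'' does not by itself bound the derivatives of $e^{-\phi}$, since these involve the derivatives of $\phi$. A cleaner route is to take a smooth $\sigma>0$, built from a locally finite partition of unity with tiny coefficients, such that $|D^\alpha(\sigma\,A({\bf x}-{\bf x}_C))|<\varepsilon/2$ on each component $C$ for $|\alpha|\le k$ and $\|A\|\le 1$. Then replace $e^{-\phi}$ by a real-analytic $\eta$ that is close to $\sigma$ in the fine $C^k$ sense. Also require $|\eta-\sigma|<\sigma/2$, so that $\eta({\bf x}_C)\neq 0$.

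Second, the condition $1-d\phi({\bf x}_C){\bf x}_C\neq 0$ and the translation trick disappear if you center the perturbation on each component, taking $h|_C=\eta\,A_C({\bf x}-{\bf x}_C)$. Analyticity is local, so the matrices $A_C$ may be chosen independently on different components. Then $dh_{{\bf x}_C}=\eta({\bf x}_C)A_C$, and any small $A_C$ making $df_{{\bf x}_C}+\eta({\bf x}_C)A_C$ of rank $n$ will do, so no Baire argument is needed.
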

\begin{lemma}\label{lemma4.2}
$PRF\left(U_n, \mathbb{R}^{n+1}\right)$ is dense in 
$RM\left(U_n, \mathbb{R}^{n+1}\right)$.  
\end{lemma}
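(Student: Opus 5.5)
Lemma \ref{lemma4.2} asks us to approximate any $f\in RM(U_n,\mathbb{R}^{n+1})$ by pseudo regular frontals. The plan is to perturb $f$ only in its last component. For any $f\in RM$, condition (1) of Definition \ref{pseudo regular frontal} already holds, since $Reg(f)$ is dense. So the work is to produce arbitrarily close $g$ that still have $Reg(g)$ dense and for which $Reg(\nu_g)$ is dense in $Reg(g)$. By real-analyticity, on each connected component of $Reg(g)$ the Jacobian determinant of $\nu_g$ is a real-analytic function, computed with respect to a local chart of $S^n$. It is therefore either identically zero on that component or nonvanishing on an open dense subset of it. Hence it suffices to show that, after a small perturbation, the Gauss map Jacobian is not identically zero on any component of $Reg(g)$. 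Equivalently, the Gauss–Kronecker curvature (the determinant of the second fundamental form) of the immersed pieces must not vanish identically.

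The perturbation I would use is
\[
g_\varepsilon = f+\varepsilon\,(0,\dots,0,\rho),\qquad \rho({\bf x})=|{\bf x}|^2,
\]
or more generally a real-analytic polynomial perturbation $f+\varepsilon P$, with $P$ ranging over a finite-dimensional family of quadratic polynomial maps. To control closeness in the Whitney $C^\infty$ topology on a possibly noncompact $U_n$, I would replace the constant $\varepsilon$ by a positive real-analytic function $\varepsilon({\bf x})$ decaying fast enough at the boundary of $U_n$. Such a function exists by Whitney/Grauert approximation of a smooth positive function.

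The key steps are as follows.
\begin{enumerate}
\item Show that $Reg(g)$ is dense for small perturbations. The maximal minors of $dg$ are real-analytic in $({\bf x},\varepsilon)$, and at $\varepsilon=0$ they are not all identically zero on any component of $U_n$. Hence for all but countably many (in fact, all sufficiently small nonzero) parameter values they remain not identically zero, so the regular set stays open and dense.
\item Show that the Gauss map of $g$ has a Jacobian that is not identically zero. Fix a point ${\bf x}_0\in Reg(f)$ and locally write $f(U)$ as a graph over its tangent hyperplane. Adding $\varepsilon P$ with $P$ a generic quadratic changes the second fundamental form at ${\bf x}_0$ by $\varepsilon$ times a prescribed quadratic form, up to first order. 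Its determinant is a polynomial in $\varepsilon$, and this polynomial is nonzero for a suitable choice of $P$ from a finite family. Consequently, for all but finitely many $\varepsilon$ near $0$, the Jacobian of $\nu$ is nonzero near ${\bf x}_0$.
\item Make this uniform over all components. $Reg(f)$ has at most countably many components, and on each component only countably many (bad) parameter values are excluded. So a parameter chosen outside a countable set, arbitrarily close to $0$, works simultaneously on every component.
\end{enumerate}

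The main obstacle is step 2 combined with step 3. A single global perturbation must destroy the identical vanishing of the Gauss–Kronecker curvature on every component at once. Identical vanishing is exactly the developable, or ruled, situation: the cuspidal edge $(x,y^2,y^3)$ gives a flat, cylinder-like image. For that situation I would argue via analytic continuation in $\varepsilon$. The quantity $K_\varepsilon({\bf x}_0)$ is real-analytic in $\varepsilon$, and its $\varepsilon^n$-coefficient equals the determinant of the Hessian of the normal component of $P$, which is nonzero for $P=(0,\dots,0,|{\bf x}|^2)$ whenever the normal has a nonzero last coordinate. If the normal's last coordinate vanishes, I would use a rotated copy of the same perturbation instead. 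This shows $K_\varepsilon\not\equiv0$ as a function of $\varepsilon$, and therefore only countably many bad $\varepsilon$ occur. This yields $g\in PRF(U_n,\mathbb{R}^{n+1})$ arbitrarily close to $f$.
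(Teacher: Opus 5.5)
\textbf{There is a genuine gap.} Your overall architecture matches the paper's: perturb only a component transverse to the image, view the perturbed map locally as a graph, show the Hessian determinant of the graph function at a base point is a non-trivial polynomial in the parameter, and discard a small set of bad parameters over countably many components. The gap is that the decisive non-degeneracy claim is false for the perturbation you actually use.

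Near ${\bf x}_0$ with $M_{f,n+1}({\bf x}_0)\neq 0$, put $F=(f_1,\dots,f_n)$, $\tilde{\bf x}=F({\bf x})$ and $\phi=f_{n+1}\circ F^{-1}$. The Gauss map of $g_\varepsilon$ is singular at ${\bf x}_0$ iff $\det\mathrm{Hess}_{\tilde{\bf x}}(\phi+\varepsilon\,\rho\circ F^{-1})$ vanishes at $\tilde{\bf x}_0$. So the $\varepsilon^n$-coefficient is $\det\mathrm{Hess}_{\tilde{\bf x}}(\rho\circ F^{-1})(\tilde{\bf x}_0)$, the Hessian in the graph coordinates, not in ${\bf x}$. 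For $\rho=|{\bf x}|^2$ this Hessian equals $2J^{T}J+\sum_k 2x_{0,k}\,\mathrm{Hess}\,(F^{-1})_k$, where $J=D(F^{-1})(\tilde{\bf x}_0)$. The chart-correction term can cancel the positive definite part.

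Concretely, take $f(x)=(x^2,x^2)$ on $\mathbb{R}$, which lies in $RM(\mathbb{R},\mathbb{R}^2)$. For every $\varepsilon$ and every direction $v$, the map $f+\varepsilon\,v\,x^2=\bigl((1,1)+\varepsilon v\bigr)x^2$ parametrizes a ray, so its Gauss map is constant. Thus your $K_\varepsilon$ vanishes identically in $\varepsilon$, and neither the basic perturbation nor any rotated copy ever yields a pseudo regular frontal. The same happens for $f({\bf x})=A(x_1^2,\dots,x_n^2)$ with $A$ linear injective, in every dimension. The ``up to first order'' reasoning in step 2 cannot repair this, since first-order information does not control a determinant that is already degenerate at $\varepsilon=0$.

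\textbf{The fix.} The missing idea is to make the perturbation genuinely quadratic in the graph coordinates.
\begin{itemize}
\item The paper adds $\frac12\sum_i c_i\,(f_i-f_i({\bf x}_0))^2$ to $f_{n+1}$. In the coordinates $\tilde{\bf x}$ this is exactly $\frac12\sum_i c_i(\tilde x_i-\tilde x_{0,i})^2$. The Hessian determinant at $\tilde{\bf x}_0$ is then $\det\bigl(\mathrm{Hess}\,\phi(\tilde{\bf x}_0)+\mathrm{diag}(c_1,\dots,c_n)\bigr)$, which is monic of degree $n$ in ${\bf c}$.
\item Alternatively, keep your scheme but center the quadratic at the base point: $\rho=|{\bf x}-{\bf x}_0|^2$, with one base point per component of $U_n$ and $\rho$ defined componentwise. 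Then $\nabla\rho({\bf x}_0)=0$ kills the correction term and the leading coefficient becomes $\det(2J^TJ)\neq 0$. This also survives your decaying factor $\varepsilon({\bf x})$, because $\varepsilon({\bf x})\rho({\bf x})$ still vanishes to second order at ${\bf x}_0$.
\end{itemize}

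\textbf{A smaller point.} Step 3 is phrased over components of $Reg(f)$, whereas the requirement concerns components of $Reg(g)$, which move with $\varepsilon$. The clean bridge is the function $\kappa_g=\det\bigl(\partial_i\partial_j g\cdot\tilde N_g\bigr)$, where $\tilde N_g$ is the generalized cross product of the $\partial_i g$. It is real-analytic on all of $U_n$, vanishes off $Reg(g)$, and on $Reg(g)$ vanishes exactly where $\nu_g$ is singular. Hence one base point with $\kappa_g\neq 0$ in each component of $U_n$ suffices.
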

} 
Before proving {\color{black}Lemma \ref{lemma4.1} and Lemma \ref{lemma4.2},  
several preparations are given.    
\par 
Let $C^\infty\left(U_n, \mathbb{R}^{n+1}\right)$ be the set consisting of 
$C^\infty$ mappings $f: U_n\to \mathbb{R}^{n+1}$.    
The Whitney $C^\infty$ topology is the standard topology 
on $C^\infty(U_n, \mathbb{R}^{n+1})$, 
which is a natural smooth generalisation of the compact open topology 
on the space of continuous mappings 
$U_n \to \mathbb{R}^{n+1}$ (see \cite{Golubitski_Guillemin}, \S 3 for details).
Since any real-analytic mapping $f: U_n\to \mathbb{R}^{n+1}$ is a 
$C^\infty$ mapping, it follows that 
$A(U_n, \mathbb{R}^{n+1})$ is a topological subspace of 
$C^\infty(U_n, \mathbb{R}^{n+1})$.   
\begin{fact}[Identity Theorem]\label{identity_theorem}
Let $p$ be a positive integer,  
$V$ be a connected open subset of $\mathbb{R}^p$ and 
let $F: V\to \mathbb{R}$ be a real-analytic function.   
Suppose that the fiber $F^{-1}(0)$ has an interior point.   
Then, $F({\color{black}\bf x})\equiv 0$ for any ${\color{black}\bf x}\in V$.   
\end{fact}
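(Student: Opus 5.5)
The statement to prove is Fact \ref{identity_theorem}, the classical identity theorem for real-analytic functions on a connected domain. I would argue by a connectedness (open--closed) argument applied to the set where $F$ vanishes to infinite order. Define
\[
Z=\left\{{\bf x}\in V \;|\; \frac{\partial^{|\alpha|} F}{\partial {\bf x}^\alpha}({\bf x})=0 \mbox{ for every multi-index } \alpha\right\}.
\]
The plan is to show that $Z$ is nonempty, closed in $V$, and open in $V$. Connectedness of $V$ then gives $Z=V$. In particular, taking $\alpha=0$ shows $F\equiv 0$ on $V$.

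First, nonemptiness. By hypothesis $F^{-1}(0)$ has an interior point ${\bf x}_0$, so $F$ vanishes on an open ball around ${\bf x}_0$. Every partial derivative of $F$ is computed from values of $F$ on that ball, so all of them vanish at ${\bf x}_0$, and ${\bf x}_0\in Z$.

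Second, closedness. Each partial derivative $\partial^\alpha F$ is continuous on $V$, because $F$ is real-analytic and hence $C^\infty$. Therefore $Z=\bigcap_\alpha (\partial^\alpha F)^{-1}(0)$ is an intersection of closed subsets of $V$, and so is closed in $V$.

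Third, openness, which is the only step that uses analyticity rather than mere smoothness. Take ${\bf y}\in Z$. By real-analyticity there is a ball $B$ centred at ${\bf y}$ and contained in $V$ on which $F$ equals its Taylor series at ${\bf y}$:
\[
F({\bf x})=\sum_\alpha \frac{\partial^\alpha F({\bf y})}{\alpha!}({\bf x}-{\bf y})^\alpha .
\]
All the coefficients vanish because ${\bf y}\in Z$, so $F\equiv 0$ on $B$. Then every derivative of $F$ also vanishes on $B$, so $B\subset Z$, and $Z$ is open.

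The one point to handle carefully is this openness step: it relies on the definition of real-analytic as local equality with a convergent power series, and one needs the convergence region to contain a genuine ball around ${\bf y}$. This holds by definition. The argument is standard and involves no real obstacle.
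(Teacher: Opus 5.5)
Your proof is correct. The paper does not prove this statement. It quotes it as a standard Fact and only adds a reference for why the $C^\infty$ analogue fails, so there is no argument of the paper's to compare yours against.

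Your open--closed argument on the set $Z$ of points where $F$ vanishes to infinite order is the standard textbook proof. It correctly places the use of analyticity in the openness step alone. That is exactly the step that breaks down for flat $C^\infty$ functions such as bump functions, which is the point of the paper's remark after the Fact.

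The one ingredient you use beyond the bare definition is the following: the coefficients of a convergent power series centred at ${\bf y}$ that represents $F$ near ${\bf y}$ must equal $\partial^\alpha F({\bf y})/\alpha!$. This follows by term-by-term differentiation. If your definition of real-analyticity allows the local series to be centred at some other nearby point, the usual re-expansion lemma supplies a series centred at ${\bf y}$. With that, your openness step goes through as written.
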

Fact \ref{identity_theorem} is a beautiful and strong 
fact.   By using Fact \ref{identity_theorem}, Lemma \ref{lemma4.1} and 
Lemma \ref{lemma4.2} can be proved 
in a clear and easy-to-understand manner.    
Notice that the real-analytic assumption for the function $F$ 
cannot be genelarized to the $C^\infty$ assumption.   
For details on this matter, see for instance \S 3 of \cite{brocker}.     
Nevertheless, by using common notions in Singularity Theory of 
Differentiable Mappings such as Jet Spaces, Thom's Jet Transverslity Theorem,  
it is possible to prove Lemma \ref{lemma4.1} 
for $C^\infty$ mappings.    
However, even if notions and results 
in Singularity Theory of Differentiable Mappings found in 
standard textbooks such as \cite{arnoldetall, Golubitski_Guillemin} 
are fully incorporated, 
it seems that the proof of \ch{the}
$C^\infty$ version of Lemma \ref{lemma4.2} 
remains wrapped in mystery.     
The biggest obstruction for the proof of \ch{the} $C^\infty$ version 
of Lemma \ref{lemma4.2} is that 
the source space $Reg(f)$ of \ch{the} Gauss mapping $\nu: Reg(f)\to S^n$ varies 
depending on parametrizations $f: U_n\to \mathbb{R}^{n+1}$.  
{\color{black} 
Then, we pose the following problem here.   
\begin{problem}\label{problem2} 
Does the $C^\infty$ version 
of Lemma \ref{lemma4.2} hold?
\end{problem}
}
{\color{black}On the other hand}, 
even if we restrict ourselves to real-analytic mappings, 
in practical terms, there are almost no problems when applying 
Theorem \ref{theorem1}, Theorem \ref{theorem2} and Theorem \ref{theorem3}.    
Theorefore, in this paper, 
we assume that all functions and mappings are real-analytic 
unless otherwise stated.    
}    
{
\par 
\smallskip 
\begin{proof}[Proof of Lemma \ref{lemma4.1}]   
{\color{black} 
Now, 
we start to give a proof of Lemma \ref{lemma4.1}.   
}
	Let $f\colon U_n \to \R^{n+1}$ be a {\color{black}real-analytic} mapping.   
{\color{black} For any ${\bf x}\in U_n$, set 
\[
{\bf x}=\left(x_1, \ldots, x_n\right).  
\]
Namely, $x_i$ $(1\le i\le n)$ is the $i$-th coordinate of 
${\bf x}\in U_n\subset \mathbb{R}^n=
\underbrace{\mathbb{R}\times \cdots \mathbb{R}}_{n\mbox{-tuples}}$.  
Set $f_j=X_j\circ f$ $(1\le j\le n+1)$ where 
$X_j: \mathbb{R}^{n+1}\to \mathbb{R}$ stands for the 
$j$-th coordinate function of 
$\mathbb{R}^{n+1}=
\underbrace{\mathbb{R}\times \cdots \mathbb{R}}_{(n+1)\mbox{-tuples}}$.   
Thus, we have set  
\[
f({\bf x})=\left(f_1({\bf x}), f_2({\bf x}), \ldots, f_{n+1}({\bf x})\right).   
\]
Consider the Jacobian matrix 
\[
Jf({\bf x})=\left(\frac{\partial f_j}{\partial x_i}({\bf x})\right)
_{1\le i\le n, 1\le j\le n+1}
\] 
of $f$.  
For any $k$ $(1\le k\le n+1)$, let $M_{f, k}({\bf x})$ be the determinant of 
the $n\times n$ matrix given by removing the 
$k$-th \ch{row} ($1 \leq k \leq n+1)$ from $Jf({\bf x})$.   
That is to say, 
\[
M_{f, k}({\bf x})=
\left|
\begin{array}{ccc} 
\frac{\partial f_1}{\partial x_1}({\bf x}) & \cdots 
& \frac{\partial f_1}{\partial x_n}({\bf x}) \\ 
\vdots & \vdots & \vdots \\ 
\widehat{ \frac{\partial f_k}{\partial x_1}({\bf x})} & \widehat{ \cdots} 
& \widehat {\frac{\partial f_k}{\partial x_n}({\bf x})} \\ 
\vdots & \vdots & \vdots \\ 
\frac{\partial f_{n+1}}{\partial x_1}({\bf x}) & \cdots 
& \frac{\partial f_{n+1}}{\partial x_n}({\bf x}) \\ 
\end{array}
\right|, 
\]
where the $\widehat{\cdots}$ denotes deleting $\cdots$.   
Let $\Phi_f: U_n\to \mathbb{R}$ be the real-analytic 
function defined by 
\[
\Phi_f({\bf x})= \sum_{k=1}^{n+1}\ch{(M_{f, k}({\bf x}))^2}.    
\]
}  
	{\color{black}Recall that} a point ${\bf x} \in U_n$ 
	is singular if $df_{\bf x}\colon T_{\bf x}U_n \to T_{f({\bf x})}\R^{n+1}$ is not injective.
	This happens at {\color{black} and only at} points 
	{\color{black}{\bf x}} where 
	{\color{black}all of 
	the $n$-minors $M_{f, k}({\bf x})$ 
	of the Jacobian matrix $Jf({\bf x})$ vanish, which is equivalent to 
	say\ch{ing} that this happens at and only at points ${\bf x}$ satisfying 
	\[
	\Phi_f({\bf x})=0.    
	\]  
	}
\par 
{\color{black}
Suppose that $U_n$ is connected.   Suppose moreover that 
$f\not\in RM\left(U_n, \mathbb{R}^{n+1}\right)$.    The second 
supposition is equivalent to assume that  
the closed set 
\[
Ker(\Phi_{f})=\left\{{\bf x}\in U_n\, |\, \Phi_f({\bf x})=0\right\}
\]
has an interior point.   
Then, by Fact \ref{identity_theorem} (the I\ch{d}entity Theorem) and by 
the first supposition, 
it follows 
\[
\Phi_f({\bf x})\equiv 0 \quad (\mbox{for any }{\bf x}\in U_n).    
\]
Next, we consider linear perturbations of $f$.      
For any $j$ $(1\le j\le n+1)$, let ${\bf c}_{j}\in \mathbb{R}^n$ be an   
$n$-dimensional constant vector.   
\[
{\bf c}_j =\left(c_{(j,1)}, \ldots c_{(j,n)}\right)\in \mathbb{R}^n.  
\]
For the $f\in A(U_n, \mathbb{R}^{n+1})$, let  
$f+{\bf \ch{C}}: U_n\to \mathbb{R}^{n+1}$ be a mapping of the following type.   
\[
\left(f+{\bf \ch{C}}\right)({\bf x})
=\left(f_1({\bf x})+{\bf c}_1\cdot {\bf x}, \ldots, 
f_{n+1}({\bf x})+{\bf c}_{n+1}\cdot {\bf x}\right)
\]
Thus, $f+{\bf \ch{C}}$ is a linear perturbation of $f$.   
Take one point ${\bf x}_0$ of $U_n$ and fix it.    
For any ${\bf \ch{C}}\in \mathbb{R}^{n(n+1)}$, 
set 
\[
\Psi_{f, {\bf x}_0}({\bf \ch{C}})=\Phi_{f+{\bf \ch{C}}}({\bf x}_0).   
\]
If we regard $c_{(j,i)}$ $(1\le i\le n, 1\le j\le n+1)$ as variables, 
then the function $\Psi_{f, {\bf x}_0}: \mathbb{R}^{n(n+1)}\to \mathbb{R}$ 
is a monic polynomial function with degree $2n$, \ch{in particular not identically zero}.   
Hence we have the following.   
\begin{sublemma}\label{sublemma4.1}
The following subset of $\mathbb{R}^{n(n+1)}$
is of Lebesgue measure zero \ch{and closed}.  
\[
Ker\left(\Psi_{f, {\bf x}_0}\right)
=\left\{{\bf \ch{C}}\in \mathbb{R}^{n(n+1)}\, 
|\, \Psi_{f, {\bf x}_0}({\bf \ch{C}})=0\right\}.   
\leqno{(4.1)}
\]
\end{sublemma}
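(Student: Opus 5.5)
The plan is to separate the statement into closedness, which is soft, and measure zero, which is the standard fact that the zero set of a nonzero polynomial on $\mathbb{R}^{N}$ is Lebesgue-null. Here $N=n(n+1)$.

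\emph{Closedness.} The Jacobian matrix of $f+{\bf C}$ at ${\bf x}_0$ is $Jf({\bf x}_0)+C$, where $C$ is the constant matrix whose entries are the $c_{(j,i)}$. Each minor $M_{f+{\bf C},k}({\bf x}_0)$ is therefore a polynomial in the variables $c_{(j,i)}$, and so is $\Psi_{f,{\bf x}_0}$. In particular $\Psi_{f,{\bf x}_0}$ is continuous, and $Ker(\Psi_{f,{\bf x}_0})=\Psi_{f,{\bf x}_0}^{-1}(0)$ is closed.

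\emph{$\Psi_{f,{\bf x}_0}\not\equiv 0$.} Rather than rely on the monic-degree-$2n$ claim, I will exhibit one explicit point where $\Psi_{f,{\bf x}_0}$ does not vanish. Choose $C=-Jf({\bf x}_0)+E$, where $E$ is the $(n+1)\times n$ matrix whose first $n$ rows form the identity and whose last row is zero. Then $J(f+{\bf C})({\bf x}_0)=E$. Deleting the $(n+1)$-st row gives the identity, so $M_{f+{\bf C},n+1}({\bf x}_0)=1$ and $\Psi_{f,{\bf x}_0}({\bf C})\ge 1$.

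\emph{Measure zero.} I will prove by induction on $N$ that a polynomial $P$ on $\mathbb{R}^N$ with $P\not\equiv0$ has a null zero set $Z(P)$.
\begin{itemize}
\item For $N=1$, a nonzero polynomial has finitely many roots.
\item For the inductive step, write $P(y,t)=\sum_{m=0}^{d}p_m(y)t^m$ with $y\in\mathbb{R}^{N-1}$ and $p_d\not\equiv 0$.
\item If $p_d(y)\neq0$, the slice $\{t : P(y,t)=0\}$ has at most $d$ points and hence has one-dimensional measure zero.
\item The set of $y$ with $p_d(y)=0$ is null in $\mathbb{R}^{N-1}$ by the induction hypothesis.
\item Since $Z(P)$ is closed, hence measurable, Fubini--Tonelli gives $\lambda^N(Z(P))=\int_{\mathbb{R}^{N-1}}\lambda^1(Z(P)_y)\,dy=0$. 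The integrand is zero off a null set.
\end{itemize}
Applying this with $P=\Psi_{f,{\bf x}_0}$ finishes the proof.

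There is no real obstacle here. The only point needing care is showing that $\Psi_{f,{\bf x}_0}$ is a nonzero polynomial. The explicit choice of ${\bf C}$ above settles this directly, without any degree or leading-coefficient bookkeeping.
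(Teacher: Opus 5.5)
Your proof is correct and follows the same route as the paper: $\Psi_{f,{\bf x}_0}$ is a polynomial in the $c_{(j,i)}$ that is not identically zero, so its zero set is closed and Lebesgue-null. There are two differences. You certify non-vanishing by evaluating at ${\bf C}=-Jf({\bf x}_0)+E$ rather than relying on the paper's loosely stated ``monic of degree $2n$'' claim, and you prove via the Fubini induction the null-zero-set fact that the paper simply takes for granted.
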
 
By Fact \ref{identity_theorem} and Sublemma \ref{sublemma4.1}, 
for any ${\bf \ch{C}}\in \mathbb{R}^{n(n+1)}$ satifying 
$\Psi_{f, {\bf x}_0}({\bf \ch{C}})\ne 0$, the linear perturbation 
$f+{\bf \ch{C}}$ must be contained 
in $RM\left(U_n, \mathbb{R}^{n+1}\right)$.   
Moreover, again by Sublemma \ref{sublemma4.1}, 
for any positive integer $\ell$ \ch{there exists ${\bf \ch{C}}\in \mathbb{R}^{n(n+1)}$ such that} the linear perturbation 
$f+{\bf \ch{C}}$ \ch{satisfies} 
the following condition.   
\[
\sum_{j=1}^{n+1}\left(\sum_{i=1}^n c_{(j,i)}^2\right) < \frac{1}{\ell^2}.   
\leqno{(4.2)}
\]  
For any positive integer $\ell$, take one constant vector 
${\bf \ch{C}}_0[\ell]\in \mathbb{R}^{n(n+1)}$ satisfying (4.1) and (4.2) and fix it.   
Then, it is clear that 
\[
f+{\bf \ch{C}}_0[\ell] \in RM\left(U_n, \mathbb{R}^{n+1}\right) 
\quad (\mbox{for any }\ell\in \mathbb{N}) 
\leqno{(4.3)}
\]
and 
\[
\lim_{\ell\to \infty}\left(f+{\bf \ch{C}}_0[\ell]\right)=f.   
\leqno{(4.4)}
\]
\par 
\smallskip 
Suppose that $U_n$ is not connected.    
Then, since $\mathbb{R}^n$ satisfies the axiom of second countability, 
the number of connected components of $U_n$ is countable.   
Since the union of countably many 
Lebesgue measure zero subsets 
in $\mathbb{R}^{n(n+1)}$  
is a subset of Lebesgue measure zero, we may again choose 
one constant vector 
${\bf c}_0[\ell]\in \mathbb{R}^{n(n+1)}$ satisfying (4.1) and (4.2) for any 
$\ell\in \mathbb{N}$.      
Thus, we can again obtain (4.3) and (4.4).  
This completes the proof.   
}
\end{proof}   
\bigskip 
{\color{black} 
\begin{proof}[Proof of Lemma \ref{lemma4.2}]         
{\color{black} 
Suppose that $U_n$ is connected.   
Let $f$ be a mapping contained in $RM(U_n, \mathbb{R}^{n+1})$.
\ch{There is a $k\in\{1,\ldots,n+1\}$ such that $M_{f,k}^{-1}(0)$ has no interior points.
If this was not the case, we would have by Fact \ref{identity_theorem} that $M_{f,1},\dots,M_{f,n+1}$ all vanish on $U_n$, meaning that
	\[\Phi_f({\bf x})= \sum_{k=1}^{n+1}\ch{(M_{f, k}({\bf x}))^2}\equiv 0\]
and $Reg(f)=U_n\backslash \ker(\Phi_f)=\emptyset$, hence $f$ would not be in $RM(U_n,\mathbb{R}^{n+1})$.
We can thus assume,}
by permutating coordinates $\left(X_1, \ldots, X_{n+1}\right)$ 
of $\mathbb{R}^{n+1}$ in advance if necessary, that
\ch{$M_{f,{n+1}}^{-1}(0)$ has no interior points, hence}
$Reg(F)$ is 
dense in $U_n$,      
where the mapping $F: U_n\to \mathbb{R}^n$ is defined by 
\[
F({\bf x})= \left(X_1\circ f({\bf x}), \ldots, X_n\circ f({\bf x}))\right).   
\]
We \ch{are going to} construct a sequence 
$\left\{f_\ell: U_n\to \mathbb{R}^{n+1}\right\}_{\ell\in \mathbb{N}}$ 
such that the following three are satisfied.    
\smallskip 
\begin{enumerate}
\item[$(4.A)$] For any $\ell\in \mathbb{N}$, 
$Reg\left(f_\ell\right)$ is 
dense in $U_n$.       
\item[$(4.B)$] For any $\ell\in \mathbb{N}$, 
$Reg\left(\nu_{f_\ell}: Reg\left(f_\ell\right)\to S^n\right)$ is 
dense in $Reg\left(f_\ell\right)$.    
\item[$(4.C)$] $\lim_{\ell\to \infty}f_\ell=f$.   
\end{enumerate}
\par 
\smallskip 
Set 
\[
X_i\circ f_\ell =X_i\circ f=X_i\circ F\quad (1\le i\le n).   
\]
Then, it is clear that 
\[
Reg\left(f_\ell\right)\supset Reg(F).    
\]
Hence, from the assum\ch{p}tion above, $(4.A)$ follows.   
From now on, for any 
$\ell\in \mathbb{N}$, 
we construct $X_{n+1}\circ f_\ell$ satifying  
$(4.B)$ and $(4.C)$.  
For any sufficiently small positive number $\varepsilon$ and 
any ${\bf x}_0\in U_n$ let $D_{\left({\bf x}_0, \varepsilon\right)}$ be the 
open disc centered at ${\bf x}_0$ with radius $\varepsilon$.   Namely, 
\[
D\left({\bf x}_0, \varepsilon\right)
=\left\{\left. {\bf x}\in \mathbb{R}^n\; \right|\; 
\left({\bf x}-{\bf x}_0\right)\cdot \left({\bf x}-{\bf x}_0\right)
< \varepsilon^2\right\}. 
\]        
For any  ${\bf x}\in D\left({\bf x}_0, \varepsilon\right)$, 
define the $n$-dimensional vector 
$F^2({\bf x})\in \mathbb{R}^n$ by 
\[
F^2({\bf x})=\left(\left(X_1\circ F({\bf x}\right) - 
\left.X_1\circ F({\bf x}_0)\right)^2, \ldots, 
\left(X_n\circ F({\bf x}\right) - 
\left.X_n\circ F({\bf x}_0)\right)^2\right).   
\]
Then define the function $X_{n+1}\circ f_\ell: U_n\to \mathbb{R}$ by 
\[
X_{n+1}\circ f_\ell({\bf x})= X_{n+1}\circ f({\bf x}) + 
\frac{1}{2}\left({\bf c}_\ell\cdot F^2({\bf x})\right), 
\]
where ${\bf c}_\ell =\left(c_{(\ell, 1)}, \ldots, c_{(\ell, n)}\right)$ is an 
$n$-dimensional constant vector in $\mathbb{R}^n$.
By Fact \ref{identity_theorem}, $(4.B)$ is equivalent to the following 
$(4.B.1)$.    
\smallskip 
\begin{enumerate}
\item[$(4.B.1)$] For any $\ell\in \mathbb{N}$, any sufficiently small 
positive number $\varepsilon>0$ and any ${\bf x}_0\in Reg(F)$ satisfying 
$D\left({\bf x}_0, \varepsilon\right)\subset Reg(F)$, 
the set 
\[
Reg\left(\nu_{f_\ell}|_{D\left({\bf x}_0, \varepsilon\right)}
: D\left({\bf x}_0, \varepsilon\right)\to S^n\right)\ch{=Reg\left(\nu_{f_\ell}\right) \cap D({\bf x}_0,\varepsilon)}
\] 
is 
dense in $D\left({\bf x}_0, \varepsilon\right)$.   
\end{enumerate}
\par  
\smallskip 
\ch{
	The argument is as follows: if $Reg(\nu_{f_\ell})$ is not 
	dense in $Reg(f_\ell)$, then $\ker(\Phi_{\nu_{f_\ell}})$ must have an interior point and $Reg(\nu_{f_\ell})$ is empty by Fact \ref{identity_theorem}, thus $Reg\left(\nu_{f_\ell}|_{D\left({\bf x}_0, \varepsilon\right)}\right)$ is empty for all ${\bf x}_0 \in Reg(f_\ell)$.
Conversely, if there is an ${\bf x}_0 \in Reg(f)$ such that $Reg\left(\nu_{f_\ell}|_{D\left({\bf x}_0,\varepsilon\right)}\right)$ is not 
dense in $D({\bf x}_0,\varepsilon)$, then $\ker(\Phi_{\nu_{f_\ell}})$ has an interior point in $D({\bf x}_0,\varepsilon)$ and $Reg\left(\nu_{f_\ell}|_{D\left({\bf x}_0, \varepsilon\right)}\right)$ is empty by Fact \ref{identity_theorem}, thus $Reg\left(\nu_{f_\ell}\right)$ is not 
dense in $Reg(f_\ell)$.
}

For any $i$ $(1\le i\le n)$, let 
$\widetilde{x}_i: D\left({\bf x}_0, \varepsilon\right)\to \mathbb{R}$ 
be the function 
defined by 
\[
\widetilde{x}_i=X_i\circ 
\left(F|_{D\left({\bf x}_0, \varepsilon\right)}\right).   
\] 
Since $F|_{D({\bf x}_0, \varepsilon)}: 
D({\bf x}_0, \varepsilon)
\to F\left(D({\bf x}_0, \varepsilon)\right)$ 
is an analytic diffeomorphism, 
$\left(D({\bf x}_0, \varepsilon), 
\left(\widetilde{x}_1, \ldots, \widetilde{x}_n\right)\right)$ is regarded 
as a coordinate neighborhood of $\mathbb{R}^n$.    
Set $\widetilde{\bf x}=\left(\widetilde{x}_1, \ldots, \widetilde{x}_n\right)$.   
Then, the mapping $f_\ell{|}_{D_{\left({\bf x}_0, \varepsilon\right)}}: 
\ch{D\left({\bf x}_0, \varepsilon\right)\to \mathbb R^{n+1}}$ 
may be described with respect to coordinates 
$\left(\widetilde{x}_1, \ldots, \widetilde{x}_n\right)$ as follows.   
\begin{eqnarray*}
X_i\circ f_\ell\left(\widetilde{\bf x}\right) 
& = & \widetilde{x}_i\quad (1\le i\le n),  \\  
X_{n+1}\circ f_\ell \left(\widetilde{\bf x}\right) 
& = & 
X_{n+1}\circ f(\widetilde{\bf x}) + 
\frac{1}{2}\left({\bf c}_\ell\cdot F^2(\widetilde{\bf x})\right)  \\ 
{ } & = & 
X_{n+1}\circ f(\widetilde{\bf x}) + 
\frac{1}{2}\sum_{i=1}^nc_{(\ell, i)}
\left(\widetilde{x}_i-\widetilde{x}_{(0, i)}\right)^2,      
\end{eqnarray*}
where $\widetilde{\bf x}\left({\bf x}_0\right)=
\left(\widetilde{x}_{(0, 1)}, \ldots, \widetilde{x}_{(0, n)}\right)$.   
For any ${\bf x}_0\in \mathbb{R}^n$, 
let $h: \mathbb{R}^n\to \mathbb{R}^n$ be the parallel translation 
in $\mathbb{R}^n$ defined by 
\[
h({\bf x})={\bf x}+{\bf x}_0.    
\]
For any $\ell\in \mathbb{N}$, any sufficiently small positive 
$\varepsilon >0$ and any ${\bf x}_0\in Reg(F)$, let 
$H_\ell: \mathbb{R}^{n+1} \to 
\mathbb{R}^{n+1}$ be the affine transformation of $\mathbb{R}^{n+1}$ 
defined by 
\begin{eqnarray*}
{ } & { } & H_\ell\left(X_1, \ldots, X_n, X_{n+1}\right)  \\ 
{ } & = &  
\left(X_1-\widetilde{x}_{(0, 1)}, \ldots,\; X_n- \widetilde{x}_{(0,n)}, 
\; X_{n+1}- X_{n+1}\circ 
f\left(\widetilde{\bf x}\left({\bf x}_0\right)\right)  
- \sum_{i=1}^n\frac{\partial \left(X_{n+1}\circ f\right)}{\partial \widetilde{x}_i}
\left(\widetilde{\bf x}\left({\bf x}_0\right)\right)
\left(X_i- \widetilde{x}_{(0,i)}\right)\right).     
\end{eqnarray*}
For any sufficiently small $\varepsilon>0$, any ${\bf x}_0\in Reg(F)$  
and any $\ell\in \mathbb{N}$, 
define the mapping 
$
\widetilde{f}_\ell: D\left({\bf 0}, \varepsilon\right)\to \mathbb{R}^{n+1} 
$
by 
\[
\widetilde{f}_{\ell}=H_\ell\circ f_{\ell}\circ 
\left(h|_{D\left({\bf 0}, \varepsilon\right)}\right). 
\]     
Then, the convergent power series of 
$X_i\circ \widetilde{f}_{\ell}$ $(1\le i\le n)$ around the origin of 
$\mathbb{R}^n$ has the following form where 
$\widetilde{y}_i=\widetilde{x}_i-\widetilde{x}_{(0,i)}$.   
\begin{eqnarray*}
{ } & { } &  X_i\circ \widetilde{f}_\ell
\left(\widetilde{y}_1, \ldots, \widetilde{y}_n\right)  \\
{ } & = & 
X_i\circ H_\ell\circ f_\ell\circ h\left(\widetilde{y}_1, \ldots, \widetilde{y}_n\right) 
\\ 
{ } & = &  
X_i\circ H_\ell\circ f_\ell 
\left(\widetilde{y}_1+\widetilde{x}_{(0,1)}, \ldots, 
\widetilde{y}_n+\widetilde{x}_{(0,n)}
\right) \\ 
{ } & = & 
X_i\circ H_\ell 
\left(\widetilde{y}_1+\widetilde{x}_{(0,1)}, \ldots, 
\widetilde{y}_n+\widetilde{x}_{(0,n)}, 
X_{n+1}\circ f_\ell 
\left(\widetilde{y}_1+\widetilde{x}_{(0,1)}, \ldots, 
\widetilde{y}_n+\widetilde{x}_{(0,n)}
\right) 
\right) \\ 
{ } & = & 
X_i\left(\widetilde{y}_1, \ldots, \widetilde{y}_n , 
X_{n+1}\circ H_\ell \circ f_\ell 
\left(\widetilde{y}_1+\widetilde{x}_{(0,1)}, \ldots, 
\widetilde{y}_n+\widetilde{x}_{(0,n)} 
\right) 
\right) \\ 
{ } & = &  \widetilde{y}_i.   
\end{eqnarray*}
On the other hand, from the construction of the affine transformation 
$H_\ell$, it is easily seen that the following holds.
\begin{sublemma}\label{sublemma_powerseries}
For any $\ell\in \mathbb{N}$, 
the convergent power series of $X_{\ch{n+1}}\circ \widetilde{f}_\ell$
around the origin of $\mathbb{R}^n$ starts from the quadratic terms.   
Namely, the following two holds for any $\ell\in \mathbb{N}$.    
\begin{enumerate}
\item[(1)] $X_{n+1}\circ \widetilde{f}_\ell({\bf 0})=0$.   
\item[(2)] 
For any $i$ $(1\le i\le n)$, we have the following.   
\[
\frac{\partial \left(X_{n+1}\circ \widetilde{f}_\ell\right)}
{\partial \widetilde{y}_i}({\bf 0}) = 0.   
\]
\end{enumerate}
\end{sublemma}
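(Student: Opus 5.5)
This is a direct computation. I would first write $X_{n+1}\circ\widetilde{f}_\ell$ explicitly by unwinding the composition $H_\ell\circ f_\ell\circ h$. Write $g=X_{n+1}\circ f$ in the coordinates $\widetilde{\bf x}$ on $D({\bf x}_0,\varepsilon)$, and set $\widetilde{\bf x}_0=\widetilde{\bf x}({\bf x}_0)$. The first $n$ components of $f_\ell\circ h$ are $\widetilde{y}_i+\widetilde{x}_{(0,i)}$. Substituting these into the last component of $H_\ell$ gives
\[
X_{n+1}\circ\widetilde{f}_\ell(\widetilde{\bf y})
= g(\widetilde{\bf y}+\widetilde{\bf x}_0)-g(\widetilde{\bf x}_0)
-\sum_{i=1}^n\frac{\partial g}{\partial\widetilde{x}_i}(\widetilde{\bf x}_0)\,\widetilde{y}_i
+\frac{1}{2}\sum_{i=1}^n c_{(\ell,i)}\widetilde{y}_i^{\,2}.
\]
Here I use the expression for $X_{n+1}\circ f_\ell$ in the $\widetilde{\bf x}$ coordinates displayed just before the definition of $H_\ell$, together with $\widetilde{x}_i-\widetilde{x}_{(0,i)}=\widetilde{y}_i$.

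For assertion (1), set $\widetilde{\bf y}={\bf 0}$. The first two terms cancel, and the linear sum and the quadratic sum both vanish, so the value is $0$.

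For assertion (2), differentiate with respect to $\widetilde{y}_j$ and evaluate at ${\bf 0}$. By the chain rule the first term gives $\frac{\partial g}{\partial\widetilde{x}_j}(\widetilde{\bf x}_0)$, because $h$ is a translation with identity Jacobian. The linear correction gives exactly $-\frac{\partial g}{\partial\widetilde{x}_j}(\widetilde{\bf x}_0)$. The quadratic perturbation gives $c_{(\ell,j)}\widetilde{y}_j$, which vanishes at the origin. Hence the derivative is $0$.

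Since $X_{n+1}\circ\widetilde{f}_\ell$ is real-analytic near ${\bf 0}$ (it is a composition of analytic maps, and $F|_{D({\bf x}_0,\varepsilon)}$ is an analytic diffeomorphism), its Taylor series converges there. By (1) and (2) its constant and linear coefficients vanish, so the series starts from the quadratic terms.

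The only point needing care, and the main potential obstacle, is notational consistency. The partial derivatives in $H_\ell$ must be taken with respect to the same coordinates $\widetilde{x}_i$ in which $X_i\circ f_\ell=\widetilde{x}_i$; I would check this explicitly. The perturbation term $\frac12{\bf c}_\ell\cdot F^2$ is centred at $F({\bf x}_0)=\widetilde{\bf x}_0$, so it has no constant or linear part and does not interfere with the cancellation.
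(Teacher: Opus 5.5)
Your proof is correct and is exactly the computation behind the paper's one-line justification (that the sublemma is ``easily seen'' from the construction of $H_\ell$). Unwinding the composition shows $X_{n+1}\circ\widetilde{f}_\ell$ is $g(\widetilde{\bf y}+\widetilde{\bf x}_0)$ minus its first-order Taylor polynomial at $\widetilde{\bf x}_0$, plus $\frac{1}{2}\sum_i c_{(\ell,i)}\widetilde{y}_i^{2}$. Your observation that the perturbation is centred at $F({\bf x}_0)$, so that using $f$ rather than $f_\ell$ inside $H_\ell$ is harmless, covers the only point the paper leaves implicit. The conclusion also holds if $h$ is read literally as a translation by ${\bf x}_0$ in the original coordinates, because the linear terms then cancel by the chain rule applied to $X_{n+1}\circ f=g\circ F$.
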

Notice that $\widetilde{f}_\ell\left({\bf 0}\right)={\bf 0}$ 
and that by Sublemma \ref{sublemma_powerseries}, 
the power series expression of $X_{n+1}\circ \widetilde{f}_\ell$ 
around the origin has no linear terms for any $\ell\in \mathbb{N}$.    
Notice moreover that \ch{since $h$ is a homeomorphism}, $(4.B.1)$ is equivalent to the following $(4.B.2)$
\smallskip 
\begin{enumerate}
\item[$(4.B.2)$] For any $\ell\in \mathbb{N}$, any sufficiently small 
positive number $\varepsilon>0$ and any ${\bf x}_0\in Reg(F)$ satisfying 
$D\left({\bf x}_0, \varepsilon\right)\in Reg(F)$, 
the set 
\[
Reg\left(\nu_{\widetilde{f}_\ell}
: D\left({\bf 0}, \varepsilon\right)\to S^n\right)
\] 
is 
dense in $D\left({\bf 0}, \varepsilon\right)$.  
\end{enumerate}
\smallskip 
The Jacobian matrix of $\widetilde{f}_\ell$ with respect to 
partial derivatives of 
$X_j\circ \widetilde{f}_\ell$ $(1\le j\le n+1)$ 
by $\widetilde{y}_i$ $(1\le i\le n)$ is as follows.   
\[
\ch{\left(
\begin{array}{ccccc}
1 		& 0 		& \cdots 	& 0 \\ 
0 		& 1 		& \cdots 	& 0 \\ 
\vdots 	& \vdots & \ddots & \vdots \\ 
0 		& 0 		& \cdots 	& 1 \\ 
\frac{\partial \left(X_{n+1}\circ \widetilde{f}_\ell\right)}
{\partial \widetilde{y}_1}
(\widetilde{\bf y}) & 
\frac{\partial \left(X_{n+1}\circ \widetilde{f}_\ell\right)}
{\partial \widetilde{y}_2}
(\widetilde{\bf y}) & \cdots & 
\frac{\partial \left(X_{n+1}\circ \widetilde{f}_\ell\right)}
{\partial \widetilde{y}_n}
(\widetilde{\bf y})
\end{array}
\right).}
\]
Thus, the Gauss mapping 
$\nu_{\widetilde{f}_\ell}: D({\bf 0}, \varepsilon)\to S^n$ 
must be one of the following two.    
\begin{eqnarray*}
D\left({\bf 0}, \varepsilon\right)\ni \widetilde{\bf y} & \mapsto &  
\frac{1}{\sqrt{\sum_{i=1}^n 
\left(\frac{\partial \left(X_{n+1}\circ\widetilde{f}_{\ell}\right)}
{\partial \widetilde{y}_i}
\left(\widetilde{\bf y}\right)\right)^2+1}}
\left(
-\frac{\partial \left(X_{n+1}\circ\widetilde{f}_{\ell}\right)}
{\partial \widetilde{y}_1}
\left(\widetilde{\bf y}\right), \cdots, 
-\frac{\partial \left(X_{n+1}\circ \widetilde{f}_{\ell}\right)}
{\partial \widetilde{y}_n}
\left(\widetilde{\bf y}\right), 1\right)\in S^n \\ 
D\left({\bf 0}, \varepsilon\right)\ni \widetilde{\bf y} & \mapsto &  
\frac{1}{\sqrt{\sum_{i=1}^n 
\left(\frac{\partial \left(X_{n+1}\circ\widetilde{f}_{\ell}\right)}
{\partial \widetilde{y}_i}
\left(\widetilde{\bf y}\right)\right)^2+1}}
\left(
\frac{\partial \left(X_{n+1}\circ\widetilde{f}_{\ell}\right)}
{\partial \widetilde{y}_1}
\left(\widetilde{\bf y}\right), \cdots, 
\frac{\partial \left(X_{n+1}\circ\widetilde{f}_{\ell}\right)}
{\partial \widetilde{y}_n}
\left(\widetilde{\bf y}\right), -1\right)\in S^n.   
\end{eqnarray*}
\ch{These expressions can be obtained by computing the cross product of the columns of the Jacobian matrix above.}
Without loss of generality, we may assume 
\[
\nu_{\widetilde{f}_{\ell}}\left(\widetilde{\bf y}\right) = 
\frac{1}{\sqrt{\sum_{i=1}^n 
\left(\frac{\partial \left(X_{n+1}\circ \widetilde{f}_{\ell}\right)}
{\partial \widetilde{y}_i}
\left(\widetilde{\bf y}\right)\right)^2+1}}
\left(
-\frac{\partial \left(X_{n+1}\circ \widetilde{f}_{\ell}\right)}
{\partial \widetilde{y}_1}
\left(\widetilde{\bf y}\right), \cdots, 
-\frac{\partial \left(X_{n+1}\circ \widetilde{f}_{\ell}\right)}
{\partial \widetilde{y}_n}
\left(\widetilde{\bf y}\right), 1\right).     
\]
Since $X_{n+1}\circ \widetilde{f}_\ell$ does not have linear terms 
for any $\ell\in \mathbb{N}$, it follows 
\[
\nu_{\widetilde{f}_\ell}\left({\bf 0}\right) = 
\left(0, \ldots, 0, 1\right)\in S^n.     
\]
Set ${\bf e}_{n+1}=(0, \ldots, 0, 1)\in S^n$.    
Since $\mathbb{R}^{n+1}$ is a real affine space 
and the $n$-dimensional unit sphere 
$S^n$ is canonically embedded in $\mathbb{R}^{n+1}$, 
$T_{{\bf e}_{n+1}}S^n$ is naturally identified with $\mathbb{R}^n\times \{1\}$.    
Under this identification, for any $\ell\in \mathbb{N}$ define the 
mapping $N_{\widetilde{f}_\ell}: D\left({\bf 0}, \varepsilon\right)
\to T_{{\bf e}_{n+1}}S^n=\mathbb{R}^n\times \{1\}$ 
by 
\[
N_{\widetilde{f}_\ell}(\widetilde{\bf y}) = 
\left(
-\frac{\partial \left(X_{n+1}\circ \widetilde{f}_{\ell}\right)}
{\partial \widetilde{y}_1}
\left(\widetilde{\bf y}\right), \cdots, 
-\frac{\partial \left(X_{n+1}\circ \widetilde{f}_{\ell}\right)}
{\partial \widetilde{y}_n}
\left(\widetilde{\bf y}\right), 1\right).    
\]  
Then, 
since any normal coordinates 
around the point ${\bf e}_{n+1}$   
is nothing but 
a local inverse of the exponential mapping from $T_{{\bf e}_{n+1}}S^n$ to 
$S^n$, 
we have the following.    
\begin{sublemma}\label{sublemma4.2}
Let $(V, \xi)$ be a normal coordinate neighborhood of 
$S^n$ at ${\bf e}_{n+1}$.   Then, there exists a real-analytic 
diffemorphism $\varphi: T_{{\bf e}_{n+1}}S^n \to T_{{\bf e}_{n+1}}S^n$ 
such that 
\[
\varphi\circ \xi\circ \nu_{\widetilde{f}_\ell}\left(\widetilde{\bf y}\right) = 
N_{\widetilde{f}_\ell}\left(\widetilde{\bf y}\right)     
\]
for any $\ell\in \mathbb{N}$ and any $\widetilde{\bf y}\in 
D\left({\bf 0}, \varepsilon\right)$.     
\end{sublemma}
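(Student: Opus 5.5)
The plan is to construct $\varphi$ explicitly as a composition of two maps that depend only on the geometry of $S^n$ near ${\bf e}_{n+1}$, so that $\varphi$ does not depend on $\ell$ or $\widetilde{\bf y}$. I would identify $T_{{\bf e}_{n+1}}S^n$ with $\mathbb{R}^n\times\{1\}$ as in the text. The map $N_{\widetilde f_\ell}$ is the central (gnomonic) projection of $\nu_{\widetilde f_\ell}$ from the origin onto the affine hyperplane $X_{n+1}=1$. Indeed,
\[
\nu_{\widetilde f_\ell}(\widetilde{\bf y})=\frac{N_{\widetilde f_\ell}(\widetilde{\bf y})}{|N_{\widetilde f_\ell}(\widetilde{\bf y})|},
\]
and conversely $N_{\widetilde f_\ell}=\nu_{\widetilde f_\ell}/(X_{n+1}\circ\nu_{\widetilde f_\ell})$. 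So I define the gnomonic map $g:S^n_+\to\mathbb{R}^n\times\{1\}$ by $g({\bf X})={\bf X}/X_{n+1}$ on the open upper hemisphere $S^n_+=\{X_{n+1}>0\}$. This map is a real-analytic diffeomorphism onto $\mathbb{R}^n\times\{1\}$, with inverse ${\bf Z}\mapsto {\bf Z}/|{\bf Z}|$. Moreover, $N_{\widetilde f_\ell}=g\circ\nu_{\widetilde f_\ell}$ holds identically on $D({\bf 0},\varepsilon)$, because the $(n+1)$-th component of $\nu_{\widetilde f_\ell}$ is positive everywhere there.

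Next I describe the normal coordinates. The exponential map $\exp_{{\bf e}_{n+1}}$ sends a tangent vector $({\bf v},1)$, with $|{\bf v}|=r$, to
\[
\left(\frac{\sin r}{r}\,{\bf v},\ \cos r\right).
\]
Restricted to the open ball $|{\bf v}|<\pi/2$, it is a real-analytic diffeomorphism onto $S^n_+$. Here $\sin r/r$ and $\cos r$ are analytic even functions of $r$, hence analytic in ${\bf v}$. Taking $\xi$ to be its local inverse, I set $\varphi=g\circ\exp_{{\bf e}_{n+1}}$. Explicitly,
\[
\varphi({\bf v},1)=\left(\frac{\tan|{\bf v}|}{|{\bf v}|}\,{\bf v},\,1\right),
\]
which is again analytic because $\tan r/r$ is an analytic even function. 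Then
\[
\varphi\circ\xi\circ\nu_{\widetilde f_\ell}=g\circ\nu_{\widetilde f_\ell}=N_{\widetilde f_\ell},
\]
and $\varphi$ is clearly independent of $\ell$. Its inverse is ${\bf Z}\mapsto(\arctan|{\bf Z}|/|{\bf Z}|)\,{\bf Z}$, also analytic, so $\varphi$ is a real-analytic diffeomorphism from the ball of radius $\pi/2$ onto all of $T_{{\bf e}_{n+1}}S^n$.

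The main obstacle is domain bookkeeping. The statement claims $\varphi$ is defined on all of $T_{{\bf e}_{n+1}}S^n$, but it is only needed on $\xi(V)$. I would handle this in one of two ways: either shrink $V$ to lie in $S^n_+$, or note that the radial reparametrization ${\bf v}\mapsto(\tan|{\bf v}|/|{\bf v}|)\,{\bf v}$ can be replaced on $\xi(V)$ by an analytic diffeomorphism of the whole tangent space that agrees with it there. The essential point to verify is that $\nu_{\widetilde f_\ell}(D({\bf 0},\varepsilon))\subset V$ for $\varepsilon$ small. This follows from $\nu_{\widetilde f_\ell}({\bf 0})={\bf e}_{n+1}$ (Sublemma \ref{sublemma_powerseries}) together with continuity. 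For uniformity in $\ell$, I would choose the ${\bf c}_\ell$ bounded, so that the second derivatives of $X_{n+1}\circ\widetilde f_\ell$ are uniformly bounded on a fixed disc.
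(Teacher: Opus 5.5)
Your proposal is correct and is the paper's argument made explicit: the paper justifies the sublemma only by remarking that a normal chart at ${\bf e}_{n+1}$ is a local inverse of $\exp_{{\bf e}_{n+1}}$, and your $\varphi=g\circ\exp_{{\bf e}_{n+1}}$, i.e.\ ${\bf v}\mapsto(\tan|{\bf v}|/|{\bf v}|)\,{\bf v}$, together with $N_{\widetilde f_\ell}=g\circ\nu_{\widetilde f_\ell}$, is precisely what that remark leaves implicit. Drop the second of your two domain fixes, though: by Fact \ref{identity_theorem}, a real-analytic map on all of $T_{{\bf e}_{n+1}}S^n$ that agrees with ${\bf v}\mapsto(\tan|{\bf v}|/|{\bf v}|)\,{\bf v}$ near ${\bf 0}$ must agree with it on the whole ball $|{\bf v}|<\pi/2$, so it would be unbounded on a bounded set, which is impossible; the same argument shows that the sublemma's literal claim of a global $\varphi$ fails as soon as $\nu_{\widetilde f_\ell}$ has a regular point, so the statement must be read as in your first fix, with $\varphi$ a real-analytic diffeomorphism from that ball onto $T_{{\bf e}_{n+1}}S^n$, which is all the later argument uses.
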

By Sublemma \ref{sublemma4.2}, it follows that 
$(4.B.2)$ is equivalent to the following $(4.B.3)$.   
\smallskip 
\begin{enumerate}
\item[$(4.B.3)$] For any $\ell\in \mathbb{N}$, any sufficiently small 
positive number $\varepsilon>0$ and any ${\bf x}_0\in Reg(F)$ satisfying 
$D\left({\bf x}_0, \varepsilon\right)\subset Reg(F)$, 
there exists a sufficiently small positive number $\delta$ 
$(0< \delta \le \epsilon)$ such that 
the set 
\[
Reg\left(N_{\widetilde{f}_\ell}|_{D\left({\bf 0}, \delta\right)}
: D\left({\bf 0}, \delta\right)\to T_{{\bf e}_{n+1}}S^n\right)
\] 
is 
dense in $D\left({\bf 0}, \delta\right)$.  
\end{enumerate}
\smallskip 
\par 
\smallskip 
Recall that $X_{n+1}\circ f_\ell$ is a quadratic perturbation of 
$X_{n+1}\circ f$ having the following form and also that $h$ is merely 
a parallel transformation of $\mathbb{R}^n$.    
\[
X_{n+1}\circ f_\ell ({\bf x}) = 
X_{n+1}\circ f({\bf x}) + 
\frac{1}{2}\left({\bf c}_\ell\cdot F^2({\bf x})\right).   
\]
Hence the second derivative of $X_{n+1}\circ f_\ell$ with respect to 
$\left(\widetilde{y}_1, \ldots, \widetilde{y}_n\right)$ has the following form.   
\[
\frac{\partial^2 \left(X_{n+1}\circ \widetilde{f}_\ell\right)}
{\partial \widetilde{y}_i\partial \widetilde{y}_j}(\widetilde{\bf y}) = 
\left\{
\begin{array}{lc}
\frac{\partial^2 \left(X_{n+1}\circ f\right)}
{\partial \widetilde{x}_i^2}(\widetilde{\bf x}) + c_{(\ell, i)} 
& (\mbox{if }i=j) \\ 
\frac{\partial^2 \left(X_{n+1}\circ f\right)}
{\partial \widetilde{x}_i\partial \widetilde{x}_j}(\widetilde{\bf x})
& (\mbox{if }i\ne j).   
\end{array}
\right.
\leqno{(4.5)}
\]
Denote by $\Psi_{\widetilde{f}_\ell, {\bf 0}}\left({\bf c}_\ell\right)$ 
the Jacobian determinant of 
$N_{\widetilde{f}_\ell}|_{D\left({\bf 0}, \delta\right)}$ \ch{in $S^n$} at ${\bf 0}$.    
By (4.5), if we regard $c_{(\ell, i)}$ $(1\le i\le n)$ as variables, 
then the Jacobian determinant of 
$N_{\widetilde{f}_\ell}|_{D\left({\bf 0}, \delta\right)}$ at ${\bf 0}$ 
is a monic polynomial 
with degree $n$.    
Thus, by Fact \ref{identity_theorem}, we have the following sublemma.   
\begin{sublemma}\label{sublemma4.3}
For any $\ell\in \mathbb{N}$ and any ${\bf x}_0\in Reg(F)$, 
the following subset of 
$\mathbb{R}^n$ is of Lebesgue measure zero \ch{and closed}.   
\[
Ker\left(\Psi_{\widetilde{f}_\ell, {\bf 0}}\right) = 
\left\{\left.
{\bf c}_{\ell}\in \mathbb{R}^n\, \right|\, 
\Psi_{\widetilde{f}_\ell, {\bf 0}}\left({\bf c}_\ell\right)=0
\right\}.   
\]
\end{sublemma}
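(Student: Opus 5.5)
The plan is to compute $\Psi_{\widetilde{f}_\ell, {\bf 0}}$ explicitly as a polynomial in ${\bf c}_\ell=(c_{(\ell,1)},\ldots,c_{(\ell,n)})$, show that it is monic of degree $n$ and hence not identically zero, and then apply the standard fact that the zero set of a nonzero polynomial on $\mathbb{R}^n$ is a closed null set.

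\textbf{Computing the Jacobian.} The map $N_{\widetilde{f}_\ell}$ takes values in $\mathbb{R}^n\times\{1\}$, and its first $n$ components are $-\partial (X_{n+1}\circ \widetilde{f}_\ell)/\partial \widetilde{y}_i$. Hence its Jacobian matrix at $\widetilde{\bf y}={\bf 0}$ is minus the Hessian of $X_{n+1}\circ\widetilde{f}_\ell$ at ${\bf 0}$.

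By (4.5), this Hessian equals $A+\mathrm{diag}(c_{(\ell,1)},\ldots,c_{(\ell,n)})$. Here $A$ is the Hessian of $X_{n+1}\circ f$ in the coordinates $\widetilde{\bf x}$ at $\widetilde{\bf x}({\bf x}_0)$. The matrix $A$ does not depend on ${\bf c}_\ell$, because $H_\ell$ changes $X_{n+1}\circ f_\ell$ only by an affine function of $\widetilde{\bf y}$, and $h$ is a translation. Therefore
\[
\Psi_{\widetilde{f}_\ell,{\bf 0}}({\bf c}_\ell)=(-1)^n\det\bigl(A+\mathrm{diag}(c_{(\ell,1)},\ldots,c_{(\ell,n)})\bigr).
\]
Expanding the determinant, the only term of degree $n$ in the variables is $c_{(\ell,1)}\cdots c_{(\ell,n)}$, with coefficient $1$. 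Every other term has lower degree or omits at least one variable. So up to sign, $\Psi_{\widetilde{f}_\ell,{\bf 0}}$ is a monic polynomial of degree $n$, and in particular it is not the zero polynomial.

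\textbf{Zero set is closed and null.} Closedness follows at once, since $\Psi_{\widetilde{f}_\ell,{\bf 0}}$ is continuous and the set in question is the preimage of $\{0\}$. For measure zero, the cleanest route is induction on $n$ using Fubini:
\begin{itemize}
\item Write $\Psi=\sum_k p_k(c_1,\ldots,c_{n-1})\,c_n^k$, where the leading coefficient $p_1=c_1\cdots c_{n-1}+(\text{lower order})$ is a nonzero polynomial.
\item For $(c_1,\ldots,c_{n-1})$ outside the zero set of $p_1$, which is null by induction, the one-variable polynomial in $c_n$ is nonzero and so has finitely many roots.
\item By Fubini, the zero set of $\Psi$ therefore has measure zero.
\end{itemize}
One could instead argue through Fact \ref{identity_theorem}: the zero set of a nonzero analytic function has empty interior. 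Empty interior alone does not give measure zero, however, so the Fubini argument is the one to write down.

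\textbf{Main obstacle.} The delicate point is justifying that (4.5) really isolates the ${\bf c}_\ell$-dependence as a pure diagonal shift at ${\bf 0}$. Two things must be checked:
\begin{itemize}
\item The term $\frac12{\bf c}_\ell\cdot F^2$ is exactly $\frac12\sum_i c_{(\ell,i)}\widetilde{y}_i^2$ in the $\widetilde{\bf y}$-coordinates, so its Hessian is $\mathrm{diag}(c_{(\ell,i)})$.
\item The affine correction in $H_\ell$, which involves only first derivatives of $X_{n+1}\circ f$ and not of $f_\ell$, contributes nothing to second derivatives.
\end{itemize}
Once both are verified, the monic-polynomial claim, and with it the sublemma, follows as described.
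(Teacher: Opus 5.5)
Your proposal is correct and follows the paper's route: via (4.5), the Jacobian of $N_{\widetilde{f}_\ell}$ at ${\bf 0}$ is minus $A+\mathrm{diag}(c_{(\ell,1)},\ldots,c_{(\ell,n)})$, so $\Psi_{\widetilde{f}_\ell,{\bf 0}}$ is, up to the sign $(-1)^n$, a monic polynomial of degree $n$ in ${\bf c}_\ell$ and hence not identically zero. The paper simply invokes Fact \ref{identity_theorem} for the measure-zero conclusion, which by itself only gives empty interior; your Fubini induction is what makes that last step rigorous.
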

By 
Sublemma \ref{sublemma4.3}, 
for any $\ell\in \mathbb{N}$, any ${\bf x}_0$ contained in $Reg(F)$ 
and almost all ${\bf c}_{\ell}\in \mathbb{R}^n$ in the sense of 
Lebesgue measure, 
$(4.B.3)$ is satisfied.   
Moreover, again by Sublemma \ref{sublemma4.3}, for any $\ell\in \mathbb{N}$ 
and almost all ${\bf c}_{\ell}\in \mathbb{R}^n$ (in the sense of 
Lebesgue measure) satisfying    
\[
\sum_{i=1}^n c_{(\ell, i)}\ch{^2} < \frac{1}{\ell^2},     
\leqno{(4.6)}
\] 
${\bf c}_{\ell}\in \mathbb{R}^n$ does not belong to 
$Ker\left(\Psi_{\widetilde{f}_\ell, {\bf 0}}\right)$.   
The inequality $(4.6)$ implies 
that we can construct a sequence of quadratic perturbations
$\left\{f_\ell: U_n\to \mathbb{R}^{n+1}\right\}_{\ell\in \mathbb{N}}$ 
satisfying even $(4.C)$ as well.    
\par 
\smallskip 
Suppose that $U_n$ is not connected.    
Then, for each connected component of $U_n$, the proof in the case that 
$U_n$ is connected works well.    
Since $\mathbb{R}^n$ satisfies the axiom of second countability, 
the number of connected components of $U_n$ is countable,    
Since the union of countably many 
Lebesgue measure zero subsets 
in $\mathbb{R}^n$ 
is a subset of Lebesgue measure zero, 
we may again construct a sequence of 
quadratic perturbations 
$\left\{f_\ell: U_n\to \mathbb{R}^{n+1}\right\}_{\ell\in \mathbb{N}}$ 
satisfying $(4.A)$, $(4.B)$ and $(4.C)$.     
This completes the proof.   
}
\end{proof}
\section{Proof of Theorem 3}\label{section5}
Take any $f$ of $A\left(U_n, \mathbb{R}^{n+1}\right)$ and fix it.   
By the assertion (1) of Theorem \ref{theorem2}, there exists a 
sequence $\left\{f_i\right\}_{i=1, 2, \ldots}
\subset PRF\left(U_n, \mathbb{R}^{n+1}\right)$ satisfying 
$\lim_{i\to \infty}f_i=f$.     Take any point ${\color{black}\bf x}$ of $U_n$ and fix it.   
Since $f_i\in PRF\left(U_n, \mathbb{R}^{n+1}\right)$ for any 
$i$ $(i=1, 2, \ldots)$, 
by the assertion (1) of Theorem \ref{theorem2}, 
there must exist a sequence of points 
$\left\{\color{black}{\bf x}_j\right\}_{j=1, 2, \ldots}\subset U_n$ satisfying 
$\lim_{j\to \infty}{\color{black}\bf x}_j=x$, 
$\left\{{\color{black}\bf x}_j\right\}_{j=1, 2, \ldots}\subset Reg\left(f_i\right)$ for any 
$i$ $(i=1, 2, \ldots)$ and $f_i({\color{black}\bf x})$ is completely recovered 
from the sequence of Legendre data 
$\left\{\left\{\nu_i\left({\bf x}_j\right), a_i\left({\bf x}_j\right)
\right\}_{{\color{black}\bf x}_j\in Reg\left(\nu_i\right)}\right\}_{i=1, 2, \ldots}$ 
of $f_i$.    
On the other hand, $f({\color{black}\bf x})
=\lim_{i\to \infty}f_i({\color{black}\bf x})$.    Therefore, 
$f({\color{black}\bf x})$ is  completely recovered 
from the sequence of Legendre data 
$\left\{\left\{\nu_i\left({\color{black}\bf x}_j\right), 
a_i\left({\color{black}\bf x}_j\right)  
\right\}_{{\color{black}\bf x}_j\in Reg\left(\nu_i\right)}\right\}_{i=1, 2, \ldots}$ 
of $f_i$.      
This completes the proof.   
\hfill $\Box$
}
\section{Examples}\label{examples}

\begin{example}[Constant mapping]
	Let ${\bf X}_0 \in \R^{n+1}$ 
	{be a fixed point}, and consider the constant mapping 
	$f\colon \R^n \to \R^{n+1}$ given by $f({\bf x})
	={\bf X_0}$.
	Then $f$ is {a frontal but not a regular frontal}.

	Now consider the sequence of concentric spheres $f_k\colon \R^n \to \R^{n+1}$ given by
		\[f_k({\bf x})=\ch{{\bf X}_0}+\frac{1}{k}\left(\cos x_1 \cdots \cos x_n, \cos x_1 \cdots \cos x_{n-1} \sin x_n,  \cos x_1 \cdots \cos x_{n-2} \sin x_{n-1} \dots, \sin x_1\right),\]
	We see that $\Reg(f_{k})=\R^n$ and $f_k(\R^n)$ is the sphere of centre 
	$\ch{{\bf X}_0}$ and radius $1/k$, 
	hence its Gauss map $\nu_{k}\colon \R^n \to S^n$ is given by
		\[\nu({\bf x})=k(f_k({\bf x})-{\bf X}_0).\]
	Therefore, $\Reg(\nu_{k})=\R^n$ and $f_k$ is a regular frontal for any $k \in \mathbb{N}$.
	We also have that
		\[a_k({\bf x})=
		f_k({\bf x})\cdot \nu({\bf x})
		=\ch{{\bf X}_0}\cdot \nu({\bf x})+\frac{1}{k}.\]
	
	It is clear by the definition of $\nu_{k}$ that 
	$\theta_j(x)=x_j$ for $j=1,\dots,n$, meaning that
		\[b_j=b_j\frac{\partial \theta_j}{\partial x_j}=b_1 \frac{\partial \theta_1}{\partial x_j}+\cdots+b_n \frac{\partial \theta_n}{\partial x_j}=\frac{\partial a_k}{\partial x_j}=x_0\cdot \frac{\partial \nu}{\partial x_j}=x_0\cdot \mu_j,\]
	and from here we obtain
		\[a_k\nu+b_1\mu_1+\cdots+b_n\mu_n=(x_0\cdot \nu)\nu+(x_0\cdot \mu_1) \mu_1+\cdots+(x_0\cdot \mu_n) \mu_n+\frac{1}{k}\nu=x_0+\frac{1}{k}[k(f_k-x_0)]=f_k.\]
\end{example}

\begin{example}[Cuspidal cross-cap]
Let $f\colon \mb{R}^2 \to \mb{R}^3$ be the folded Whitney umbrella,
	\[f(x,y)=(x,y^2,xy^3).\]
This is a regular frontal with $\Reg(f)=\Reg(\nu)=\{(x,y)\in \mb{R}^2: y \neq 0\}$.
The Legendre data associated to $f$ is
\begin{align*}
	\nu(x,y)=\frac{(-2 y^3, -3xy, 2)}{\sqrt{9 x^2 y^2+4 y^6+4}}; && a(x,y)=-\frac{3 x y^3}{\sqrt{9 x^2 y^2+4 y^6+4}}.
\end{align*}
Writing $\nu(x,y)=\nu^2(\theta_1(x,y),\theta_2(x,y))$ gives us functions
\begin{align*}
	\sin \theta_1(x,y)=&\frac{2}{\sqrt{9 x^2 y^2+4 y^6+4}}; 	&\cos \theta_1(x,y)=&\sqrt{\frac{9 x^2 y^2+4 y^6}{9 x^2 y^2+4 y^6+4}};\\
	\sin \theta_2(x,y)=&-\frac{3 x y}{\sqrt{9 x^2 y^2+4 y^6}}; 	&\cos \theta_2(x,y)=&-\frac{2 y^3}{\sqrt{9 x^2 y^2+4 y^6}}.
\end{align*}
Applying the chain rule, we see that
\begin{align*}
	\frac{\partial \theta_1}{\partial x}=\frac{1}{\cos \theta_1}\frac{\partial \sin \theta_1}{\partial x}&=-\frac{18 x y^2}{\sqrt{9 x^2 y^2+4 y^6} \left(9 x^2 y^2+4 y^6+4\right)};
	&
	\frac{\partial \theta_2}{\partial x}=\frac{1}{\cos \theta_2}\frac{\partial \sin \theta_2}{\partial x}&=\frac{6 y^2}{9 x^2+4 y^4};
	\\
	\frac{\partial \theta_1}{\partial y}=\frac{1}{\cos \theta_1}\frac{\partial \sin \theta_1}{\partial y}&=-\frac{6 \left(3 x^2 y+4 y^5\right)}{\sqrt{9 x^2 y^2+4 y^6} \left(9 x^2 y^2+4 y^6+4\right)};&
	\frac{\partial \theta_2}{\partial y}=\frac{1}{\cos \theta_2}\frac{\partial \sin \theta_2}{\partial y}&=-\frac{12 x y}{9 x^2+4 y^4}.
\end{align*}
Since $f$ is a regular frontal, the system of equations $da=b_1\,d\theta_1+b_2\,d\theta_2$ has a unique solution,
\begin{align*}
	b_1(x,y)=\frac{x y^2 \left(9 x^2 y^2+4 y^6+10\right)}{\sqrt{9 x^2+4 y^4} \sqrt{9 x^2 y^2+4 y^6+4}}; && b_2(x,y)=\frac{3 x^2 y-2 y^5}{\sqrt{9 x^2 y^2+4 y^6+4}}
\end{align*}
The vector fields $\mu_1, \mu_2\colon \mb{R}^2 \to S^2$ completing the orthonormal basis $\{\nu(x,y),\mu_1(x,y),\mu_2(x,y)\}$ for $T_{f_n(x,y)}\mb{R}^3$ are given by
\begin{align*}
	\mu_1(x,y)=-\frac{(4y^3,6 x y, 9 x^2y^2+4y^6)}{\sqrt{9 x^2 y^2+4 y^6} \sqrt{9 x^2 y^2+4 y^6+4}};
		&&
	\mu_2(x,y)=\left(\frac{3 x y}{\sqrt{9 x^2 y^2+4 y^6}},-\frac{2 y^3}{\sqrt{9 x^2 y^2+4 y^6}},0\right)
\end{align*} 
Then we have that
	\[a(x,y)\nu(x,y)+b_1(x,y)\mu_1(x,y)+b_2(x,y)\mu_2(x,y)=(x,y^2,xy^3)=f(x,y)\]
and $f$ can be recovered from the Legendre data $(\nu,a)$.
\end{example}

\begin{example}[Cross-cap]
Let $f\colon \mb{R}^2 \to \mb{R}^3$ be the Whitney umbrella,
	\[f(x,y)=(x,y^2,xy).\]
This is a pseudo regular frontal with $\Reg(f)=\Reg(\nu)=\mb{R}^2\backslash\{(0,0)\}$, but it is not a frontal.
The Legendre data associated to $f$ is
\begin{align*}
	\nu(x,y)=\frac{(-2 y^2,-x, 2 y)}{\sqrt{x^2+4 \left(y^4+y^2\right)}}; && a(x,y)=-\frac{x y^2}{\sqrt{x^2+4 \left(y^4+y^2\right)}}.
\end{align*}
Writing $\nu(x,y)=\nu^2(\theta_1(x,y),\theta_2(x,y))$ gives us functions
\begin{align*}
	\sin \theta_1(x,y)=&\frac{2 y}{\sqrt{x^2+4 \left(y^4+y^2\right)}}; 	&\cos \theta_1(x,y)=&\sqrt{\frac{x^2+4 y^4}{x^2+4 y^4+4 y^2}};\\
	\sin \theta_2(x,y)=&\frac{x}{\sqrt{x^2+4 y^4}}; 				&\cos \theta_2(x,y)=&-\frac{2 y^2}{\sqrt{x^2+4 y^4}}.
\end{align*}
Applying the chain rule, we see that
\begin{align*}
	\frac{\partial \theta_1}{\partial x}=\frac{1}{\cos \theta_1}\frac{\partial \sin \theta_1}{\partial x}&=-\frac{2 x y}{\sqrt{x^2+4 y^4} \left(x^2+4 \left(y^4+y^2\right)\right)};
	&
	\frac{\partial \theta_2}{\partial x}=\frac{1}{\cos \theta_2}\frac{\partial \sin \theta_2}{\partial x}&=\frac{2 y^2}{\sqrt{x^2+4 y^4} \sqrt{x^2+4 \left(y^4+y^2\right)}};
	\\
	\frac{\partial \theta_1}{\partial y}=\frac{1}{\cos \theta_1}\frac{\partial \sin \theta_1}{\partial y}&=\frac{2 \left(x^2-4 y^4\right)}{\sqrt{x^2+4 y^4} \left(x^2+4 \left(y^4+y^2\right)\right)};&
	\frac{\partial \theta_2}{\partial y}=\frac{1}{\cos \theta_2}\frac{\partial \sin \theta_2}{\partial y}&=-\frac{4 x y \left(x^2+2 y^2 \left(2 y^2+2\right)\right)}{\sqrt{x^2+4 y^4} \left(x^2+4 \left(y^4+y^2\right)\right)^{3/2}}.
\end{align*}
Since $f$ is a pseudo regular frontal, the system of equations $da=b_1\,d\theta_1+b_2\,d\theta_2$ has a unique solution,
\begin{align*}
	b_1(x,y)=\frac{x y \left(x^2+4 y^4+6 y^2\right)}{\sqrt{x^2+4 y^4} \sqrt{x^2+4 y^4+4 y^2}}; && b_2(x,y)=\frac{x^2-2 y^4}{\sqrt{x^2+4 y^4}}.
\end{align*}
The vector fields $\mu_1, \mu_2\colon \mb{R}^2 \to S^2$ completing the orthonormal basis $\{\nu(x,y),\mu_1(x,y),\mu_2(x,y)\}$ for $T_{f_n(x,y)}\mb{R}^3$ are given by
\begin{align*}
	\mu_1(x,y)=-\frac{(4y^3, 2 x y, x^2+4y^4)}{\sqrt{x^2+4 y^4} \sqrt{x^2+4 \left(y^4+y^2\right)}};
		&&
	\mu_2(x,y)=\left(\frac{x}{\sqrt{x^2+4 y^4}},-\frac{2 y^2}{\sqrt{x^2+4 y^4}},0\right)
\end{align*} 
Then we have that
	\[a(x,y)\nu(x,y)+b_1(x,y)\mu_1(x,y)+b_2(x,y)\mu_2(x,y)=(x,y^2,xy)=f(x,y)\]
and $f$ can be recovered from the Legendre data $(\nu,a)$.
\end{example}

\begin{example}[Cuspidal edge]
Let $f\colon \mb{R}^2 \to \mb{R}^3$ be the cuspidal edge, given by $f(x,y)=(x,y^2,y^3)$.
{The mapping $f$ is a frontal, but it is not a regular frontal; nonetheless, }$f$ is the pointwise limit of the sequence of mappings $f_n\colon \mb{R}^2 \to \mb{R}^3$ given by
	\[f_n(x,y)=\left(x,\frac{x^2}{n}+y^2,y^3\right),\]
which are regular frontals with $\Reg(f_n)=\Reg(\nu_n)=\{(x,y) \in \mb{R}^2: y\neq 0\}$.
Moreover, we can transform $f_n$ into $f$ for any $n \in \mb{N}$ by applying a suitable change of coordinates in the source and target.

Let $n \in \mb{N}$: the Legendre data associated to $f_n$ is
\begin{align*}
	\nu_n(x,y)=\frac{(6 x y,-3 n y, 2n)}{\sqrt{9 n^2 y^2+4 n^2+36 x^2 y^2}}; && a_n(x,y)=\frac{y \left(3 x^2-n y^2\right)}{\sqrt{9 n^2 y^2+4 n^2+36 x^2 y^2}}.
\end{align*}
Writing $\nu_n(x,y)=\nu^2(\theta_1(x,y),\theta_2(x,y))$ gives us functions
\begin{align*}
	\sin \theta_1(x,y)=&\frac{2 n}{\sqrt{9 n^2 y^2+4 n^2+36 x^2 y^2}}; 	&\cos \theta_1(x,y)=&3 y \sqrt{\frac{n^2+4 x^2}{9 n^2 y^2+4 n^2+36 x^2 y^2}};\\
	\sin \theta_2(x,y)=&-\frac{n}{\sqrt{n^2+4 x^2}}; 					&\cos \theta_2(x,y)=&\frac{2 x}{\sqrt{n^2+4 x^2}}.
\end{align*}
{Note that when $n$ goes to infinity, $\sin \theta_2(x,y)$ goes to $1$ and $\cos\theta_2(x,y)$ goes to $0$ for any $(x,y) \in \Reg(f_n)$, meaning that $\theta_2$ must be a constant function and the differential equation
	\[da=b_1\,d\theta_1+b_2\,d\theta_2\]
admits infinite solutions $(b_1,b_2)$.}

Applying the chain rule, we see that
\begin{align*}
	\frac{\partial \theta_1}{\partial x}=\frac{1}{\cos \theta_1}\frac{\partial \sin \theta_1}{\partial x}&=-\frac{24 n x y}{\sqrt{n^2+4 x^2} \left(9 n^2 y^2+4 n^2+36 x^2 y^2\right)};
	&
	\frac{\partial \theta_2}{\partial x}=\frac{1}{\cos \theta_2}\frac{\partial \sin \theta_2}{\partial x}&=\frac{2 n}{n^2 + 4 x^2};
	\\
	\frac{\partial \theta_1}{\partial y}=\frac{1}{\cos \theta_1}\frac{\partial \sin \theta_1}{\partial y}&=-\frac{6 n \sqrt{n^2+4 x^2}}{n^2 \left(9 y^2+4\right)+36 x^2 y^2};&
	\frac{\partial \theta_2}{\partial y}=\frac{1}{\cos \theta_2}\frac{\partial \sin \theta_2}{\partial y}&=0.
\end{align*}
Since $f_n$ is a regular frontal, the system of equations $da=b_1\,d\theta_1+b_2\,d\theta_2$ has a unique solution,
\begin{align*}
	b_1(x,y)=\frac{3 n^2 y^4+2 n^2 y^2-2 n x^2+12 x^2 y^4}{\sqrt{n^2+4 x^2} \sqrt{9 n^2 y^2+4 n^2+36 x^2 y^2}}; && b_2(x,y)=\frac{3 x y \left(n^2+2 n y^2+2 x^2\right)}{n \sqrt{9 n^2 y^2+4 n^2+36 x^2 y^2}}
\end{align*}
The vector fields $\mu_1, \mu_2\colon \mb{R}^2 \to S^2$ completing the orthonormal basis $\{\nu_n(x,y),\mu_1(x,y),\mu_2(x,y)\}$ for $T_{f_n(x,y)}\mb{R}^3$ are given by
\begin{align*}
	\mu_1(x,y)=-\frac{(-4 n x, 2n^2, 3y(n^2+4x^2)}{\sqrt{n^2+4 x^2} \sqrt{9 n^2 y^2+4 n^2+36 x^2 y^2}};
		&&
	\mu_2(x,y)=\left(\frac{n}{\sqrt{n^2+4 x^2}},\frac{2 x}{\sqrt{n^2+4 x^2}},0\right)
\end{align*} 
Then we have that
	\[a_n(x,y)\nu_n(x,y)+b_1(x,y)\mu_1(x,y)+b_2(x,y)\mu_2(x,y)=\left(x,\frac{x^2}{n}+y^2,y^3\right)=f_n(x,y)\]
and $f$ can be recovered from the sequence of Legendre data $\{(\nu_n,a_n)\}_{n \in \mb{N}}$.
\end{example}

\begin{example}[Swallowtail]
Let $f\colon \mb{R}^2 \to \mb{R}^3$ be the swallowtail singularity, given by $f(x,y)=(x,2y^3+xy,3y^4+xy^2)$.
{The mapping $f$ is a frontal, but it is not a regular frontal; nonetheless, }$f$ is the pointwise limit of the sequence of mappings $f_n\colon \mb{R}^2 \to \mb{R}^3$ given by
	\[f_n(x,y)=\left(x,x y+2 y^3,\frac{x^2}{n}+x y^2+3 y^4\right),\]
which are regular frontals with $\Reg(\nu_n)=\Reg(f_n)=\{(x,y) \in \mb{R}^2: x\neq -6y^2\}$.
Moreover, we can transform $f_n$ into $f$ for any $n \in \mb{N}$ by applying a suitable change of coordinates in the source and target.

Let $n \in \mb{N}$: the Legendre data associated to $f_n$ is
\begin{align*}
	\nu_n(x,y)=\frac{(ny^2-2x,2ny, n)}{\sqrt{4 n^2 y^2+n^2+\left(2 x-n y^2\right)^2}}; && a_n(x,y)=-\frac{n y^4+x^2}{\sqrt{n^2 y^4+4 n^2 y^2+n^2-4 n x y^2+4 x^2}}.
\end{align*}
Writing $\nu_n(x,y)=\nu^2(\theta_1(x,y),\theta_2(x,y))$ gives us functions
\begin{align*}
	\sin \theta_1(x,y)=&\frac{n}{\sqrt{4 n^2 y^2+n^2+\left(2 x-n y^2\right)^2}}; 		&\cos \theta_1(x,y)=&\sqrt{\frac{n^2 y^4+4 n^2 y^2-4 n x y^2+4 x^2}{n^2 y^4+4 n^2 y^2+n^2-4 n x y^2+4 x^2}};\\
	\sin \theta_2(x,y)=&-\frac{2 n y}{\sqrt{n^2 y^4+4 n^2 y^2-4 n x y^2+4 x^2}}; 	&\cos \theta_2(x,y)=&\frac{n y^2-2 x}{\sqrt{n^2 y^4+4 n^2 y^2-4 n x y^2+4 x^2}}.
\end{align*}
Applying the chain rule, we see that
\begin{align*}
	\frac{\partial \theta_1}{\partial x}&=-\frac{2 n \left(2 x-n y^2\right)}{\sqrt{n^2 y^4+4 n^2 y^2-4 n x y^2+4 x^2} \left(n^2 y^4+4 n^2 y^2+n^2-4 n x y^2+4 x^2\right)};
	\\
	\frac{\partial \theta_2}{\partial x}&=-\frac{4 n y}{n^2 y^4+4 n^2 y^2-4 n x y^2+4 x^2};
	\\
	\frac{\partial \theta_1}{\partial y}&=\frac{2 n^2 y \left(-n y^2-2 n+2 x\right)}{\sqrt{n^2 y^4+4 n^2 y^2-4 n x y^2+4 x^2} \left(n^2 y^4+4 n^2 y^2+n^2-4 n x y^2+4 x^2\right)};\\
	\frac{\partial \theta_2}{\partial y}&=\frac{2 n \left(n y^2+2 x\right)}{n^2 y^4+4 n^2 y^2-4 n x y^2+4 x^2}.
\end{align*}
Since $f_n$ is a regular frontal, the system of equations $da=b_1\,d\theta_1+b_2\,d\theta_2$ has a unique solution,
\begin{align*}
	b_1(x,y)&=\frac{n^3 y^2 \left(x y^4+4 x y^2+x+3 y^6+12 y^4+4 y^2\right)+n^2 x \left(x \left(-3 y^4+4 y^2+2\right)-12 y^6\right)+12 n x^2 y^4+4 x^4}{n \sqrt{n^2 y^2 \left(y^2+4\right)-4 n x y^2+4 x^2} \sqrt{n^2 \left(y^4+4 y^2+1\right)-4 n x y^2+4 x^2}};\\
	b_2(x,y)&=\frac{y \left(n x y^2+2 n x+2 n y^4-2 x^2-4 x y^2\right)}{\sqrt{n^2 y^4+4 n^2 y^2+n^2-4 n x y^2+4 x^2}}
\end{align*}
The vector fields $\mu_1, \mu_2\colon \mb{R}^2 \to S^2$ completing the orthonormal basis $\{\nu_n(x,y),\mu_1(x,y),\mu_2(x,y)\}$ for $T_{f_n(x,y)}\mb{R}^3$ are given by
\begin{align*}
	\mu_1(x,y)&=\frac{(n(2x-ny^2),2n^2y,n^2 y^4+4 n^2 y^2-4 n x y^2+4 x^2)}{\sqrt{n^2 y^4+4 n^2 y^2-4 n x y^2+4 x^2} \sqrt{4 n^2 y^2+n^2+\left(2 x-n y^2\right)^2}};
		\\
	\mu_2(x,y)&=\frac{(2 n y,n y^2-2 x)}{\sqrt{n^2 y^4+4 n^2 y^2-4 n x y^2+4 x^2}}.
\end{align*} 
Then we have that
	\[a_n(x,y)\nu_n(x,y)+b_1(x,y)\mu_1(x,y)+b_2(x,y)\mu_2(x,y)=\left(x,x y+2 y^3,\frac{x^2}{n}+x y^2+3 y^4\right)=f_n(x,y)\]
and $f$ can be recovered from the sequence of Legendre data $\{(\nu_n,a_n)\}_{n \in \mb{N}}$.
\end{example}

\begin{figure}[ht]
	\includegraphics[width=.8\textwidth]{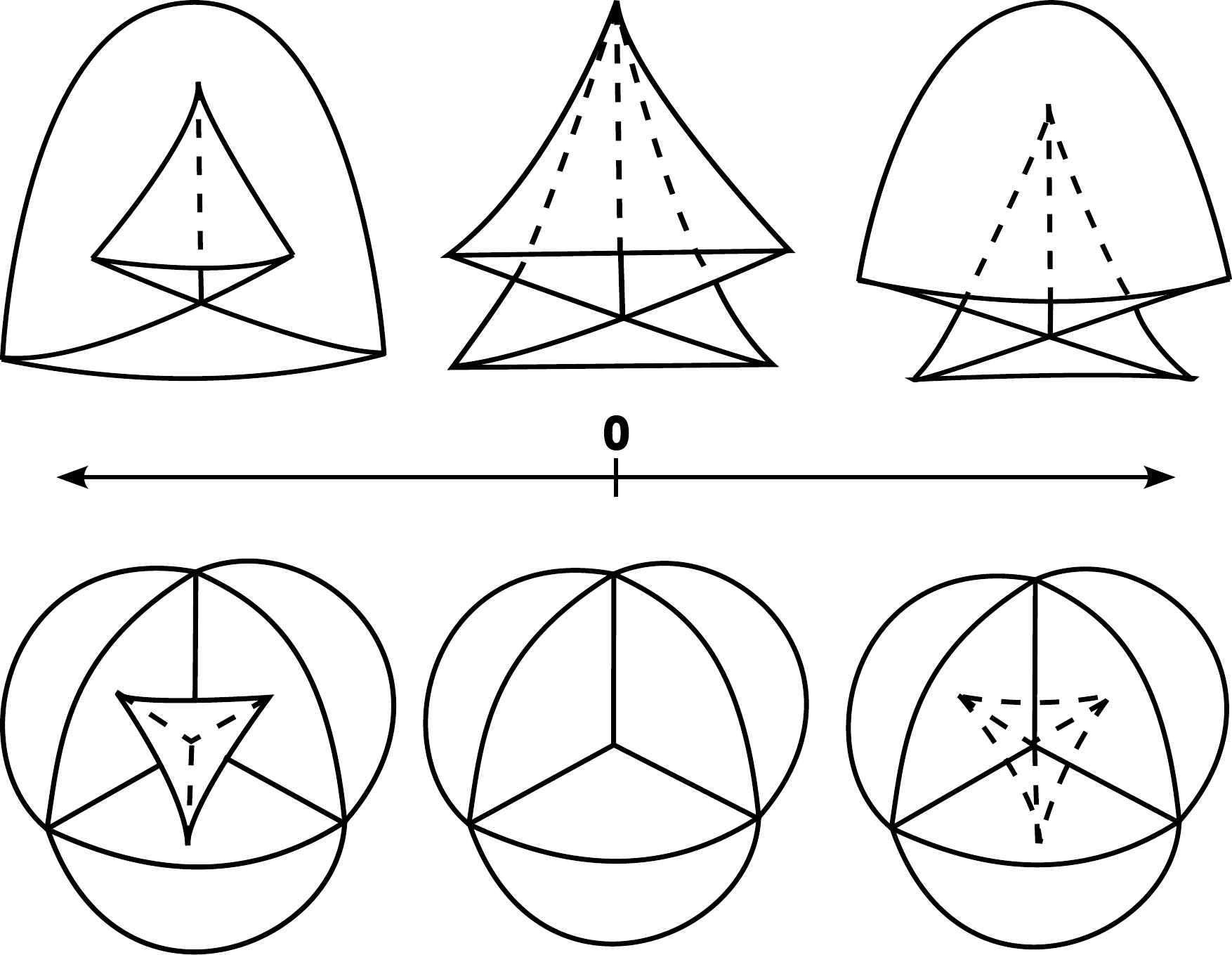}
	\caption{Image of the $D_4^+$ (top) and $D_4^-$ (bottom) singularities, projected into $\R^3$ via the map $(u,x,y,z) \mapsto (x,y,z)$ for different values of $u$. \label{D4 fig}}
\end{figure}

\begin{example}[$D_4$ singularity]
Let $f\colon \mb{R}^3 \to \mb{R}^4$ be the $D_4^+$ singularity, given by $f(u,v,w)=(u,v w,2 u v+3 v^2+w^2,u v^2+2 v^3+2 v w^2)$ (see Figure \ref{D4 fig}).
{The mapping $f$ is a frontal, but it is not a regular frontal; nonetheless, }$f$ is the pointwise limit of the sequence of mappings $f_n\colon \mb{R}^3 \to \mb{R}^4$ given by
	\[f_n(x,y)=\left(u,v w,2 u v+3 v^2+w^2,\frac{u^2}{n}+u v^2+2 v^3+2 v w^2\right),\]
which are regular frontals with $\Reg(\nu_n)=\Reg(f_n)=\{(u,v,w) \in \mb{R}^3: w^2\neq uv+3v^2\}$.
Moreover, we can transform $f_n$ into $f$ for any $n \in \mb{N}$ by applying a suitable change of coordinates in the source and target.

Let $n \in \mb{N}$: the Legendre data associated to $f_n$ is
\begin{align*}
	\nu_n(u,v,w)=\frac{(nv^2-2u,-2nw, -nv,n)}{\sqrt{n^2 v^2+4 n^2 w^2+n^2+\left(n v^2-2 u\right)^2}}.
\end{align*}
Writing $\nu_n(u,v,w)=\nu^3(\theta_1(u,v,w),\theta_2(u,v,w),\theta_3(u,v,w))$ gives us functions
\begin{align*}
	\sin \theta_1=	&\frac{n}{\sqrt{n^2 v^2+4 n^2 w^2+n^2+\left(n v^2-2 u\right)^2}}; &
	\cos \theta_1=	&\sqrt{\frac{n^2 v^4+n^2 v^2+4 n^2 w^2-4 n u v^2+4 u^2}{n^2 v^4+n^2 v^2+4 n^2 w^2+n^2-4 n u v^2+4 u^2}};\\
	\sin \theta_2=	&-\frac{n v}{\sqrt{n^2 v^4+n^2 v^2+4 n^2 w^2-4 n u v^2+4 u^2}};&
	\cos \theta_2=	&\frac{\sqrt{n^2 v^4+4 n^2 w^2-4 n u v^2+4 u^2}}{\sqrt{n^2 v^4+n^2 v^2+4 n^2 w^2-4 n u v^2+4 u^2}};\\
	\sin \theta_3=	&-\frac{2 n w}{\sqrt{n^2 v^4+4 n^2 w^2-4 n u v^2+4 u^2}};&
	\cos \theta_3=	&\frac{n v^2-2 u}{\sqrt{n^2 v^4+4 n^2 w^2-4 n u v^2+4 u^2}}.
\end{align*}
Applying the chain rule, we see that
\begin{align*}
	\frac{\partial \theta_1}{\partial u}&=-\frac{2 n \left(2 u-n v^2\right)}{\sqrt{n^2 v^4+n^2 v^2+4 n^2 w^2-4 n u v^2+4 u^2} \left(n^2 v^4+n^2 v^2+4 n^2 w^2+n^2-4 n u v^2+4 u^2\right)};
	\\
	\frac{\partial \theta_2}{\partial u}&=-\frac{2 n v \left(n v^2-2 u\right)}{\sqrt{n^2 v^4+4 n^2 w^2-4 n u v^2+4 u^2} \left(n^2 v^4+n^2 v^2+4 n^2 w^2-4 n u v^2+4 u^2\right)};
	\\
	\frac{\partial \theta_3}{\partial u}&=-\frac{4 n w}{n^2 \left(v^4+4 w^2\right)-4 n u v^2+4 u^2};
	\\
	\frac{\partial \theta_1}{\partial v}&=-\frac{n^2 v \left(2 n v^2+n-4 u\right)}{\sqrt{n^2 v^4+n^2 v^2+4 n^2 w^2-4 n u v^2+4 u^2} \left(n^2 v^4+n^2 v^2+4 n^2 w^2+n^2-4 n u v^2+4 u^2\right)};
	\\
	\frac{\partial \theta_2}{\partial v}&=-\frac{n \left(-n^2 v^4+4 n^2 w^2+4 u^2\right)}{\sqrt{n^2 v^4+4 n^2 w^2-4 n u v^2+4 u^2} \left(n^2 v^4+n^2 v^2+4 n^2 w^2-4 n u v^2+4 u^2\right)};
	\\
	\frac{\partial \theta_3}{\partial v}&=\frac{4 n^2 v w}{n^2 \left(v^4+4 w^2\right)-4 n u v^2+4 u^2};
	\\
	\frac{\partial \theta_1}{\partial w}&=-\frac{4 n^3 w}{\sqrt{n^2 v^4+n^2 v^2+4 n^2 w^2-4 n u v^2+4 u^2} \left(n^2 v^4+n^2 v^2+4 n^2 w^2+n^2-4 n u v^2+4 u^2\right)};
	\\
	\frac{\partial \theta_2}{\partial w}&=\frac{4 n^3 v w}{\sqrt{n^2 v^4+4 n^2 w^2-4 n u v^2+4 u^2} \left(n^2 v^4+n^2 v^2+4 n^2 w^2-4 n u v^2+4 u^2\right)};
	\\
	\frac{\partial \theta_3}{\partial w}&=-\frac{2 n \left(n v^2-2 u\right)}{n^2 \left(v^4+4 w^2\right)-4 n u v^2+4 u^2}.
\end{align*}
Since $f_n$ is a regular frontal, the system of equations $da=b_1\,d\theta_1+b_2\,d\theta_2+b_3\,d\theta_3$ has a unique solution given by
\begin{align*}
	b_1(u,v,w)=&\frac{n^3 v \left(u \left(v^5+v^3+4 v w^2+v\right)+2 v^6+2 v^4 \left(w^2+1\right)+v^2 \left(10 w^2+3\right)+w^2 \left(8 w^2+3\right)\right)}{n \sqrt{n^2 \left(v^4+v^2+4 w^2\right)-4 n u v^2+4 u^2} \sqrt{n^2 \left(v^4+v^2+4 w^2+1\right)-4 n u v^2+4 u^2}}+\\
		&+\frac{n^2 u \left(u \left(-3 v^4+v^2+4 w^2+2\right)-8 v^3 \left(v^2+w^2\right)\right)+8 n u^2 v \left(v^2+w^2\right)+4 u^4}{n \sqrt{n^2 \left(v^4+v^2+4 w^2\right)-4 n u v^2+4 u^2} \sqrt{n^2 \left(v^4+v^2+4 w^2+1\right)-4 n u v^2+4 u^2}};\\
	b_2(u,v,w)=&\frac{n^2 \left(u \left(2 v^5+v^3+8 v w^2\right)+3 v^6+v^4 w^2+10 v^2 w^2+4 w^4\right)}{\sqrt{n^2 \left(v^4+4 w^2\right)-4 n u v^2+4 u^2} \sqrt{n^2 \left(v^4+v^2+4 w^2+1\right)-4 n u v^2+4 u^2}}+\\
			&+\frac{-2 n u v \left(4 u v^2+u+6 v^3+2 v w^2\right)+4 u^2 \left(2 u v+3 v^2+w^2\right)}{\sqrt{n^2 \left(v^4+4 w^2\right)-4 n u v^2+4 u^2} \sqrt{n^2 \left(v^4+v^2+4 w^2+1\right)-4 n u v^2+4 u^2}};\\
	b_3(u,v,w)=&\frac{w \left(n \left(2 u+v^3\right)-2 u v\right)}{\sqrt{n^2 \left(v^4+v^2+4 w^2+1\right)-4 n u v^2+4 u^2}}.
\end{align*}
The vector fields $\mu_1, \mu_2,\mu_3\colon \mb{R}^3 \to S^3$ completing the orthonormal basis $\{\nu_n,\mu_1,\mu_2,\mu_3\}$ for $T_{f_n(u,v,w)}\mb{R}^4$ are given by
\begin{align*}
	\mu_1(u,v,w)&=\frac{(n(nv^2-2u),2n^2w,n^2v,n^2 v^4+n^2 v^2+4 n^2 w^2-4 n u v^2+4 u^2)}{\sqrt{n^2 v^4+n^2 v^2+4 n^2 w^2-4 n u v^2+4 u^2} \sqrt{n^2 v^2+4 n^2 w^2+n^2+\left(n v^2-2 u\right)^2}};
		\\
	\mu_2(u,v,w)&=\frac{(n v (n v^2-2 u),2n^2 vw,n^2 v^4+4 n^2 w^2-4 n u v^2+4 u^2,0)}{\sqrt{n^2 v^4+4 n^2 w^2-4 n u v^2+4 u^2} \sqrt{n^2 v^4+n^2 v^2+4 n^2 w^2-4 n u v^2+4 u^2}};
\\
	\mu_3(u,v,w)&=\left(\frac{2 n w}{\sqrt{n^2 v^4+4 n^2 w^2-4 n u v^2+4 u^2}},\frac{n v^2-2 u}{\sqrt{n^2 v^4+4 n^2 w^2-4 n u v^2+4 u^2}},0,0\right).
\end{align*}
Then we have that
\begin{align*}
	&a_n(x,y)\nu_n(x,y)+b_1(x,y)\mu_1(x,y)+b_2(x,y)\mu_2(x,y)+b_3(x,y)\mu_3(x,y)
	\\
	&=\left(u,v w,2 u v+3 v^2+w^2,\frac{u^2}{n}+u v^2+2 v^3+2 v w^2\right)=f_n(x,y)
\end{align*}
and $f$ can be recovered from the sequence of Legendre data $\{(\nu_n,a_n)\}_{n \in \mb{N}}$.
\end{example}

\vspace{5cm}
\section*{Acknowledgements}
This work was completed during the visit 
of the second author to Universitat de Val\`{e}ncia.
He would like to thank Universitat de Val\`{e}ncia for their kind hospitality.    
{\color{black}The authors would like to thank Armando Pérez 
at Universitat de Val\`{e}ncia for his kind explanation on  
quantum entanglements. }     
\par 
This work was supported 
by the Research Institute for Mathematical Sciences, 
a Joint Usage/Research Center located in Kyoto University. 
\par 
The first and third authors were partially supported by Grant
PID2021-124577NB-I00 funded by MCIN/AEI/ 10.13039/501100011033 
and by “ERDF A way of making 
Europe”.
The second author was partially 
supported by JSPS KAKENHI (Grant No. 23K03109).

%
%
%
%

\end{document}